\newcommand{\R}{\mathbb{R}} 
\newcommand{\e}{\varepsilon}
\newcommand{\N}{\mathbb N}
\newcommand{\supp}{\operatorname{supp}}
\newcommand{\dist}{\operatorname{dist}}
\newcommand{\ds}{\delta_\star}
\newcommand{\sgn}{\operatorname{sgn}}
\newtheorem{theorem}{Theorem}[section]
\newtheorem*{theorem*}{Theorem}
\newtheorem{lemma}[theorem]{Lemma}
\newtheorem{corollary}[theorem]{Corollary}
\newtheorem{proposition}[theorem]{Proposition}
\newtheorem{claim}{Claim}
\newtheorem*{claim*}{Claim}
\theoremstyle{definition}
\newtheorem{definition}[theorem]{Definition}
\newtheorem{remark}{Remark}
\newtheorem*{remark*}{Remark}
\title{Multiplicity one and strictly stable Allen-Cahn minimal hypersurfaces}
\author{Marco A. M. Guaraco, Fernando C. Marques, Andre Neves}
\address{Andr\'e Neves - The University of Chicago}
\email{aneves@uchicago.edu}
\address{Fernando Cod\'a Marques - Princeton University}
\email{coda@math.princeton.edu}
\address{Marco Guaraco - Imperial College London}
\email{guaraco@imperial.ac.uk}
\begin{document}

\begin{abstract}
We show that strictly stable components of Allen-Cahn minimal hypersurfaces always occur with multiplicity one. We also establish the uniqueness of solutions converging to non-degenerate hypersurfaces with multiplicity one. Our results work in all dimensions and without variational assumptions on the Allen-Cahn solutions.

\end{abstract}

\maketitle
 

\section{Introduction}

Since Modica-Mortola (\cite{ModicaMortola1,ModicaMortola2}) and De Giorgi (\cite{DeGiorgi}), the mathematical community has been aware of the strong connections between minimal hypersurfaces and functions $u\in C^\infty(M)$ satisfying the semilinear elliptic equation
\begin{align}\label{AC}
\e^2\Delta u -W'(u)=0.
\end{align} Here $\e>0$, $(M^n,g)$ is a closed Riemannian manifold, $\Delta$ is the Laplace-Beltrami operator of $M$, $u\in  C^2(M)$ and $W(t)=(1-t^2)^2/4.$ 

 \cref{AC} is known as the (stationary) Allen-Cahn equation (see \cite{AllenCahn}) and its solutions are critical points of the energy \begin{align}\label{energy}
E_\e(u)=\int_M \e\frac{|\nabla u|^2}{2}+\frac{W(u)}{\e}.
\end{align} A special feature of solutions to \cref{AC} is that, under very general assumptions (see \cite{HutchinsonTonegawa, TonegawaWickramasekera, Guaraco}), their nodal set $\{u=0\}$ converges in the Hausdorff distance to a  minimal hypersurface $\Gamma$ as $\e\to 0$. When this happens we say that $\Gamma$ is a \textit{limit interface}. Moreover, the integrand of the energy converges, in the sense of measures, to a positive integer multiple of the area of $\Gamma$. We call this integer, which can be different on each component of $\Gamma$, the \textit{multiplicity of $\Gamma$ as a limit interface}. \textit{Interface foliation} refers to the situation in which the multiplicity is strictly greater than one in some components.   

\

In this work, we study the phenomenon of interface foliation for solutions of \cref{AC}.  Our interest in the subject comes from its connection with the multiplicity problem for min-max constructions of minimal hypersurfaces, which we briefly describe in the next subsection.

\ 

First, we prove the following \text{local} multiplicity one result for strictly stable minimal interfaces:

\begin{theorem}\label{zerolevelset} Let $U\subset M$ be an open set and $\Gamma\subset U$ a closed embedded minimal hypersurface which is the limit interface of a sequence of solutions to \cref{AC} on $U$. If $\Gamma$ is two-sided and strictly stable, then $\Gamma$ has multiplicity one as a limit interface.
\end{theorem}

 Since one-sided interfaces always occur with multiplicity at least two, lifting the sequence of solutions to their oriented double cover we obtain:

\begin{corollary}
There are no one-sided limit interfaces with strictly stable oriented double cover.  
\end{corollary}

We emphasize that, contrary to what is usually assumed, \cref{zerolevelset} does not impose restrictions on the Morse index of the solutions or the dimension of the ambient manifold. 

\cref{zerolevelset} is proven in \cref{strictlystable}. A key idea is the use of a sliding argument to show that the nodal set of $\{u_\e=0\}$ is contained in a tubular neighborhood of $\Gamma$ of height $O(\e)$. This implies that $\e$-blow-ups of $u_\e$ converge to an entire solution in $\R^n$ whose nodal set is in between two parallel hyperplanes. By the work of Farina (\cite{Farina}, see also \cref{entireclass}), this entire solution has to be the canonical one-dimensional model. This characterization then allows us to bound the energy of $u_\e$ by the area of $\Gamma$. We note that the construction of the subsolution barriers for the sliding argument is possible only because we are assuming $\Gamma$ is strictly stable. These subsolutions are analogue to constant mean curvature hypersurfaces foliating a tubular neighborhood of $\Gamma$ and whose mean curvature vector point towards $\Gamma$. This argument is contained in \cref{nbarriers}. 

\ 

We also obtain the following uniqueness result for multiplicity one solutions:

\begin{theorem}\label{B} 
Let $u_k$ be a sequence of solutions to \cref{AC} with $\e=\e_k\to 0$, whose nodal set $\{u_k=0\}$ converges in the Hausdorff distance to an embedded minimal hypersurface $\Gamma$. If $\Gamma$ is non-degenerate and the convergence is with multiplicity one, then for $k$ large enough, $u_k$ is the one-sheet solution adapted to $\Gamma$ constructed in \cite{Pacard}.  
\end{theorem}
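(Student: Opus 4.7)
The plan is to show that $u_k$ lies in the small weighted-norm ball around the transplanted heteroclinic in which Pacard's Lyapunov--Schmidt construction already gives both existence and local uniqueness; uniqueness then forces $u_k = u_k^P$. The first step is to upgrade the Hausdorff convergence of $\{u_k=0\}$ to smooth graphical convergence using the sheet separation and curvature estimates of Wang--Wei \cite{WangWei}, refined by Chodosh--Mantoulidis \cite{ChodoshMantoulidis}. With multiplicity one, these give, in Fermi coordinates $(y,t)\in \Gamma\times(-\delta,\delta)$ around $\Gamma$, a smooth nodal graph $t=\phi_k(y)$ with $\|\phi_k\|_{C^m(\Gamma)}\to 0$ for every $m$, and $u_k(y,t)$ is $C^m$-close (in the rescaled variable $t/\e_k$) to the transplanted heteroclinic $\mathbb{H}((t-\phi_k(y))/\e_k)$, with $|u_k\pm 1|$ decaying exponentially in $\dist(\cdot,\Gamma)/\e_k$ outside the tube.

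Recall next that Pacard's one-sheet solution takes the form $u_k^P=\mathbb{H}(\dist_\Gamma/\e_k)+\psi_k^P$, with $\psi_k^P$ of size $O(\e_k)$ in a weighted $\CCe$-type norm, constructed by Lyapunov--Schmidt reduction: $\psi_k^P$ is chosen orthogonal (fiberwise in $t$) to the approximate kernel $\{\mathbb{H}'(\dist_\Gamma/\e_k)\,\xi(y) : \xi \text{ on } \Gamma\}$, and the resulting reduced equation on $\Gamma$ is governed to leading order by the Jacobi operator $L_\Gamma$, which is invertible precisely because $\Gamma$ is non-degenerate. Combined with Step 1, the graphical closeness places $u_k$ inside the uniqueness ball of this construction.

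Setting $v_k := u_k-u_k^P$, subtraction of the Allen--Cahn equations yields
\[
\e_k^2\Delta v_k - W''(u_k^P)v_k = Q_k(v_k), \qquad Q_k(v):= W'(u_k^P+v)-W'(u_k^P)-W''(u_k^P)v,
\]
with $Q_k$ quadratic. Decomposing $v_k$ into components along and transverse to the approximate kernel, the transverse part satisfies a uniformly coercive fiberwise problem, while the kernel projection reduces, to leading order in $\e_k$, to a Jacobi-type equation $L_\Gamma \eta_k = O(\e_k)$ on $\Gamma$. Non-degeneracy ($\ker L_\Gamma=\{0\}$) forces $\eta_k\to 0$, and the remaining transverse equation is a contraction on a small ball whose only fixed point is $0$; hence $v_k \equiv 0$ for $k$ large.

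The main difficulty I foresee is controlling the \emph{translation ambiguity}: a priori $u_k$ could coincide with a small normal translate $u_k^P(\cdot+\e_k\eta_k)$ rather than with $u_k^P$ itself, and this freedom is invisible to pointwise or energy estimates alone. Handling it is exactly what the kernel projection accomplishes, and it is there that non-degeneracy is indispensable --- the same hypothesis that makes Pacard's construction single-valued also makes the comparison $u_k = u_k^P$ rigid. The remaining technical care is to match the $C^m$ graph convergence produced by the curvature estimates with the specific weighted Hölder norms underlying Pacard's Lyapunov--Schmidt setup, so that the fixed-point iteration closes without derivative loss.
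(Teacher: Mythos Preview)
Your overall strategy matches the paper's: upgrade Hausdorff to smooth graphical convergence via the Wang--Wei curvature estimates, decompose the error along and transverse to the approximate kernel, use non-degeneracy of $\Gamma$ on the Jacobi side and fiberwise coercivity on the transverse side, then invoke the local uniqueness built into Pacard's contraction.

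The gap is quantitative and sits at the sentence ``the graphical closeness places $u_k$ inside the uniqueness ball.'' The curvature estimates (Lemma~\ref{curvatureestimates}) only yield $\|f\|_{C^{2}}+\e^\alpha\|f\|_{C^{2,\alpha}}=O(\e)$ for the nodal graph, and hence (after blow-up) that the error relative to a transplanted heteroclinic is merely $o(1)$ in $C^{2,\alpha}_\e$. Pacard's fixed-point ball, however, demands $O(\e^2)$ in the weighted norm. The paper bridges this with a bootstrap that is the technical heart of Section~\ref{uniqueness}: one first chooses the graph $\xi$ so that $\phi=u-\omega_\xi$ is fiberwise orthogonal to $\omega'_\xi$ (Proposition~\ref{hapriori}); then, projecting the PDE onto and off the kernel and combining $L^2$ coercivity near $\Gamma$, exponential decay away from $\Gamma$, and Schauder theory, one arrives at the self-improving estimate
\[
\|\phi\|_{C^{2,\alpha}_\e(M)} = O\bigl(\e^2 + \|\phi\|_{C^{2,\alpha}_\e(M)}^2\bigr)
\]
(Corollary~\ref{coroltech}), which absorbs to $\|\phi\|=O(\e^2)$ since $\|\phi\|=o(1)$ is already known. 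Without this step your contraction argument is circular: you need $O(\e^2)$ smallness to run the contraction, but the contraction is what is meant to give smallness. A secondary difference is that you compare $u_k$ directly with Pacard's exact solution $u_k^P$, whereas the paper compares with the \emph{explicit} cut-off heteroclinic $\omega_\xi$ and invokes Pacard's uniqueness only at the very end; the latter keeps all source terms computable by hand, but either route works once the $O(\e^2)$ estimate is in place.
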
  

These one-sheet solutions were constructed by Pacard (\cite{Pacard}) via a contraction argument on a space of functions having specific asymptotics. Our strategy to prove \cref{B} is to show that every multiplicity one solution has the same asymptotics. This is done in \cref{uniqueness}. We work in Fermi coordinates over $\Gamma$, rather than with respect of $\{u=0\}$ as in \cite{WangWei,ChodoshMantoulidis}. These coordinates have the advantage of being the ones used in \cite{Pacard}. We then use the estimates of Wang-Wei (\cite{WangWei}), together with the non-degeneracy assumption, to improve the estimates on the error terms and match the asymptotics of Pacard (\cite{Pacard}).

\subsection{Multiplicity and min-max minimal hypersurfaces}

The min-max approach for variational problems, also known as \textit{mountain-pass}, was first devised by Birkhoff (\cite{Birkhoff}) in order to construct closed geodesics in Riemannian two-spheres. Almgren (\cite{Almgren2}) developed the theory of varifolds in part to generalize Birkhoff's ideas to higher dimensions. These efforts led to the work of J. Pitts (\cite{Pitts}), who exploited the regularity theory of Schoen-Simon-Yau (\cite{SchoenSimonYau}) to show that, for $3\leq n\leq 6$, the varifold obtained through a one parameter min-max, has support on an embedded minimal hypersurface. Shortly after, Schoen-Simon (\cite{SchoenSimon}) developed a regularity theory for stable varifolds in dimensions $n\geq 7$. As a consequence, they extended Pitts' result to all dimensions with the caveat that the minimal hypersurface is now embedded only outside of a set of dimension at most $n-8$. 

Recently, the last two authors proposed a program to extend Almgren-Pitts' ideas to higher parameter min-max families, with the goal of solving Yau's conjecture (\cite{Yau}) on existence of infinitely many immersed minimal hypersurfaces on arbitrary closed manifolds. In (\cite{MarquesNevesInfinitely}), they solved the conjecture for manifolds with positive Ricci curvature. By analogy with the min-max construction of the eigenvalues of the Laplacian, these methods led to the definition of a non-linear spectrum for the area functional. Together with Liokumovich (\cite{LiokumovichMarquesNeves}), they showed this non-linear spectrum satisfies a Weyl law. Later, in joint work with Irie (\cite{IrieMarquesNeves}), they obtained the density of minimal hypersurfaces for generic metrics, and in  joint work with Song (\cite{MarquesNevesSong}), they showed the existence of an equidistributed family of minimal hypersurfaces for generic metrics. Both results based on the Weyl Law, solved Yau's conjecture for generic metrics. Finally, the general case of Yau's conjecture was later solved by Song (\cite{Song}). In this elegant work, Song localized the methods from \cite{MarquesNevesInfinitely} to prove the existence of infinitely many minimal hypersurfaces on a domain bounded by stable hypersurfaces.

All this progress, led the last two authors to propose a Morse-theoretic description of the set of minimal hypersurfaces for generic metrics (see \cite{Marques,MarquesNevesMorse,MarquesNevesTop,Neves}). In this context, it was conjectured:

\vspace{9pt}
\noindent\textbf{Morse Index Conjecture.} \textit{For a generic metric g on $M^n$, $3 \leq n \leq 7$, there exists a sequence $\{\Sigma_k\}$ of smooth, embedded, two-sided, closed minimal hypersurfaces such that: $\operatorname{index}(\Sigma_k) = k$ and
$ C^{-1}k^{1/n} \leq \operatorname{area}(\Sigma _k)\leq C k^{1/n}$ for some $C >0$.}
\vspace{9pt}

The proposed program to prove this conjecture was based on three main components: the use of min-max constructions over multiparameter sweepouts to obtain existence results, the characterization of the Morse index of min-max minimal hypersurfaces under the multiplicity one assumption, and a proof of the following multiplicity one conjecture:

\vspace{9pt}
\noindent\textbf{Multiplicity One Conjecture.} \textit{For generic metrics on $M^{n}$, $3 \leq n\leq 7$, any component of a closed, minimal hypersurface obtained by min-max methods is two-sided and has multiplicity one.}
\vspace{9pt}

In a novel work, X. Zhou (\cite{Zhou}) used a regularization of the area functional (developed by him and Zhu in \cite{ZhouZhu}) to prove the Multiplicity One Conjecture. The characterization of the Morse index of min-max minimal hypersurfaces under the multiplicity one assumption was obtained by the last two authors in \cite{MarquesNevesOne}, completing the Morse-theoretic program they proposed for the area functional. Finally, Wang-Zhou (\cite{Zhou22}) have recently showed the existence of non-generic metrics on $S^n$, $3\leq n \leq 7$, for which a two parameter min-max construction is necessarily attained with multiplicity.

 \subsection{Min-max Allen-Cahn minimal hypersurfaces}
In his PhD thesis (\cite{Guaraco}), the first author proposed a different technical framework for the min-max construction of minimal hypersurfaces. The basic idea is to first use min-max methods to construct solutions of the Allen-Cahn equation having good Morse theoretical properties. Once one constructs solutions, the problem of the convergence of the nodal set towards a minimal hypersurface can be studied separately. In \cite{Guaraco}, the regularity of the limit set is derived from the assumption of bounded Morse index of the solutions, building on the stable case previously handled by Tonegawa-Wickramasekera in \cite{TonegawaWickramasekera}. In this way, the first author was able to obtain a new proof of Almgren-Pitts-Schoen-Simon's Theorem. Later, these ideas were developed further by the first author together with Gaspar (\cite{GasparGuaraco}). In \cite{GasparGuaraco}, an Allen-Cahn spectrum is defined which is analogous to the volume spectrum from  \cite{MarquesNevesInfinitely} and \cite{LiokumovichMarquesNeves}. It is also known that the index of the limit hypersurface is bounded by the index of the solutions. This was shown by Hiesmayr (\cite{Hiesmayr}) for two-sided limits, and by Gaspar (\cite{Gaspar}) in the general case. Later, together with Gaspar (\cite{GasparGuaracoWeyl}), the first author showed that after suitable modifications, the Allen-Cahn spectrum can replace the volume spectrum in the density (\cite{IrieMarquesNeves}) and equidistribution (\cite{MarquesNevesSong}) arguments. 

A stronger regularity theory for stable solutions was subsequently developed for dimension $n=3$ by Chodosh-Mantoulidis (\cite{ChodoshMantoulidis}). Their work is based on Ambrosio-Cabr\'e's characterization of entire stable solutions in $\R^3$ (\cite{AmbrosioCabre}), as well as on improvements of recent regularity estimates for the nodal set of solutions, obtained by Wang-Wei (\cite{WangWei}). Using these estimates they showed that, if multiplicity higher than one occurs, then the limit hypersurface admits a positive Jacobi vector field. In addition, in \cite{ChodoshMantoulidis} they showed the Morse index is lower semicontinuous for multiplicity one solutions. This is the Allen-Cahn analogue of the Morse Index Conjecture for $n=3$. When this regularity theory is applied to the min-max constructions of the first author and Gaspar (\cite{GasparGuaraco}), one obtains an Allen-Cahn analogue to the Multiplicity One Conjecture for $n=3$. 

Finally, we note that parallel ideas to \cite{Guaraco} and \cite{GasparGuaraco}, have been developed for dimension two by Mantoulidis (\cite{Mantoulidis}), and in the codimension two setting by Stern (\cite{Stern}) and Pigati-Stern (\cite{PigatiStern}). In \cite{PigatiStern} they prove the existence of codimension two integer rectifiable varifolds obtained as the limit interface of min-max critical points of a complex valued functional. As a result, they obtain a new proof of the existence of stationary integral $(n-2)$-varifolds in an arbitrary closed Riemannian manifold. Although this result was already known to Almgren (\cite{Almgren}), the proof presented in \cite{PigatiStern} is considerably less technically involved, opening possibilities for new developments in the field.

\
\subsection{Interface foliation on Allen-Cahn minimal hypersurfaces}

Existence of solutions with nodal set near certain minimal hypersurfaces, has been proven for the multiplicity one case in \cite{PacardRitore,Pacard} and more recently in \cite{CajuGaspar}.  Interface foliation was studied in \cite{DelPinoKowalczykWeiYang} for non-degenerate separating hypersurfaces $\Sigma$ satisfying the second order condition $|A|^2+\operatorname{Ric}(\nu,\nu)>0$, where $A$ and $\nu$ are the second fundamental form and normal vector of the hypersurface. This inequality implies that the Jacobi operator of $\Sigma$ is unstable. As mentioned in \cite{DelPinoKowalczykWei}, this construction works also when $\Sigma$ is non-separating, but in this case, the multiplicity of the interface must be even. 

In \cite{DelPinoKowalczykWeiYang}, it is shown that for any hypersurface satisfying the condition above and any $k\in\N$, there exists a sequence of $\e\to 0$ and solutions of the Allen-Cahn, whose nodal set is $k$ small graphs accumulating on $\Sigma$.

For convenience of the reader, we include a sketch of a different construction which works for some minimal hypersurfaces of $\mathbb{S}^n$, $\mathbb{R}\mathbb{P}^n$ and a torus. In these highly symmetric geometries, solutions exist for $\e$ sufficiently small, rather than for only a subsequence going to zero.

\subsection*{Example 1.} Let $\mathbb{S}^n=\{\|x\|=1: x\in \R^{n+1}\}$. Given $\tau\in(0,1)$, we  partition $\mathbb{S}^n$ into the sets $D_\tau^+=\mathbb{S}^n\cap \{x_{n+1}\geq \tau \}$, $A_\tau=\mathbb{S}^n\cap \{|x_{n+1}|\leq \tau \}$ and $D_\tau^-=\mathbb{S}^n\cap \{x_{n+1}\leq -\tau \}.$ Let $\Omega$ be any of these domains. We can minimize the energy $E_\e$ on $W^{1,2}_0(\Omega)$. This produces a solution with zero Dirichlet condition at $\partial\Omega$. By comparing with a positive function, we can see that there exists $\e_0>0$, such that for any $\e\in(0,\e_0)$, there are non-zero minimizers in $A_\tau$ iff $\tau$ is far from zero, and in $D_\tau^\pm$ iff $\tau$ is far from one. These minimizers must have a sign, otherwise, due to the symmetry of $W$, $|u|$ would also minimize, contradicting the maximum principle since $|u|\geq 0$ and $0$ is also a solution. By \cite{BrezisOswald} positive solutions are unique, so the positive minimizer must be rotationally symmetric. By continuity, for each $\e\in(0,\e_0)$, there is a $\tau_0$ such that the outer derivative of these solutions coincide on $\partial (D_\tau^-\cup D_\tau^+)=\partial A_\tau$. If we choose the positive minimizer for $A_{\tau_0}$ and the negative for $D_{\tau_0}^\pm$ we obtain a solution with nodal set equal to two parallels equidistant from the equator. This implies a lower bound on the energy (see \cite{GasparGuaraco}). For some subsequence of $\e\to 0$, the parallels either converge to the equator, or accumulate on two distant parallels. However, by the results of Hutchinson-Tonegawa (\cite{HutchinsonTonegawa}) the energy must accumulate both near the zero level set and near a stationary varifold. Since two parallels are not stationary, convergence towards the equator is the only possibility.

\subsection*{Example 2.} The solutions constructed above are also even with respect to the antipodal map $x\to-x$. In particular, they project well to $\R\mathbb{P}^n$ giving solutions accumulating with multiplicity one on a copy of $\R\mathbb{P}^{n-1}$. We notice that Example 1 can be reproduced on a bumpy, rotationally and antipodally symmetric ellipsoid. In this case, projecting into the quotient gives a non-orientable strictly stable Allen-Cahn minimal hypersurface with unstable double cover.

\subsection*{Example 3.} Let $\mathbb{T}^n=\mathbb{T}^{n-1}\times S^1$ be a rotationally symmetric torus having only two  minimal leaves: $T_1=\mathbb{T}^{n-1}\times \{a\}$, which is unstable, and $T_0=\mathbb{T}^{n-1}\times \{b\}$, which is strictly stable. \cref{zerolevelset} shows that $T_0$ is not an Allen-Cahn minimal hypersurface. Under enough symmetries, the construction from Example 1 can be adapted to this case in order to show that, for every $\e$ small enough, there is a solution with nodal set two normal graphs over $T_1$. Therefore, $T_1$ can be obtained as an Allen-Cahn minimal hypersurface with multiplicity two. If $T_1$ is  non-degenerate case this last statement also follows from \cite{DelPinoKowalczykWeiYang}. If the union $T_0\cup T_1$ is non-degenerate then it can also be obtained as a multiplicity one Allen-Cahn minimal hypersurface by \cite{PacardRitore}. We do not know whether $T_0\cup T_1$ could be obtained with $T_0$ having multiplicity 1 and $T_0$ having odd multiplicity strictly greater than 1.

\section{Organization}

 \cref{notation} summarizes most of the notation used along the paper. \cref{canonicalandcutoff} presents the fundamental estimates for the canonical solutions as well as its cutoffs.  \cref{Fermicoordinates}, discusses Fermi coordinates.  \cref{injectresults} collects the fundamental injectivity results for the linearized equation.  \cref{subandsuper} describes elementary properties of subsolutions to the Allen-Cahn equation. \cref{ellipticestimates} collects standard elliptic estimates suited to our notation.  \cref{entireclass} describes known characterizations of entire solutions.   \cref{strictlystable} contains the proof of the multiplicity one in \cref{zerolevelset}.  \cref{curvature} summarizes curvatures estimates for the multiplicity one case following \cite{Wang} and \cite{WangWei}. Finally, \cref{uniqueness} finishes the proof of \cref{zerolevelset} and \cref{B}.

\subsection*{Acknowledgements} We would like to thank the reviewers for their thoughtful questions and inputs towards
improving our exposition.

\section{Notation}\label{notation}

We use the big $O$ and little $o$ notation with respect to the variable $\e$. Let $\Omega$ be an open region of a Riemannian manifold $M$ and $\e_k \to 0$, $k\in\N$. From now on, we omit the reference to the index of the sequence and write $\e=\e_k$. In addition, when working with a sequence of functions $f_{\e_k}$, we also omit the reference to $\e$, setting $f=f_{\e_k}$.

\begin{definition}\label{Oo}
Given a sequence of functions $f :\Omega\to \R$, we say that 
 \begin{itemize}
\item $f=O(1)$, \ \ if $\limsup_{\e\to0} \|f\|_{L^\infty(\Omega)}<\infty$,
\item $f=o(1)$, \ \ if $\lim_{\e\to0} \|f\|_{L^\infty(\Omega)}=0$, 
\item $f=O(g)$, \ \ if $|f|=O(1)\times |g|$, and  \item $f=o(g)$, \ \ if $|f|=o(1)\times|g|$.
\end{itemize}
Additionally, we make the following convention: a function $f \in C^\infty(\Omega)$ is said to be of class $o(\e^\N)$ in $\Omega$, if all of its derivatives and integrals on the set $\Omega$, decay faster than polynomials on $\e$, i.e. 
\begin{itemize}
\item $f=o(\e^\N)$, \ \ if $\|\nabla^k f\|_{L^{p}(\Omega)}=o(\e^m),$ for any fixed values of $k,m \in \{0,1,2,\dots\}$ and $p \in [1,\infty].$
\end{itemize}
\end{definition}

\begin{definition}\label{cke}
Given an open region on Riemannian manifold $(\Omega,g)$, $k\in \N$, $\alpha\in[0,1)$ and $\e>0$, we denote by $C^{k,\alpha}_\e (\Omega)$ the H\"older space $C^{k,\alpha}(\Omega)$ endowed with the (rescaled) norm  of $C^{k,\alpha}(\Omega,\e^{-2}g)$. In other words, given $f\in C^{k,\alpha}(\Omega)$, we define $$\| f \|_{C^{k,\alpha}_\e(\Omega)}= |f|_{C^0(\Omega)} +  \e |\nabla f|_{C^0(\Omega)} + \cdots +\e^k |\nabla^k f|_{C^0(\Omega)}+ \e^{k+\alpha} [|\nabla^k f|]_{\alpha},$$ where $[f]_{\alpha}:=\sup_{x,y\in \Omega} \frac{| f(x)-f(y)|}{\operatorname{dist}_\Omega(x,y)^\alpha}.$
\end{definition}

We also use the following symbols:

\medskip

\begin{tabular}{lll}
$o(\e^\N)$ && (see \cref{Oo}).\\
$C^{k,\alpha}_\e$ && (see  \cref{cke}).\\
$W$ && denotes the canonical potential $W(x)=\frac{(1-x^2)^2}{4}$.\\
$Q(u)$ && is the Allen-Cahn operator $\e^2 \Delta u-W'(u)$.\\
$\psi$ && denotes the one dimensional solution (see \cref{onedim}) \\
$\sigma_0$ && is the energy constant $\int_{-1} ^{1}\sqrt{2W(s)}ds$.\\  
$\sigma_1$ && is the constant $\int_\R \psi'$.\\
$\sigma_2$ && is the constant $\int_\R (\psi')^2$.\\  
$B_R(p)$ && denotes the ball of radius $R$ centered at $p\in M$.\\
$\sgn$ && denotes the sign function $\sgn:\R \to \{-1,0,1\}$.\\
$\Gamma(f)$ && represents the normal graph of $f$ over a two-sided hypersurface $\Gamma$. \\
$\{|t|<h\}$ && denotes the tubular neighborhood of height $h$\\ && where $t$ is the height of the Fermi coordinates over a hypersurface. \\
$\omega$ && denotes a cutoff of the canonical solution\\
$\ell_0$ && denotes the 1-D linearized operator $\e^2 \partial_t^2 - W''(\psi(t/\e))$. \\
$\ell$ && denotes the 1-D approximate linearized operator $\e^2 \partial_t^2 - W''(\omega)$. \\
$L_0$ && denotes $\e^2 \Delta - W''(\psi(t/\e))$. \\
$L$ && denotes the approximated linearized operator $\e^2 \Delta^2 - W''(\omega)$. \\
 \end{tabular}

\section{Properties of the canonical solution}\label{canonicalandcutoff}

We denote by $\psi:\R\to(-1,1)$ the one dimensional canonical solution, i.e. the unique entire solution to 
\begin{align*}
\begin{cases}
\psi'' - W'(\psi)=0 \\
\psi(0)=0\\
\psi'>0,
\end{cases}
\end{align*} where $W(u)=(1-u^2)^2/4.$ It is well known that $\psi(t)=\tanh(t/\sqrt{2})$. By direct differentiation one verifies that $\psi-\sgn$ decays exponentially at infinity together with all of its derivatives. More precisely, there are constants $\sigma>0$ and $c_k$, for $k \in \N$, such that 
$|(\psi-\sgn)(t)| \leq c_0 e^{-\sigma |t|}$ and
$|(\partial^k \psi)(t)| \leq c_k e^{-\sigma |t|}.$

Often, we will work with the rescaled functions $\psi(t/\e)$ that satisfies 
\begin{align}\label{onedim}
\e^2 (\psi(t/\e))''-W'(\psi(t/\e))&=0
\end{align} and the estimates
\begin{align}\label{onedimest}\begin{split}
|\psi(t/\e)-\sgn(t)| &\leq c_0 e^{-\sigma |t|/\e}\\
|(\partial^k \psi)(t/\e)|  &\leq c_k e^{-\sigma |t|/\e}.
\end{split}\end{align}

\subsection{A cutoff of the canonical solution} In later sections, we will use $\psi$ as a model to construct approximate solutions adapted to small tubular neighborhoods of smooth hypersurfaces. For this purpose, we cutoff $\psi(t/\e)$ on small regions of order greater than $O(\e)$.

For every $\e>0$, we define the cutoff $\omega:\R\to\R$ as 
\begin{align}\label{cutoffdef}\begin{split}
\omega(t)&=\psi(t/\e)\chi+(1-\chi(t))\sgn(t)\\
&=\psi(t/\e) + (\psi(t/\e)-\sgn(t))(\chi(t)-1)
\end{split}\end{align}
where $\chi(t)=\rho(2 \e^{-\delta} |t| -1)$, with $\rho:\R \to \R$ a smooth bump function satisfying $\rho \geq0$, $\rho(t)=1$ for $t\leq0$ and $\rho(t)=0$ for $t\geq 1$, and $\delta\in(0,1)$. 

It follows that
$$\chi(t)=
\begin{cases}
1 & \text{ for } |t| \leq \e^{\delta}/2\\
0 & \text{ for } |t| \geq \e^{\delta}
\end{cases}$$ and all the derivatives of $\chi$ are supported on $\e^{\delta}/2\leq |t| \leq \e^{\delta}$ and bounded by rational functions of $\e$. Namely, for all $k\in\N$, we can assume that \begin{align}\label{chiderivatives}|\partial^k \chi|\leq c_k \e^{-k\delta} \text{ on  }  [\e^{\delta}/2,\e^{\delta}],\end{align} for some $c_k\in \R$, and $\partial^k \chi=0$, otherwise.

The following are simple consequences of direct differentiation of \cref{cutoffdef}, together with the estimates from \cref{onedimest} and \cref{chiderivatives}.

\begin{lemma}\label{cutoffestimates}

\

\begin{enumerate} 
\item $(\psi(t/\e)-\sgn(t))(\chi(t)-1)=o(\e^\N)$. \\
\item $\e^k \partial^k \omega(t)=(\partial^k \psi)(t/\e)+o(\e^\N)$, for $k\geq 0$. \\
\item $|\omega(t)-\sgn(t)|+|\e^k \partial^k\omega(t)|=O(e^{-|t|/\e})$.\\
\item $\bigg| \int_\R f ( \omega -\sgn) \bigg| +\e^k \bigg|  \int_\R f \partial^k \omega \bigg| =O(\e) \|f\|_{L^\infty(\R)} $. \\
\item  If $\|f\|_{L^{\infty}([-R\e^{\delta},R\e^{\delta}])}=o(1)$, for some $R>0$ and $\delta\in(0,1)$, then (4) improves to 
$\bigg| \int_\R f ( \omega -\sgn) \bigg| +\e^k \bigg|  \int_\R f \partial^k \omega \bigg| =  o(\e).$\\
\item If $f=|t|^p$, then (4) improves to $\int_\R |t|^p|\omega -\sgn| +\e^k \int_\R |t|^p |\partial^k \omega| =O(\e^{1+p})$,  for all $p,k\in \N$.\\
\item $ \int_\R \partial^k\omega \partial^m\omega =\begin{cases}c_{k,m} \cdot \e^{1-k-m} + o(\e^\N)& \text{if $k$ and $m$ have the same parity}\\ 0 & \text{otherwise}\end{cases}$ 
where $c_{k,m}\in \R$ are universal constants and for $k=m=1$, $\sigma_2=c_{1,1}>0$.\\
\end{enumerate}
\end{lemma}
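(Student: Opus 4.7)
The plan is to derive all seven items from the exponential decay bounds in \eqref{onedimest} and the support structure of $\chi$, working with the two equivalent forms $\omega-\sgn=\chi(\psi(t/\e)-\sgn)$ and $\omega=\psi(t/\e)+(\chi-1)(\psi(t/\e)-\sgn)$. The first shows $\omega-\sgn$ is supported in $\{|t|\le\e^\delta\}$; the second isolates the leading part $\psi(t/\e)$ from a correction that will absorb into $o(\e^\N)$ errors.

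For (1), I would note that $\chi-1$ vanishes on $\{|t|\le\e^\delta/2\}$, so on its support one has $|\psi(t/\e)-\sgn(t)|\le c_0 e^{-\sigma\e^{\delta-1}/2}$, which decays faster than every polynomial in $\e$ (since $\delta<1$). To upgrade to $o(\e^\N)$ for all derivatives and $L^p$ integrals, Leibniz-expand, bounding $\partial^j(\psi(t/\e)-\sgn)$ by $c_j\e^{-j}e^{-\sigma|t|/\e}$ using \eqref{onedimest} and $\partial^{k-j}(\chi-1)$ by $c_{k-j}\e^{-(k-j)\delta}$ using \eqref{chiderivatives}; on the support the exponential factor dominates every polynomial in $\e$. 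Item (2) is then immediate: differentiating $\omega=\psi(t/\e)+(\chi-1)(\psi(t/\e)-\sgn)$ $k$ times and multiplying by $\e^k$, the first summand contributes exactly $(\partial^k\psi)(t/\e)$ by the chain rule, and (1) absorbs the rest.

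Items (3)--(6) are integrations of (2) against \eqref{onedimest}. Statement (3) is the pointwise consequence of (2), reading the exponent as $-\sigma|t|/\e$. For (4), integrate (3) against $\|f\|_{L^\infty}$ and use $\int_\R e^{-\sigma|t|/\e}\,dt=O(\e)$. For (5), split $\R=[-A\e,A\e]\cup(\R\setminus[-A\e,A\e])$; inside the small window I use $\|f\|_{L^\infty([-A\e,A\e])}=o(1)$ together with the $O(\e)$ integral bound, and outside I gain an extra factor $e^{-\sigma A}$ times $O(\e)\|f\|_{L^\infty}$, whence letting $A=A(\e)\to\infty$ slowly enough that the hypothesis still holds on $[-A\e,A\e]$ forces the total to be $o(\e)$. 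Item (6) follows from the change of variables $s=t/\e$, turning $\int|t|^p e^{-\sigma|t|/\e}\,dt$ into $\e^{1+p}\int|s|^p e^{-\sigma|s|}\,ds=O(\e^{1+p})$.

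For (7), item (2) gives pointwise
\begin{equation*}
\e^{k+m}\partial^k\omega\,\partial^m\omega=(\partial^k\psi)(t/\e)(\partial^m\psi)(t/\e)+o(\e^\N),
\end{equation*}
where the cross terms stay $o(\e^\N)$ because the main factors are bounded by $e^{-\sigma|t|/\e}$ and the corrections are $o(\e^\N)$. Integrating and substituting $s=t/\e$ yields $\e^{1-k-m}c_{k,m}+o(\e^\N)$ with $c_{k,m}=\int_\R\partial^k\psi\cdot\partial^m\psi\,ds$. Since $\psi$ is odd, $\partial^k\psi$ has parity $(-1)^{k+1}$, so the integrand is odd precisely when $k+m$ is odd, forcing $c_{k,m}=0$ in that case; for $k=m=1$, $c_{1,1}=\int_\R(\psi')^2=\sigma_2>0$ by the definition of $\sigma_2$. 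The only real subtlety lies in (5), whose two-scale split must balance the vanishing of $f$ near the origin against the exponential tail far away; everything else is systematic bookkeeping driven by \eqref{onedimest} and the support properties of $\chi$.
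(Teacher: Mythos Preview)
Your argument is correct and follows exactly the route the paper indicates (direct differentiation of \eqref{cutoffdef} combined with \eqref{onedimest} and \eqref{chiderivatives}); the paper gives no further detail, and you have supplied it accurately.

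One small refinement in item (7): your parity argument, as written, only shows $c_{k,m}=0$ when $k+m$ is odd, hence $\int_\R\partial^k\omega\,\partial^m\omega=o(\e^\N)$ rather than the exact $0$ claimed in the statement. To get the exact vanishing, observe that $\omega$ itself is odd: $\chi(t)=\rho(2\e^{-\delta}|t|-1)$ is even, while $\psi$ and $\sgn$ are odd, so $\omega=\chi\psi(\cdot/\e)+(1-\chi)\sgn$ is odd. Then $\partial^k\omega\,\partial^m\omega$ is genuinely odd when $k+m$ is odd, and the integral vanishes identically. This is the only place where your write-up falls slightly short of the stated conclusion; everything else is in order.
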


\subsection{The linearized equation} We denote by $\ell_0$ the linearized Allen-Cahn operator, i.e. \begin{align*}
\ell_0(\varphi) =\e^2( \varphi )''- W''(\psi(t/\e))\varphi. 
\end{align*} 
Direct differentiation of \cref{onedim}, shows that $\psi'(t/\e)$ is a positive function on the kernel of $\ell_0$, i.e. $\ell_0(\psi'(t/\e))=0$. It is a well known fact, that $\operatorname{ker}\ell_0$ is simple and there is a spectral gap for functions in $(\operatorname{ker}\ell_0)^\perp$. The following result is proven in \cite{Pacard} (see equation (3.15)  after Lemma 3.6 in \cite{Pacard}).

\begin{lemma}\label{spectralgap} The function $\psi'(t/\e)$ generates $\operatorname{ker}\ell_0$. Moreover, there exists $\gamma>0$ such that $$ \gamma \int_\R \phi^2 \leq  -\int_\R \phi \ell_0 (\phi),$$ $\forall \phi\in(\operatorname{ker}\ell_0)^\perp$, i.e. $\int_\R \phi(t) \psi'(t/\e)=0$. \end{lemma}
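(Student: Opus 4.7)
The plan is to reduce the claim to a standard spectral-theoretic statement about a fixed Schrödinger-type operator on $\R$ by scaling away $\e$. Setting $\tilde{\phi}(s) := \phi(\e s)$, a change of variables gives $\ell_0(\phi)(t) = \mathcal{L}(\tilde{\phi})(t/\e)$, where $\mathcal{L} := \partial_s^2 - W''(\psi(s))$, and both $\int_\R \phi^2\,dt$ and $-\int_\R \phi\,\ell_0(\phi)\,dt$ scale by the same factor of $\e$. Hence it suffices to prove the inequality for $\mathcal{L}$ acting on $L^2(\R)$, with an $\e$-independent constant $\gamma$, and the orthogonality condition $\int \phi\,\psi'(t/\e)\,dt = 0$ transforms into $\tilde{\phi} \perp \psi'$ in $L^2(\R)$.

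Differentiating the profile equation $\psi'' = W'(\psi)$ yields $(\psi')'' = W''(\psi)\,\psi'$, so $\mathcal{L}(\psi') = 0$. The space of classical solutions of $\mathcal{L} u = 0$ is two-dimensional; a second independent solution, obtained by reduction of order, has the form $u(s) = \psi'(s)\int_0^s (\psi'(r))^{-2}\,dr$, and since $\psi'(s)$ decays like $e^{-\sigma|s|}$, the function $u$ grows like $e^{\sigma|s|}$ and is not in $L^2(\R)$. Therefore $\ker \mathcal{L} \cap L^2(\R) = \mathrm{span}(\psi')$, which is the first assertion of the lemma.

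For the spectral gap, observe that $V(s) := W''(\psi(s)) = 3\psi(s)^2 - 1$ converges exponentially to $W''(\pm 1) = 2$ as $s \to \pm\infty$. By Weyl's theorem on essential spectra, $\sigma_{\mathrm{ess}}(-\mathcal{L}) = [2,\infty)$. Since $\psi' > 0$ lies in $\ker \mathcal{L} \cap L^2$, the eigenvalue $0$ admits a strictly positive eigenfunction; by the Perron-Frobenius positivity principle for one-dimensional Schrödinger operators (the minimizer of the Rayleigh quotient can be taken nonnegative, and any eigenfunction orthogonal to a positive one must change sign), $0$ is the bottom of the spectrum of $-\mathcal{L}$ and is both simple and isolated below $[2,\infty)$. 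Denoting the resulting spectral gap by $\gamma > 0$, the spectral theorem gives
$$\gamma \int_\R \phi^2 \leq -\int_\R \phi\,\mathcal{L}(\phi)$$
for every $\phi \in (\ker \mathcal{L})^\perp \cap H^1(\R)$, and rescaling back produces the stated inequality for $\ell_0$ with the same constant.

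The main technical point is the identification of $\psi'$ as the ground state of $-\mathcal{L}$ together with the fact that $0$ is strictly separated from the rest of the spectrum; this combines the essential-spectrum computation with the positivity principle. Both ingredients are classical, but they are essential in order to extract a constant $\gamma$ that is independent of $\e$, which is what makes the lemma usable in the $\e \to 0$ analysis of later sections.
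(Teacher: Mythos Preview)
Your argument is correct. The paper does not actually prove this lemma; it simply cites Pacard (``see equation (3.15) after Lemma 3.6 in \cite{Pacard}'') and moves on. You have instead supplied a self-contained spectral-theoretic proof: scale out $\e$, identify $\psi'$ as the unique $L^2$ solution of $\mathcal{L}u=0$ via reduction of order, compute the essential spectrum using Weyl's theorem and the limit $W''(\psi(s))\to 2$, and invoke the positivity principle to conclude that $0$ is the simple ground-state eigenvalue, hence isolated in the spectrum. This is exactly the classical route one would expect to find in Pacard's paper or any standard reference, so in substance you are reproducing what the citation points to rather than offering a genuinely alternative strategy. The one point worth making explicit is that the discrete spectrum below $[2,\infty)$ cannot accumulate at $0$, so the ground state is automatically separated from the rest of the spectrum by some $\gamma>0$; you state this but the phrase ``isolated below $[2,\infty)$'' slightly undersells it --- what you need, and what follows, is isolation from the \emph{entire} remainder of $\sigma(-\mathcal{L})$.
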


The linearized equation also approximates well by means of the cut-off. \begin{definition}
We define the approximate one dimensional operator as $\ell(\varphi)=\e^2 \varphi''-W''(\omega)\varphi$.
\end{definition} Notice that $\omega(t)^m= \psi(t/\e)^m+ o(\e^\N).$
Since $W''(t)=3t^2-1$ is a polynomial, we obtain $
|\ell(\varphi)-\ell_0 (\varphi)|=|W''(\psi(\cdot/\e))-W''(\omega)||\varphi| =o(\e^\N)\|\varphi\|_{L^{\infty}}.$

\begin{lemma}\label{linearizedapprox}
Let $\varphi \in C^2(\R)$. Then, $$\frac{\gamma}{2} \int_{\R}\varphi^2 \leq -\int_\R \varphi \ell (\varphi) + O (\e) \bigg[\int_\R \varphi \omega' \bigg]^2+ o(\e^\N)\|\varphi\|^2_{L^\infty(\R)}.$$ \end{lemma}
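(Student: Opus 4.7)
The plan is to reduce Lemma \ref{linearizedapprox} to the spectral gap of $\ell_0$ (Lemma \ref{spectralgap}) via two controlled approximations: replacing $\ell$ by $\ell_0$, and replacing $\omega'$ by $\psi'(\cdot/\e)$. Assuming $\varphi$ decays at infinity so that all integrals converge (otherwise the inequality is vacuous), I would decompose
\[
\varphi = a\,\psi'(\cdot/\e) + \phi,\qquad \phi\in(\ker\ell_0)^\perp,\qquad a=\frac{\int_\R \varphi\,\psi'(\cdot/\e)}{\int_\R (\psi'(\cdot/\e))^2}.
\]
Formal self-adjointness of $\ell_0$ together with $\ell_0(\psi'(\cdot/\e))=0$ gives $\int\varphi\,\ell_0(\varphi)=\int\phi\,\ell_0(\phi)$, so Lemma \ref{spectralgap} yields $\int\phi^2\le -\gamma^{-1}\int\varphi\,\ell_0(\varphi)$. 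Combined with the orthogonality identity $\int\varphi^2 = a^2\int(\psi'(\cdot/\e))^2 + \int\phi^2$, the task reduces to bounding $a^2\int(\psi'(\cdot/\e))^2$ by $O(\e)\bigl(\int\varphi\,\omega'\bigr)^2 + o(\e^\N)\|\varphi\|_{L^\infty}^2$.

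For that bound, Lemma \ref{cutoffestimates}(2) gives $\e\omega'-\psi'(\cdot/\e)=o(\e^\N)$ in $L^\infty(\R)$, supported in $|t|\in[\e^\delta/2,\e^\delta]$, so its $L^1$ norm is also $o(\e^\N)$, and therefore
\[
\int\varphi\,\psi'(\cdot/\e) \;=\; \e\int\varphi\,\omega' + o(\e^\N)\|\varphi\|_{L^\infty}.
\]
Squaring, and using $\bigl|\int\varphi\,\omega'\bigr|=O(1)\|\varphi\|_{L^\infty}$ (Lemma \ref{cutoffestimates}(4) with $k=1$) to absorb the cross term, gives
\[
\Bigl(\int\varphi\,\psi'(\cdot/\e)\Bigr)^2 \;=\; \e^2\Bigl(\int\varphi\,\omega'\Bigr)^2 + o(\e^\N)\|\varphi\|_{L^\infty}^2.
\]
A direct substitution (or Lemma \ref{cutoffestimates}(7) with $k=m=1$) yields $\int(\psi'(\cdot/\e))^2=\sigma_2\,\e+o(\e^\N)$, and dividing produces the required estimate on $a^2\int(\psi'(\cdot/\e))^2$.

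The transition from $\ell_0$ to $\ell$ is handled by the remark preceding the lemma: the error $\int\varphi\,\ell_0(\varphi)-\int\varphi\,\ell(\varphi)=\int\varphi^2\bigl(W''(\omega)-W''(\psi(\cdot/\e))\bigr)$ involves a factor that vanishes identically on $|t|\le\e^\delta/2$, is $o(\e^\N)$ on $[\e^\delta/2,\e^\delta]$, and decays as $O(e^{-\sigma|t|/\e})$ for $|t|\ge\e^\delta$; hence its $L^1(\R)$ norm is $o(\e^\N)$, and the correction contributes at most $o(\e^\N)\|\varphi\|_{L^\infty}^2$. Assembling the three estimates and multiplying by $\gamma$ yields the claim with constant $\gamma$ in place of $\gamma/2$. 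The main technical point to monitor is the squaring step, where the bound $\bigl|\int\varphi\,\omega'\bigr|=O(1)\|\varphi\|_{L^\infty}$ (rather than $O(\e^{-1})$) is essential in order to absorb the cross term into the $o(\e^\N)$ error; this relies on the tight control of the support width and $L^\infty$ size of the cutoff-correction functions provided by parts (2) and (4) of Lemma \ref{cutoffestimates}, and is the only place where any nontrivial cancellation is invoked.
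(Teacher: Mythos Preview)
Your proof is correct and follows essentially the same route as the paper's: decompose $\varphi$ along $\psi'(\cdot/\e)$, apply the spectral gap of $\ell_0$ to the orthogonal component, then convert $\psi'(\cdot/\e)$ to $\e\omega'$ and $\ell_0$ to $\ell$ using the $o(\e^\N)$ approximations from Lemma~\ref{cutoffestimates}. You supply somewhat more detail than the paper does (in particular on the squaring step and the $L^1$ control of $\e\omega'-\psi'(\cdot/\e)$), and your observation that the argument actually yields the constant $\gamma$ rather than $\gamma/2$ is correct; the paper's proof gives the same improvement but the stated lemma keeps only the weaker bound.
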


\begin{proof}

Define $\varphi^\perp$ by the formula
\begin{align*}
\varphi(t)= \bigg(\frac{\int_\R \varphi(s) \psi'(s/\e) ds}{\int_\R \psi'(s/\e)^2 ds}\bigg)\psi'(t/\e) + \varphi^\perp(t)
\end{align*} and notice that $\varphi^\perp \in (\ker \ell_0)^\perp$.

Since $\ell_0(\psi'(t/\e))=0$ and $\ell_0$ is self-adjoint, 
 \begin{align*}
\gamma \int_\R (\varphi^\perp)^2 &\leq -\int_\R \varphi^\perp \ell_0 (\varphi^\perp)\\
&= -\int_\R \varphi \ell_0 (\varphi)\\
&= -\int_\R \varphi \ell (\varphi) + o(\e^\N)\|\varphi\|^2_{L^\infty(\R)}.
 \end{align*}
 
Finally, since $\int_\R \psi'(t/\e)^2=O(\e)$ we have,
\begin{align*}
\int_{\R} |\varphi-\varphi^\perp|^2 &=O(\e^{-1})\bigg(\int_\R \varphi(t) \psi'(t/\e) dt \bigg)^2\\
&=O(\e)\bigg(\int_\R \varphi [\omega' + o(\e^\N)] dt \bigg)^2.
 \end{align*}
The result follows from combining both estimates and the properties of $o(\e^\N)$ functions.
\end{proof}


\section{Fermi Coordinates}\label{Fermicoordinates}
Let $\Gamma\subset M$ be a \textit{two-sided} embedded hypersurface. We will often work using \textit{Fermi coordinates} over $\Gamma$, i.e. given a choice of normal frame $\partial$ on $\Gamma$, the coordinates are given by the diffeomorphism $\mathcal{F}:\Gamma \times (-\tau,\tau) \to M$, $$\mathcal{F}(x,t)=\operatorname{Exp}(x,t\partial(x)),$$ for some small $\tau>0$ fixed. \\

For the convenience of the reader, we summarize our notation and several well known facts about Fermi coordinates, in the list below.

\

\begin{itemize}
\item $\{|t|<s\}=\mathcal{F}(\Gamma\times(-s,s))$ denotes the tubular neighborhood of height $s \in (0,\tau)$.\\
\item From now on, given a function $G:\{|t|<s\}\to \R$ we will abuse notation and also denote $G\circ \mathcal{F}$ as $G$.\\
\item $G'(x,t)=(\partial_t G)(x,t)$ denotes the normal derivate of $G\in C^\infty(\{|t|<\tau\})$ at the point $\mathcal{F}(x,t)$.\\
\item $\Gamma(f)=\{\mathcal{F}(x,f(x)):x\in \Gamma\}$ denotes the normal graph of $f:\Gamma\to(-\tau,\tau)$.\\
\item $\nabla_t$ is the gradient operator of $\Gamma(t)$ with respect to the metric inherited from $M$. \\ 
\item $\Delta_t$ is the Laplace-Beltrami operator of $\Gamma(t)$ with respect to the metric inherited from $M$.\\
\item $H_t(x)=H(x,t)$ is the mean curvature of $\Gamma(t)$ at the point $\mathcal{F}(x,t)$ (in the direction of $\partial_t$). We abbreviate $H=H(\cdot,0)$, $H'_0=H'(\cdot,0)$ and $H''_0=H''(\cdot,0)$. \\
\item $J=\Delta_0 + H'_0$ is the Jacobi operator of the hypersurface $\Gamma$, i.e. the second derivative of the area element in the direction of $\partial_t$. \\
\item The ambient Laplace-Beltrami operator decomposes through the well-known formula $$\Delta_g=\Delta_t +\partial_t^2 - H_t\partial_t.$$
\item Given a coordinate system $\partial_i$ on $\Gamma$ we have $(\Delta_t v)(x) = a_{ij}(x,t)(\partial_{ij}v)(x,t)+b_{i}(x,t)(\partial_{i}v)(x,t)$ and $(\nabla_t v)(x) = c_{i}(x,t)(\partial_{i}v)(x,t)$, for $a_{ij},b_i$ and $c_i$, smooth functions on $\Gamma(-\tau,\tau)$.\\
\end{itemize}

We record now the following estimates 
\begin{lemma}\label{nnorms}
Let $G \in C^{0,\alpha}(\{|t|<\tau\})$, then we have the following:
\begin{enumerate}
\item $\|G(\cdot,t)\|_{C^{0,\alpha}_\e(\Gamma)}=O(\|G\|_{C_\e^{0,\alpha}(\{|t|<\tau\})}),$ for all $|t|\leq \tau$.\\
\item $\|G(x,t+\xi(x))\|_{C^{0,\alpha}_\e(\{|t|<\tau/2\} )}=O(\|G\|_{C_\e^{0,\alpha}(\{|t|<\tau\})}),$ for any $\|\xi\|_{C^{1}_\e(\Gamma)}=O(\e)$.\\
\item $\big\|\int_\R G(\cdot,t)g(t) dt\big\|_{C_\e^{0,\alpha}(\Gamma)}=O\big( \int_\R \|G(\cdot,t)\|_{C_\e^{0,\alpha}(\Gamma)}  |g(t)|dt\big),$ for any $g$ with $\supp g \subset [-\tau,\tau].$\\
\item $\big\|\frac{1}{t}(\e^2\Delta_t v-\e^2\Delta_0 v)\big \|_{C^{0,\alpha}_\e(\Omega)}=O(\|v\|_{C^{2,\alpha}_\e(\Omega)})$, for any $v \in C^{2,\alpha}(\{|t|<\tau\})$ and $\Omega\subset \{|t|<\tau\}$.\\
\item $\big\|\frac{1}{t}(\e^2|\nabla_t v|^2-\e^2|\nabla_0 v|^2)\big\|_{C^{0,\alpha}_\e(\Omega)}=O(\|v\|^2_{C^{1,\alpha}_\e(\Omega)})$, for any $v \in C^{2,\alpha}(\{|t|<\tau\})$ and $\Omega\subset \{|t|<\tau\}$.
\end{enumerate}
\end{lemma}

\begin{proof} (1), (3) and the formula $[G(x,t+\xi(x))]_{0,\alpha}=O([G]_{0,\alpha}(1+|\nabla \xi|^\alpha))$ (from which (2) follows) can be derived directly from the definitions of the H\"older norms. For (4), notice that in coordinates we have expressions of the form
 \begin{align*}
\e^2(\Delta_t-\Delta_0) v(x,t)&=\e^2 [A_{ij}(x,t) (\partial_{ij}v)(x,t)+B_{i}(x,t) (\partial_{i}v)(x,t)] \times t
\end{align*}
and
\begin{align*}
\e^2(|\nabla_{t}v(\cdot, t)|^2-|\nabla_{0}v(\cdot, t)|^2)&=\e^2 C_{ij}(x,t)\times \partial_i v (x,t) \times \partial_j v(x,t)\times t,
 \end{align*} where $A_{ij}(x,t)=\int_0^1 a'_{ij}(x,ts)ds$, $B_{ij}(x,t)=\int_0^1 b'_i(x,ts)ds$ and $C_{ij}(x,t)=\int_0^1 c'_i(x,ts)ds$ are depending only on the metric and $\Gamma$.
\end{proof}

\subsection{CMC near non-degenerate minimal hypersurfaces}\label{non-degenerate}
When the Jacobi operator of $\Gamma$ is invertible, a standard application of the Inverse Function Theorem gives the existence of positive constants $\tau=\tau(M,\Gamma)$ and $C=C(M,\Gamma,\tau)$, such that for all $H \in (-\tau, \tau)$:\\
\begin{itemize}
\item There is a unique hypersurface $\Gamma_H$, which is a normal graph over $\Gamma$ and has constant mean curvature equal to $H$.\\
\item The graph $\Gamma_H$ varies smoothly with respect to $H$.\\
\item The distance function $\operatorname{dist}(\cdot, \Gamma_H)$ is smooth on $\{|t|<\tau\}$.\\
\item The map $\mathcal{F}_0$ given in Fermi coordinates with respect to $\Gamma_H$ is a diffeomorphism to $N_H(\tau)=\mathcal{F}_0(\Gamma_H\times (-\tau,\tau)).$\\ 
\item $C^{-1}\|G\circ \mathcal{F}_0\|_{C^k(\Gamma_H\times (-\tau,\tau))}\leq \|G\|_{C^k(N_0(\tau))}\leq C \|G\circ \mathcal{F}_0\|_{C^k(\Gamma_H\times (-\tau,\tau))},$ for $k=1,2,3$.
\end{itemize}


\section{Injectivity results}\label{injectresults}

\subsection{The case of a cylinder}\label{cylinder}

Let $(\Gamma,h)$ be a closed $(n-1)$-dimensional Riemannian manifold. The Laplace-Beltrami operator of the cylinder $\Gamma\times\R=\{(x,t): x\in \Gamma, t\in \R\}$, endowed with the product metric, decomposes as $\Delta_{\Gamma\times\R}=\Delta_0 + \partial_t^2$. 

In this context, the function $\psi(t/\e)$, satisfies $$\e^2 \Delta_{\Gamma\times \R} (\psi(t/\e))-W'(\psi(t/\e))=0$$ and its linearized operator at $\psi(t/\e)$ is given by $$L_0=\e^2 \Delta_{\Gamma\times \R}  - W''(\psi(t/\e)).$$ By differentiating the equation for $\psi(t/\e)$ with respect to the normal direction, we see that $L_0(\psi'(t/\e))=0$, i.e. $\psi'(t/\e) \in \operatorname{Ker}(L_0)$. In fact, Lemma 3.7 from \cite{Pacard}, implies $\operatorname{Ker}(L_0)=\operatorname{span}\langle \psi'(t/\e) \rangle$. 

We say that a function $f\in C^{2,\alpha}(\Gamma\times\R)$ on the cylinder is orthogonal to the kernel of $L_0$ (or orthogonal to $\psi'(t/\e)$), if
\begin{align} \label{Ortho0}
\int_\R f(x,t)\psi'(t/\e) dt = 0, \ \ \text{$\forall x\in \Gamma$.}
\end{align} Direct computation shows that functions orthogonal to the kernel form an invariant subspace of $L_0$. The invertibility properties of $L_0$ on this subspace  are summarized in the following statement, which combines Propositions 3.1 and 3.2 from \cite{Pacard}.

\begin{proposition}\label{injectivitycylinder} Let $(\Gamma,h)$ be a closed $(n-1)$-dimensional Riemannian manifold. There are positive constants $C$ and $\e_0$, such that for all $\e\in(0,\e_0)$
\begin{enumerate} \item If $v\in C^{2,\alpha}_\e(\Gamma\times\R)$ satisfies \cref{Ortho0} then, $$\| v \|_{C^{2,\alpha}_\e(\Gamma\times\R)}\leq C \| L_0 v \|_{C^{0,\alpha}_\e(\Gamma\times\R)}.$$ 
\item If $f\in C^{0,\alpha}_\e(\Gamma\times \R)$ satisfies \cref{Ortho0}, then there exists a unique $v\in C^{2,\alpha}_\e(\Gamma\times\R)$ satisfying \cref{Ortho0} such that  $L_0 v =f$.\end{enumerate} 
\end{proposition}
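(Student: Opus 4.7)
My plan is to prove the injectivity estimate (1) first by a blow-up contradiction driven by the fibre-wise spectral gap of Lemma \ref{spectralgap}, and then deduce the existence statement (2) from (1) by a Lax-Milgram construction on truncated cylinders. The key structural observation is the splitting $L_0 = \e^2\Delta_\Gamma + \ell_0$, which reduces the problem on the cylinder to a one-dimensional problem in the normal direction $t$, fibre-wise over $\Gamma$.

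For (1), suppose the estimate fails: then there exist $v_k$ with $\|v_k\|_{C^{2,\alpha}_{\e_k}}=1$, $\|L_0 v_k\|_{C^{0,\alpha}_{\e_k}}\to 0$, and $v_k$ satisfying \eqref{Ortho0}. Pick points $p_k=(x_k,t_k)$ at which a fixed fraction of $\|v_k\|_{C^{2,\alpha}_{\e_k}}$ is attained, and rescale by $\e_k^{-1}$ around $p_k$. The $\e$-scaled Hölder norms become ordinary Hölder norms after this rescaling, so Arzelà-Ascoli produces a non-trivial $C^{2,\alpha}_{\mathrm{loc}}$-limit $v_\infty$ on $\R^n$. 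Two cases arise: (i) if $|t_k|/\e_k\to\infty$, then $W''(\psi(t_k/\e_k))\to 2$ locally, and $v_\infty$ is a bounded solution of $(\Delta-2)v_\infty=0$ on $\R^n$, which forces $v_\infty\equiv 0$ by a Liouville/maximum-principle argument; (ii) if $t_k/\e_k$ stays bounded, after translating in $\tau$ the limit satisfies $\Delta v_\infty - W''(\psi(\tau))v_\infty=0$ on $\R^{n-1}\times\R$ with $v_\infty(y,\cdot)\perp\psi'$ for every $y$, and the classification of bounded solutions of this equation -- via distributional Fourier transform in $y$, using that the 1D operator $\ell_0$ has non-positive spectrum with simple kernel spanned by $\psi'$ -- again forces $v_\infty\equiv 0$. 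Both cases contradict the non-triviality of $v_\infty$ at the origin.

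For (2), once injectivity is established, I would solve the problem on truncated cylinders $\Gamma\times[-T,T]$ with zero Dirichlet data via Lax-Milgram applied to the bilinear form $B(u,\phi) = \int \e^2\nabla u\cdot\nabla\phi + W''(\psi(t/\e))u\phi$ on the closed subspace of $H^1_0$ satisfying \eqref{Ortho0}; the fibre-wise coercivity supplied by Lemma \ref{spectralgap} (after checking that the orthogonality constraint is preserved by the natural fibre-wise projector) yields a unique weak solution $v_T$. The uniform a priori bound from (1), promoted to Hölder via standard Schauder estimates, allows passage to $T\to\infty$ along a subsequence to produce a solution $v$ on $\Gamma\times\R$; uniqueness is immediate from (1). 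The main obstacle I expect is a clean execution of the classification in case (ii) of the blow-up: justifying the Fourier argument for a merely bounded $v_\infty$ on $\R^{n-1}$ requires working in the space of tempered distributions, or, alternatively, a barrier argument showing that $v_\infty$ decays exponentially in $\tau$ (using that $L_0$ behaves like $\Delta-2$ for $|\tau|\gg 1$), after which a direct energy computation on each fibre combined with the spectral gap closes the argument.
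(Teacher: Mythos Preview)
The paper does not supply its own proof of this proposition: it simply records that the statement ``combines Propositions 3.1 and 3.2 from \cite{Pacard}'' and moves on. Your sketch is essentially the argument one finds in Pacard's notes, so in that sense you and the paper agree; you are just filling in what the paper outsources.

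Your outline for (1) is the standard blow-up/Liouville route and is sound. A few points worth tightening. First, when you ``pick $p_k$ where a fixed fraction of $\|v_k\|_{C^{2,\alpha}_{\e_k}}$ is attained'', remember that the H\"older seminorm is not pointwise; the clean way is to use Schauder (Theorem~\ref{Schauder}) to reduce to a point where $|v_k|$ is nearly maximal, since $\|L_0 v_k\|_{C^{0,\alpha}_\e}\to 0$. Second, in case (ii) you must pass the fibre-wise orthogonality \eqref{Ortho0} to the limit; this is where the exponential decay of $\psi'$ is used to justify dominated convergence, and you should center the blow-up at $t=0$ (not at $t_k$) so that the constraint transforms correctly. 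Third, your alternative for the classification in case (ii) is the robust one: first get exponential decay of $v_\infty$ in $\tau$ by a barrier (since $W''(\psi(\tau))\to 2$), then set $V(y)=\int_\R v_\infty(y,\tau)^2\,d\tau$ and compute $\Delta_y V \ge 2\gamma V$ from Lemma~\ref{spectralgap}; a bounded subsolution of this on $\R^{n-1}$ vanishes. This avoids the distributional Fourier argument entirely and is closer to what Pacard actually does.

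For (2) your scheme is fine, but be aware of one wrinkle on the truncation $\Gamma\times[-T,T]$: the constrained subspace is only \emph{approximately} $L_0$-invariant with Dirichlet data, because integration by parts in $\int_{-T}^{T}(\ell_0 u)\psi'(t/\e)\,dt$ leaves a boundary term $\e^2[u'\psi'(t/\e)]_{-T}^{T}$. This is $O(e^{-\sigma T/\e})$ and harmless once you send $T\to\infty$ with the uniform bound from (1), but it means the Lax--Milgram solution on the truncation solves $L_0 v_T = f$ only up to an exponentially small multiple of $\psi'(t/\e)$; you should say so explicitly. Alternatively, and somewhat more cleanly, you can run Lax--Milgram directly on the constrained subspace of $H^1(\Gamma\times\R)$ for compactly supported $f$ (coercivity comes straight from Lemma~\ref{spectralgap}), then use the a priori estimate (1) to extend to all $f\in C^{0,\alpha}_\e$ by approximation.
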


The following result, which is proved in Proposition 3.3 of \cite{Pacard}, regards the coercive operator that approximates $L_0=\e^2 \Delta -W''(\psi(t/\e))$ at infinity. In fact, notice that $W''(\psi(t))\to 2$ as $t\to\infty$.

\begin{proposition}\label{coercive}
For any $\e>0$, the operator $L_\infty=\e^2\Delta_g - 2$ is an isomorphism $L_\infty : C^{2,\alpha}(M) \to C^{0,\alpha}(M)$ with inverse bounded with respect to the rescaled norms, i.e. $$\| f \|_{C^{2,\alpha}_\e(M)}=O(\| L_\infty f \|_{C^{0,\alpha}_\e(M)}). $$ 
\end{proposition}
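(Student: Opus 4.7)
The plan is to prove injectivity (together with the rescaled estimate) via the maximum principle combined with Schauder estimates in the rescaled metric, and then to obtain surjectivity from the Lax--Milgram theorem applied to the coercive bilinear form $B(u,v)=\int_M(\e^2\nabla u\cdot\nabla v+2uv)\,dV_g$ on $H^1(M)$.

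For the $L^\infty$ bound, at a point where $f$ attains its maximum, $\e^2\Delta_g f\leq 0$, so $L_\infty f=\e^2\Delta_g f-2f\leq -2\max f$, yielding $\max f\leq\tfrac12\|L_\infty f\|_{L^\infty(M)}$. A symmetric argument at the minimum gives
$$\|f\|_{L^\infty(M)}\leq\tfrac12\|L_\infty f\|_{L^\infty(M)}\leq\tfrac12\|L_\infty f\|_{C^{0,\alpha}_\e(M)},$$
which proves simultaneously that $\ker L_\infty=\{0\}$ and the $C^0$ portion of the desired estimate.

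To upgrade this to a full $C^{2,\alpha}_\e$ estimate, I would reinterpret $L_\infty$ in the rescaled metric $\tilde g=\e^{-2}g$. Because the Levi--Civita connection is invariant under constant conformal rescaling while a $k$-covariant tensor acquires a factor $\e^k$ in its $\tilde g$-norm, the rescaled norm $C^{k,\alpha}_\e(M,g)$ coincides exactly with $C^{k,\alpha}(M,\tilde g)$, and $L_\infty=\Delta_{\tilde g}-2$. In $\tilde g$ the sectional curvatures are of order $\e^2$ and the injectivity radius is of order $\e^{-1}$, so on every $\tilde g$-ball of radius $1$ the metric tensor is uniformly close to Euclidean with all covariant derivatives controlled independently of $\e$. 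Standard interior Schauder estimates for the uniformly elliptic operator $\Delta_{\tilde g}-2$ then yield
$$\|f\|_{C^{2,\alpha}(B^{\tilde g}_1(p))}\leq C\bigl(\|f\|_{L^\infty(B^{\tilde g}_2(p))}+\|L_\infty f\|_{C^{0,\alpha}(B^{\tilde g}_2(p))}\bigr)$$
with $C$ independent of both $\e$ and $p\in M$. Covering $M$ by such balls and combining with the $L^\infty$ bound above gives the required $\|f\|_{C^{2,\alpha}_\e(M)}=O(\|L_\infty f\|_{C^{0,\alpha}_\e(M)})$.

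For surjectivity, the form $B$ is continuous and coercive on $H^1(M)$ (for each fixed $\e>0$), so for any $h\in C^{0,\alpha}(M)\subset L^2(M,g)$ Lax--Milgram produces a unique weak solution $f\in H^1(M)$ of $-L_\infty f=h$; elliptic bootstrap places $f$ in $C^{2,\alpha}(M)$, and the Schauder step already controls the rescaled norm of the solution operator. There is no substantial obstacle here: the content of the proposition is that coercivity plus the identification of the rescaled norm with the norm in the rescaled metric makes the Schauder estimates automatically uniform in $\e$; the only point to check is that the Schauder constants on unit $\tilde g$-balls remain bounded as $\e\to 0$, which is immediate because $(M,\tilde g)$ has uniformly bounded geometry at scale $1$, independent of $\e$.
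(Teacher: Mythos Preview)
Your argument is correct. The maximum principle gives the uniform $L^\infty$ bound and injectivity, the rescaling $\tilde g=\e^{-2}g$ turns $L_\infty$ into $\Delta_{\tilde g}-2$ on a manifold of bounded geometry (curvature $O(\e^2)$, injectivity radius $\gtrsim\e^{-1}$), so interior Schauder estimates on unit $\tilde g$-balls give the uniform $C^{2,\alpha}_\e$ bound, and Lax--Milgram plus elliptic regularity yields surjectivity.

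As for comparison with the paper: the paper does not supply its own proof of this proposition but simply cites Proposition~3.3 of Pacard's lecture notes. Your approach is precisely the standard one used in that reference (maximum principle for the a~priori $C^0$ bound, then Schauder theory in the rescaled metric), so there is no substantive difference to discuss.
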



\section{Elementary properties of solutions and subsolutions}\label{subandsuper}

The existence of positive solutions for the problem with Dirichlet boundary data
\begin{equation}\label{DAC}\begin{cases}
-\Delta u + W'(u)=0 & \text{ on } \Omega \\
u=0 &\text{ on } \partial \Omega,
\end{cases}\end{equation}
depends on the region being \textit{large enough}, in the sense that the first eigenvalue for the Laplacian has to be small. The  following of proposition follows from the results in \cite{Barlow2000} (see also Proposition 2.4 from \cite{GasparGuaraco}).

\begin{proposition}\label{existence}
Let $\Omega\subset M$ be a bounded open region with Lipschitz boundary and first eigenvalue $\lambda_1=\lambda_1(\Omega)$. There exists a unique positive solution of \cref{DAC} if and only if $\lambda_1<-W''(0)$. Moreover, $u$ and $-u$ are the unique global minima of the energy $E_1$ in $H^1_0(\Omega)$.
\end{proposition}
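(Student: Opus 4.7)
The plan is to combine the direct method in the calculus of variations with a Brezis--Oswald uniqueness argument. First I would verify that
\[
E(u) := \int_\Omega \tfrac{1}{2}|\nabla u|^2 + W(u)\, dx
\]
is coercive and weakly lower semicontinuous on $H^1_0(\Omega)$: coercivity follows from $W \geq 0$ together with Poincar\'e, while lower semicontinuity is standard for the gradient term and follows from the Rellich--Kondrachov embedding $H^1_0 \hookrightarrow L^4$ on the bounded domain for the quartic potential. The direct method then produces a global minimizer, which is a classical solution of \eqref{DAC} by elliptic regularity.

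Nontriviality of the minimizer is the content of the existence direction. I would test along the ray $t\phi_1$ where $\phi_1 > 0$ is the $L^2$-normalized first Dirichlet eigenfunction, expand $W$ to second order at $0$ using $W'(0)=0$, and obtain
\[
E(t\phi_1) = E(0) + \tfrac{t^2}{2}\bigl(\lambda_1 + W''(0)\bigr) + O(t^4).
\]
Whenever $\lambda_1$ is strictly below the relevant threshold, so that $\lambda_1 + W''(0) < 0$, the energy drops below $E(0)$ for small $t > 0$; hence a minimizer $u$ is not identically zero. Since $W$ is even and $|\nabla |u|| = |\nabla u|$ almost everywhere, $|u|$ is also a minimizer, so I may assume $u \geq 0$ and then apply the strong maximum principle to $-\Delta u = u - u^3$ (rewritten with a nonnegative zeroth-order coefficient) to conclude $u > 0$ in $\Omega$.

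For uniqueness of the positive solution I would invoke the Brezis--Oswald scheme: the nonlinearity $f(s) = -W'(s) = s(1-s^2)$ satisfies $s \mapsto f(s)/s = 1-s^2$ strictly decreasing on $(0,1)$, and for two positive solutions $u, v$ one tests the equation for $u$ against $(u^2 - v^2)/u$ and symmetrically the equation for $v$ against $(u^2 - v^2)/v$, subtracts, and applies a Picone-type identity to force $u \equiv v$. For the converse of the dichotomy, if any $u > 0$ solves \eqref{DAC}, testing against $\phi_1$ and integrating by parts gives
\[
\lambda_1 \int_\Omega u\phi_1 = \int_\Omega (u - u^3)\phi_1 < \int_\Omega u\phi_1,
\]
which yields the required strict inequality on $\lambda_1$. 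The ``$\pm u$ are the unique global minima'' statement then falls out: any minimizer $v$ has $|v|$ also minimizing by evenness of $W$, hence $|v|$ is a positive solution and equals $u$ by uniqueness; since $u > 0$ in the interior, $v$ cannot change sign, so $v = \pm u$.

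The only step requiring genuine care is the Brezis--Oswald manipulation --- the correct choice of test functions and the Picone identity to conclude $u \leq v$ pointwise --- but this is textbook for sublinear nonlinearities with $f(s)/s$ monotone, and I expect it to go through without surprises. Everything else is routine weak compactness plus maximum principle.
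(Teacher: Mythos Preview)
The paper does not give its own proof of this proposition; it simply cites Proposition~2.4 of Gaspar--Guaraco, which in turn rests on the Brezis--Oswald framework. Your sketch is exactly that framework: direct method for existence of a minimizer, second-variation test along $t\phi_1$ for nontriviality, Brezis--Oswald/Picone for uniqueness of the positive solution, and the eigenfunction pairing for the converse. So your approach is the intended one and is sound.

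One remark worth recording: your expansion
\[
E(t\phi_1) = E(0) + \tfrac{t^2}{2}\bigl(\lambda_1 + W''(0)\bigr) + O(t^4)
\]
is correct, and the nontriviality threshold it yields is $\lambda_1 + W''(0) < 0$, i.e.\ $\lambda_1 < -W''(0)$. Your converse argument (pairing a positive solution against $\phi_1$) likewise gives $\lambda_1 < 1 = -W''(0)$. Since $W''(0) = -1$ for the paper's quartic potential, the condition as printed in the proposition, $\lambda_1 < W''(0)$, is a sign slip that would be vacuous; the correct threshold is $\lambda_1 < -W''(0)$, and that is what your argument actually establishes in both directions.
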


\begin{proof} We summarize the ideas of the proof. For one direction, note that if the constant function $0$ is a strictly unstable critical point for $E_1$, then there must be a non-zero minimizer. The instability is precisely, $E''(0)(\phi,\phi)<0$ which implies $\lambda_1\leq \int |\nabla \phi|^2/\int \phi^2<-W''(0)$. For the other direction, assume that $u>0$ is a solution. Then $\lambda_1 \int u^2 \leq \int |\nabla u|^2=\int u^2(-W'(u)/u)$. Since $-W'(u)/u$ is monotone decreasing in $[0,1]$ and $W'(0)=0$, it follows that $\lambda_1\leq -W''(0)$. The uniqueness of the positive solution follows from \cite{Barlow2000}.
\end{proof}

The following is just Serrin's maximum principle (see \cite{FangHuaLi}).

\begin{proposition}\label{maxprinciple}
Let $u$ be a supersolution of \cref{AC} in $\Omega$ and $v$ be a subsolution of \cref{AC} in $\Omega$, i.e. $\Delta u - W'(u) \leq 0,$ and $\Delta v - W'(v) \geq 0.$ If $u\geq v$ in $\Omega$, then either $u=v$ or $u>v$ in $\Omega$.
\end{proposition}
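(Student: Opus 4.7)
The plan is to reduce this to a standard application of Hopf's strong maximum principle by working with the difference $w = u - v$. By hypothesis $w \geq 0$ on $\Omega$. I want to show that either $w$ is identically zero or $w$ is strictly positive everywhere.

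First I would set up the linearized inequality. From $\Delta u \leq W'(u)$ and $\Delta v \geq W'(v)$ we get
\begin{equation*}
\Delta w = \Delta u - \Delta v \leq W'(u) - W'(v).
\end{equation*}
Since $W \in C^\infty(\R)$, the fundamental theorem of calculus gives
\begin{equation*}
W'(u(x)) - W'(v(x)) = c(x)\, w(x), \qquad c(x) := \int_0^1 W''\bigl(v(x) + s\,w(x)\bigr)\,ds.
\end{equation*}
Because $u, v$ are bounded on any compact subset of $\Omega$ (they are continuous, at least $C^2$, by assumption) and $W''$ is a polynomial, the coefficient $c$ is locally bounded. Hence $w$ is a non-negative classical supersolution of the linear operator $L := \Delta - c(x)$, that is $Lw \leq 0$ on $\Omega$.

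Now I would invoke the Hopf strong maximum principle for linear elliptic operators with bounded coefficients of arbitrary sign in the zeroth order term: if $w \geq 0$ satisfies $Lw \leq 0$ and attains the value $0$ at an interior point $x_0 \in \Omega$, then $w \equiv 0$ in the connected component of $\Omega$ containing $x_0$. (No sign condition on $c$ is needed, because at any point where $w$ vanishes the term $c(x)w$ vanishes as well, so the standard proof via the Hopf boundary lemma on small balls goes through.) This gives the dichotomy: either there is no interior zero of $w$, in which case $w > 0$ throughout $\Omega$ and $u > v$; or $w$ vanishes at some interior point and hence $w \equiv 0$, i.e.\ $u \equiv v$.

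There is no real obstacle here — the statement is a direct packaging of the strong maximum principle, and the only thing to notice is that the nonlinearity $W'$ is $C^1$ so that the difference can be linearized with a bounded (and locally Lipschitz) coefficient. If one wanted to make this entirely self-contained one could quote, e.g., Gilbarg–Trudinger's Theorem 3.5, applied to $L = \Delta - c$.
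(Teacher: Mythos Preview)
Your argument is correct. The paper does not actually supply a proof of this proposition; it simply identifies the statement as Serrin's strong maximum principle and cites a reference. Your linearize-the-difference-and-apply-Hopf argument is precisely the standard proof of that principle, so there is no meaningful divergence to discuss.
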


In our context, this proposition can be applied together with the following standard lemma.

\begin{lemma}\label{theta}
Let $u$ be a positive subsolution of \cref{AC} on $\Omega$. Then, for all $\theta\in(0,1]$, the function $\theta u$ is also subsolution. Similarly, if $u$ is a positive supersolution, then for all $\theta\in [1,+\infty)$, the function $\theta u$ is also a supersolution.
\end{lemma}

\begin{proof}
Assume that $u$ is a positive subsolution. Let $\theta\in(0,1)$ and $x$ such that $u(x)>0$. Then 
$$
\Delta (\theta u(x))\geq \theta W'(u(x))=\theta u(x) \frac{W'(u(x))}{u(x)}\geq W'(\theta u(x)),$$
where the inequality comes from the monotonicity of $W'(t)/t=t^2-1$. The proofs for the supersolution case  is analogous.
\end{proof}

Using this fact we obtain the following useful result.

\begin{corollary}\label{below}
Let $\Omega \subset M$ be a bounded open region with smooth boundary. Let $u$ and $v$ be solutions to \cref{AC} in $\Omega$. If $u$ is continuous and positive on $\overline{\Omega}$ and $v$ has Dirichlet boundary data equal to $0$, then $u>v$.  
\end{corollary}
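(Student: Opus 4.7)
The plan is a standard sliding argument comparing $v$ with the one-parameter family $\theta u$ for $\theta \geq 1$. By Lemma \ref{theta}, since $u > 0$ on $\overline{\Omega}$, each $\theta u$ with $\theta \geq 1$ is a supersolution of \eqref{AC}. The equation $\Delta v = W'(v)/\e^2$ together with the maximum principle forces $|v| \leq 1$ on $\overline{\Omega}$ (at an interior max with $v > 1$ we would have $\Delta v \leq 0$ but $W'(v) > 0$, a contradiction). Since $u \geq c := \min_{\overline{\Omega}} u > 0$ by continuity and positivity, for every $\theta \geq 1/c$ we have $\theta u \geq 1 \geq v$ on $\overline{\Omega}$. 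Therefore the set
\[
\Theta := \{ \theta \geq 1 : \theta u \geq v \text{ on } \overline{\Omega}\}
\]
is nonempty, and its infimum $\theta_0$ lies in $\Theta$ by continuity.

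The key step is to rule out $\theta_0 > 1$. Suppose it held; then $\theta_0 u$ is a supersolution and $v$ is a subsolution with $\theta_0 u \geq v$ on $\overline{\Omega}$. Let $m := \min_{\overline{\Omega}}(\theta_0 u - v)$. If $m > 0$, then choosing $0 < \delta < m/\sup u$ gives $(\theta_0 - \delta) u \geq v$ on $\overline{\Omega}$, contradicting minimality of $\theta_0$ (note $\theta_0 - \delta \geq 1$ can be arranged provided $\theta_0 > 1$, adjusting $\delta$ if necessary). Hence $m = 0$, and since $v = 0 < \theta_0 u$ on $\partial\Omega$, the minimum must be achieved at some interior point $x_0 \in \Omega$. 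Serrin's maximum principle (Proposition \ref{maxprinciple}) then forces $\theta_0 u \equiv v$ in $\Omega$, which by continuity propagates to $\overline{\Omega}$ and contradicts $\theta_0 u > 0 = v$ on $\partial\Omega$. Thus $\theta_0 = 1$, giving $u \geq v$ on $\overline{\Omega}$.

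Finally, apply Proposition \ref{maxprinciple} once more to the solutions $u$ and $v$: either $u \equiv v$ in $\Omega$ or $u > v$ in $\Omega$. The identity $u \equiv v$ is impossible because $u > 0$ on $\partial\Omega$ while $v = 0$ there. This yields $u > v$ in $\Omega$, and combined with $u > 0 = v$ on $\partial\Omega$ we conclude $u > v$ throughout $\overline{\Omega}$. The main subtlety to execute carefully is the strict decrease step for $\theta_0$: one must verify that the touching point cannot migrate to $\partial\Omega$, which is precisely where the positivity of $u$ on $\overline{\Omega}$ (as opposed to just in $\Omega$) is used.
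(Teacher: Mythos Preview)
Your proof is correct and follows essentially the same sliding strategy as the paper, combining Lemma~\ref{theta} with Proposition~\ref{maxprinciple}. The only difference is that you scale $u$ from above ($\theta u$, $\theta\geq 1$) while the paper scales $v$ from below ($\theta v$, $\theta\in(0,1]$); your choice is marginally cleaner because $u>0$ on all of $\overline{\Omega}$, so $\theta u$ is a supersolution everywhere, whereas $\theta v$ is a subsolution only on $\{v>0\}$ and the paper implicitly uses that any first contact point must lie in this set.
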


\begin{proof}
Since $\overline{\Omega}$ is compact there is $\alpha>0$ such that $u\geq \alpha$ everywhere. In particular, for small values of $\theta>0$ one must have $\theta v < u$ on $\Omega$. This inequality also holds for $\theta=1$. If not, by continuously making $\theta \to 1$ from below, one would find a first point of contact of the graphs of $\theta v$ and $u$. Since, by  \cref{theta}, $\theta v$ is a subsolution for $\theta\in[0,1]$, this would contradict the maximum principle, i.e. \cref{maxprinciple}.

\end{proof}


\section{Standard elliptic estimates}\label{ellipticestimates}
In this section, we summarize the elliptic estimates we will use in the proofs contained in the next sections. In what follows, $\Omega \subset M$ denotes a Lipschitz open region of a fixed closed Riemannian manifold and $L_\e v$, a linear elliptic operator  of the form \begin{align}\label{operatorL}L_\e v =\e^2 \Delta_g v - c(x) v.\end{align}

\begin{theorem}[Estimates for weak solutions]\label{NashSchauder} There exists $\e_0=\e_0(M)>0$, such that for all $\e\in(0,\e_0)$ the following holds. 

Assume $f \in L^{\infty}(\Omega)$ and $\|c\|_{L^{\infty}(\Omega)}\leq K$, for some $K>0$. If  $v \in W^{1,2}(\Omega)$ is a weak solution of $L_\e v =f,$ we have for any $\Omega'\subset\subset \Omega$ the estimate \begin{align*} \|v \|_{C^{0,\alpha}_\e (\Omega')} & \leq C ( \e^{-n/2} \|v \|_{L^2(\Omega)}+\|f\|_{L^\infty(\Omega)}) \\ & \leq C ( \e^{-n/2}\|v \|_{L^\infty(\Omega)}+\|f\|_{L^\infty(\Omega)}), \end{align*} where $C=C(M, K, \e^{-1}d',\operatorname{Vol}(\Omega))>0$, $\alpha=\alpha(M,\e^{-1}d')$ and $d'= \operatorname{dist}(\Omega',\partial\Omega)$.
\end{theorem}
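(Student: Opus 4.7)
The plan is to reduce this to the classical De Giorgi--Nash--Moser interior Hölder estimate by rescaling the metric. Set $\tilde{g}=\e^{-2}g$; then $\Delta_{\tilde{g}}=\e^{2}\Delta_{g}$, so the equation $L_{\e}v=f$ becomes $\Delta_{\tilde{g}}v-c(x)v=f$, which is uniformly elliptic in divergence form with zeroth order coefficient still bounded by $K$. Under this rescaling, distances scale by $\e^{-1}$, so by Definition~\ref{cke} one has $\|v\|_{C^{0,\alpha}_{\e}(\Omega')}=\|v\|_{C^{0,\alpha}(\Omega',\tilde{g})}$; volumes scale by $\e^{-n}$, so $\|v\|_{L^{2}(\Omega,g)}=\e^{n/2}\|v\|_{L^{2}(\Omega,\tilde{g})}$; and the $L^{\infty}$ norm is scale-invariant.

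I would then invoke the standard interior Hölder bound for weak solutions in divergence form---Moser's local $L^{\infty}$ estimate combined with the De Giorgi--Nash regularity theorem---applied to the rescaled equation. This gives
\[
\|v\|_{C^{0,\alpha}(\Omega',\tilde{g})}\leq C\bigl(\|v\|_{L^{2}(\Omega,\tilde{g})}+\|f\|_{L^{\infty}(\Omega,\tilde{g})}\bigr),
\]
where $\alpha$ and $C$ depend on $K$, on the geometry of $(\Omega,\tilde{g})$, and on the $\tilde{g}$-distance $\e^{-1}d'$ from $\Omega'$ to $\partial\Omega$. Translating this inequality back through the rescaling produces the first of the two claimed estimates at once, and the second follows from $\|v\|_{L^{2}(\Omega)}\leq\operatorname{Vol}(\Omega)^{1/2}\|v\|_{L^{\infty}(\Omega)}$ after absorbing $\operatorname{Vol}(\Omega)^{1/2}$ into the constant.

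The main point needing care---and where the threshold $\e_{0}=\e_{0}(M)$ enters---is the uniform-in-$\e$ control of the exponent $\alpha$ and the constant $C$. The rescaled metric $\tilde{g}$ has sectional curvatures of order $\e^{2}$ and injectivity radius of order $\e^{-1}$, so for $\e<\e_{0}$ the pair $(\Omega,\tilde{g})$ is uniformly quasi-Euclidean on unit scales. Covering $\Omega'$ by $\tilde{g}$-balls of radius comparable to $\e^{-1}d'$ and running the Moser and De Giorgi iterations on each, the relevant Sobolev constants converge to their Euclidean values as $\e\to 0$, so the DGM constants stabilize and may be taken to depend only on $M$, $K$, $\e^{-1}d'$, and $\operatorname{Vol}_{g}(\Omega)$. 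This bookkeeping is the only nontrivial ingredient; once it is in place, the rest of the argument is immediate.
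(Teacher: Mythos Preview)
The paper does not actually prove this theorem; it is stated without proof in Section~\ref{ellipticestimates} as one of several ``standard elliptic estimates'' being collected for later use. Your proposal---rescaling the metric by $\e^{-2}$ so that $L_\e$ becomes an unrescaled divergence-form operator and then invoking the classical De Giorgi--Nash--Moser interior H\"older estimate---is exactly the standard derivation of such $\e$-weighted bounds, and the computations you give for how the various norms transform under the rescaling are correct. Your remark about why the constants stabilize (the rescaled geometry becoming uniformly quasi-Euclidean for $\e<\e_0$) is the right justification for the uniform dependence claimed in the statement. In short, your argument is correct and is precisely what the authors are implicitly relying on when they quote this as a standard fact.
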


\begin{theorem}[Schauder estimates]\label{Schauder} Given $v \in C^{2,\alpha}(\Omega)$ and $\Omega'\subset\subset \Omega$ we have the estimate \begin{align*} \|v \|_{C^{2,\alpha}_\e(\Omega')} \leq C ( |v |_{C^{0}(\Omega)}+\|L_\e v\|_{C^{0,\alpha}_\e(\Omega)}),\end{align*} where $C=C(M,\|c\|_{C^{0,\alpha}_\e(\Omega)}, \alpha, \e^{-1}d')>0$ and $d'= \operatorname{dist}(\Omega',\partial\Omega)$.\end{theorem}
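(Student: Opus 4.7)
The plan is to reduce the statement to the classical interior Schauder estimate by rescaling the ambient metric. Set $\tilde g := \e^{-2} g$. Because the Laplace--Beltrami operator scales homogeneously, $\Delta_{\tilde g} = \e^{2} \Delta_{g}$, so
\[
L_\e v = \e^2 \Delta_g v - c(x) v = \Delta_{\tilde g} v - c(x) v,
\]
which is a unit-scale uniformly elliptic operator on the Riemannian manifold $(M,\tilde g)$. By Definition \ref{cke}, the weighted norm $\|\cdot\|_{C^{k,\alpha}_\e(\Omega)}$ is by construction exactly the classical Hölder norm $\|\cdot\|_{C^{k,\alpha}(\Omega,\tilde g)}$, and the hypothesis that $\|c\|_{C^{0,\alpha}_\e(\Omega)}$ is controlled is exactly a uniform $C^{0,\alpha}(\tilde g)$ bound on the zero-order coefficient. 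Under this dictionary, the theorem is just the standard interior Schauder estimate applied to $L_\e$ in the rescaled geometry, with the $\tilde g$-distance from $\Omega'$ to $\partial\Omega$ being $\e^{-1} d'$; this is what produces the dependence on $\e^{-1} d'$ in the final constant.

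To carry this out, I would first cover $\Omega'$ by finitely many $\tilde g$-geodesic normal-coordinate balls of radius comparable to $\e^{-1}d'/2$ that remain inside $\Omega$. In normal coordinates for $\tilde g$ at unit scale, the coefficients of $\Delta_{\tilde g}$ have the form $\tilde a^{ij}(y)\partial_{ij} + \tilde b^i(y)\partial_i$, and since $(M,g)$ is a fixed closed manifold with smooth metric, the metric components and their first derivatives are bounded uniformly in $C^{1,\alpha}$ on such unit balls, independently of $\e$ (the rescaling makes curvatures vanish in the limit, so the geometry improves rather than deteriorates as $\e\to 0$). Hence $\tilde a^{ij}$ is uniformly elliptic and $\tilde a^{ij},\tilde b^i$ are uniformly $C^{0,\alpha}$ bounded, with all constants depending only on $M$. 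Applying the classical Euclidean interior Schauder estimate (e.g.\ Gilbarg--Trudinger, Thm.\ 6.2) on each ball gives
\[
\|v\|_{C^{2,\alpha}(B_{r/2},\tilde g)} \leq C\bigl(\|v\|_{C^0(B_r,\tilde g)} + \|L_\e v\|_{C^{0,\alpha}(B_r,\tilde g)}\bigr),
\]
with $C$ depending on $M$, $\|c\|_{C^{0,\alpha}_\e}$, $\alpha$, and the ratio of $r$ to the ball separation parameter (which is exactly $\e^{-1}d'$ up to a multiplicative constant). Taking the maximum over the finite cover and translating back via Definition \ref{cke} yields the stated inequality.

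I do not expect any analytic difficulty beyond careful bookkeeping of the scaling. The only point that deserves verification is that the coefficients of $\Delta_{\tilde g}$ in $\tilde g$-normal coordinates are uniformly $C^{0,\alpha}$ bounded with $\e$-independent constants, which holds because normal coordinates of unit radius in $\tilde g$ correspond to normal coordinates of radius $O(\e)$ in $g$ on a fixed smooth compact manifold, where the metric is $\delta_{ij} + O(\e^2 R)$ with uniformly bounded derivatives. Everything else is the standard interior Schauder machinery transported through the rescaling.
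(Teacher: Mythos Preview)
Your argument is correct: rescaling the metric by $\e^{-2}$ converts $L_\e$ into a unit-scale uniformly elliptic operator and the weighted norms into ordinary H\"older norms, after which the classical interior Schauder estimate applies with constants depending only on $M$, $\alpha$, $\|c\|_{C^{0,\alpha}_\e}$, and the rescaled distance $\e^{-1}d'$. The paper does not supply its own proof of this statement---it is recorded in Section~\ref{ellipticestimates} as one of several standard elliptic estimates being summarized for later use---so there is no alternative argument to compare against; your derivation is exactly the standard justification.
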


\begin{lemma}[Exponential decay lemma]\label{exponentialdecaylemma}

Let $\Sigma \subset M$ be a two-sided embedded hypersurface. Assume that Fermi coordinates around $\Sigma$ are defined up to height $\rho>0$. Let $-\rho< t_0 \leq T_0<\rho$ and  $L_\e=\e^2\Delta - c(x)$, with $\min_{\{t_0\leq t\leq T_0\}} c\geq  c_0>0$. 

There are positive constants $\sigma$ and $\e_0$ (depending only on $M$, $\Sigma$ and $c_0$), such that for all $\e\in(0,\e_0)$ and $d \in [0,\frac{T_0-t_0}{2}]$, if $v\in C^{2,\alpha}(\{t_0 \leq |t|\leq T_0\})$ satisfies $L_\e v \geq -a$, with $a\geq 0$, then \begin{equation}\label{nexp}
     \max_{\{t_0+d\leq |t|\leq T_0-d\}} v \leq 2 \| v \|_{L^\infty(M)} e^{-\sigma d/\e}+\frac{a}{c_0}.
\end{equation}
\end{lemma}

\begin{proof}
First, we deal with the case $a=0$. Let $h(t)=\|v\|_{L^\infty(M)}(e^{-\sigma (t-t_0)/\e}+e^{\sigma(t-T_0)/\e})$, where $\sigma>0$ will be chosen later. Then $\e^2 \Delta h\leq  [\sigma^2 + \e (\|H_t\|_{L^\infty(\{|t|\leq \rho\}}+1)]h$. Assume that $v-h$ attains a positive maximum at $(x',t')$. Since $v-h<0$ in $\{t=t_0\}\cup \{t=T_0\}$, this must be an interior point, i.e. $(x',t')\in\{0<t<\rho\}$. Therefore, at $(x',t')$, $0 \geq \e^2 \Delta (v-h)\geq [c_0-\sigma^2-\e (\|H_t\|_{L^\infty(\{|t|\leq \rho\}}+1)](v-h)$. Since we are assuming $(v-h)(x',t')>0$, we obtain a contradiction as long as  $0<\e<\e_0=(c_0/2)(\|H_t\|_{L^\infty(\{|t|\leq \rho\}}+1)^{-1}$ and $\sigma=\frac{\sqrt{c_0}}{2}$. In conclusion, with these choices of $\e$ and $\sigma$ we have \begin{equation*}
    v(x,t)\leq \|v\|_{L^\infty(M)}(e^{-\sigma( t-t_0)/\e}+e^{\sigma(t-T_0)/\e}) \text{ on } \{t_0\leq |t|\leq T_0\}
\end{equation*} which implies \cref{nexp} when $a=0$.  
To obtain the case $a>0$, we set $\tilde v= v-\frac{a}{c_0}$. Then, $L_\e \tilde v\geq 0$ and \cref{nexp} follows from applying the case $a=0$ to $\tilde v$.

\end{proof}

We can use \cref{exponentialdecaylemma} together with the following lemma, to obtain estimates on solutions to $\cref{AC}$ far from their nodal set. This is the content of \cref{expocoro}.

\begin{lemma}
\label{ulower} There exists $\e_0 \in (0,\operatorname{inj}(M))$ and $\nu>0$, such that if $u$ is a solution of \cref{AC} for $\e\in(0,\e_0)$ that does not vanish on $B(p,\e)\subset M$, then $ |u(p)|\geq \nu$.
\end{lemma}

\begin{proof} Without loss of generality, we can assume $u>0$ on $B(p,\e)$. Let $u_{\e }$ be the only positive solution to \cref{AC} with zero Dirichlet boundary condition on $B(p,\e )$, which exists by \cref{existence} (after rescaling if necessary). Note that, by \cref{below}, we have $u(p)\geq u_{\e }(p)$. In addition, $\e$-rescalings of $u_{\e }(p)$ around the point $p$ converge to $\tilde u$, the unique positive solution to \cref{DAC} on $B(0,1)\subset \R^n$. Therefore, $\liminf_{\e\to 0} u(p)\geq \tilde u(0)>0$. This proves the statement.
\end{proof}

\begin{corollary}\label{expocoro} Fix $k\geq 0$ and let $\Sigma$ and $\rho$ be as in \cref{exponentialdecaylemma}. There is an integer $m_k$, such that for every $0<r+\e<R<\rho$ and $u$ satisfying  \cref{AC} with nodal set $\{u=0\} \subset \{|t|<r\}$ we have  $$\| \sgn(u)-u \|_{C^{k,\alpha}_\e(M\setminus \{|t|<R\})} =O(\e^{-m_k}e^{-(R-r)/\e}).$$
\end{corollary}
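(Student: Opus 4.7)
The plan is to apply Lemma \ref{exponentialdecaylemma} to $v := \sgn(u) - u$ on a slightly shrunken complement of $N(r)$, then upgrade to $C_\e^{k,\alpha}$ via Theorem \ref{Schauder}. Since $\{u=0\}\subset N(r)$, the function $u$ has constant sign on each component of $M\setminus N(r)$; WLOG consider one on which $u>0$ and set $v=1-u\geq 0$ (the negative case follows by $u\mapsto -u$, using oddness of $W'$). From $\e^2\Delta u = W'(u) = u(u-1)(u+1)$, a direct computation gives the linear equation $\e^2\Delta v - c(x)\,v = 0$ with coefficient $c(x):=u(1+u)$, and the maximum principle yields $|u|,|v|\leq 1$.

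To secure a pointwise lower bound $c\geq c_0>1$ I invoke Lemma \ref{ulower}: if $B(p,K\e)\cap\{u=0\}=\emptyset$ then $1-|u(p)|\leq\nu(1/K)$. Choosing $K$ once and for all large enough (depending only on $M$ and $\Gamma$) that $\nu(1/K)<1/4$ gives $u\geq 3/4$ and hence $c\geq (3/4)(7/4)>1$ on $\Omega:=M\setminus N(r+C_1 K\e)$, where $C_1$ is an absolute constant absorbing the Fermi-coordinate metric distortion that relates Fermi-radius shifts to geometric distance from $N(r)$. Applying Lemma \ref{exponentialdecaylemma} on $\Omega$ with $a=0$, $c_0>1$, and any fixed $\sigma\in(1,\sqrt{c_0})$, and using $\|v\|_{L^\infty(\partial\Omega)}\leq 1$, yields
\[
v(p)\leq C\,\max\bigl\{e^{-\sigma\dist_{\partial\Omega}(p)/\e},\, e^{-\sigma\rho/\e}\bigr\}, \qquad p\in\Omega,
\]
for some fixed $\rho>0$. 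For $p\in M\setminus N(R)$ the geometric distance to $\partial\Omega$ is at least $R-r-C_1 K\e$ inside the Fermi tube (and much larger outside), so $v(p)\leq C\,e^{\sigma C_1 K}\,e^{-\sigma(R-r)/\e}\leq C'\,e^{-(R-r)/\e}$ because $\sigma\geq 1$. The edge case $R-r\leq C_1 K\e$ is handled trivially, since then $e^{-(R-r)/\e}$ is bounded below by a positive constant and $|v|\leq 1$.

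Finally, I iterate Theorem \ref{Schauder} on the linear equation $\e^2\Delta v - cv = 0$ over nested domains that shrink by $O(\e)$ at each step; since $c$ is polynomial in $u$ and $u$ has bounded rescaled H\"older norms, the coefficient has bounded $C_\e^{k,\alpha}$ norms, and each application upgrades $L^\infty$ control on the outer domain to $C_\e^{2,\alpha}$ control on the inner one. After finitely many iterations this gives the desired $C_\e^{k,\alpha}$ bound on $M\setminus N(R)$, with at worst a polynomial factor $\e^{-m_k}$ in the constant, as allowed by the statement. The principal technical obstacle is the geometric bookkeeping in the middle paragraph: one must choose $K$ independently of $\e, r, R$ so that the rate $\sigma>1$ is actually available and the prefactor $e^{\sigma C_1 K}$ remains an $\e$-independent constant; once this is verified, everything else reduces to the stated lemmas.
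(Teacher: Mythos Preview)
Your proof is correct and follows essentially the same route as the paper: write $\e^2\Delta v - c\,v=0$ for $v=\sgn(u)-u$ with $c=|u|^2+|u|$, invoke Lemma \ref{ulower} to bound $c$ from below away from $N(r)$, apply Lemma \ref{exponentialdecaylemma} for the exponential $L^\infty$ decay, and then bootstrap via Schauder estimates on derivatives of $v$. The only cosmetic differences are that you take a buffer of width $K\e$ (rather than $\e$) to secure a decay rate $\sigma>1$, and you pass directly from $L^\infty$ to $C^{2,\alpha}_\e$ via Theorem \ref{Schauder}, whereas the paper routes through Theorem \ref{NashSchauder} first (picking up the harmless $\e^{-n/2}$) before iterating Schauder.
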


\begin{proof} On $M\setminus \{u=0\}$ we can rewrite \cref{AC} as $\e^2\Delta v - c v=0,$ where $v=1-u$ and $c=|u|^2+|u|$. From  \cref{ulower}, it follows that $|u|\geq \nu$ on $M\setminus \{|t|<r+\e\}$. Therefore, $c$ is uniformly bounded from below in this region.
By  \cref{exponentialdecaylemma}, and since $0\leq 1-u =v
\leq 1$, there exists $C>0$ such that
\begin{align*}\sup\limits_{\partial \{|t|<R\}}|v| &\leq C e^{-(R-r)/\e},
\end{align*} for all $R\in(r+\e,\rho)$. By the maximum principle, $\|v \|_{L^\infty(M\setminus \{|t|<R\})}\leq \sup\limits_{\partial \{|t|<R\}}|v|$. It follows from  \cref{NashSchauder} that $\|v \|_{C_\e^{2,\alpha}(M\setminus \{|t|<R\})}=O(\e^{-n/2} e^{-(R-r)/\e})$. The estimate for $C^{k,\alpha}_\e$ follows from iteratively applying Schauder estimates to derivatives of $v$.
\end{proof}


\section{Characterizations of entire one dimensional solutions}\label{entireclass}

\begin{definition}An entire solution $u$ of \cref{AC} in $\R^n$ is said to be \textit{one dimensional} if there are $p,v\in \R^n$, with $|v|\leq 1$, such that $u(x)=\psi(\e^{-1}(x-p)\cdot v)$. \end{definition}

A one dimensional solution has parallel planar level sets, with its profile in the orthogonal direction to these planes being a translation of $\psi(t/\e)$. Characterizing such solutions is the first step in order to obtain curvature estimates for the level set of general solutions. In the late 70s, De Giorgi conjectured that entire monotone bounded solutions in $\R^n$ of \cref{AC} should be one dimensional, at least for $n\leq 8$. This is now known to be true for $n=2,3$ (see \cite{GhoussoubGui} and \cite{AmbrosioCabre}, respectively) and false for $n\geq 9$ (see \cite{DelPino}). In dimensions $4\leq n\leq 8$, it is known  under the additional hypothesis of Savin's Theorem (see \cite{Savin,Wang}) which is an analogue of Bernstein's Theorem. At the present time, entire stable solutions of the Allen-Cahn are known to be one-dimensional only when $n=3$ and they have finite multiplicity at infinity (\cite{AmbrosioCabre}).

\

In this section, we summarize two characterizations of one dimensional solutions. First, they are the only entire solutions with multiplicity one at infinity (see \cite{Wang}, and  \cref{wangentire}). Second, they are the only entire solutions having its nodal set enclosed in between two parallel planes (see \cite{Farina} and  \cref{gibbons}).

\

For solutions with multiplicity one at infinity, we have the following theorem (see \cref{wanglocal}, for a local version).
\begin{theorem}[K. Wang, \cite{Wang}]\label{wangentire} There is $\tau_0\in\R$, such that if $u$ is an entire solution to \cref{AC}, with $\e=1$, in $\R^{n+1}$, then $$\lim_{R\to\infty }R^{-n}\int_{B_{R}} \frac{|\nabla u|^2}{2}+W(u)\leq (1+\tau_0)\omega_n\sigma_0,$$ implies that $u$ is one dimensional.
\end{theorem}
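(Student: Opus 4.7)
The plan is to combine Modica's monotonicity formula, the Hutchinson--Tonegawa compactness theorem, Allard's regularity theorem, and Savin's improvement-of-flatness rigidity. By Modica's monotonicity formula for entire Allen--Cahn solutions in $\R^{n+1}$, the normalized energy $\theta(R):=R^{-n}\int_{B_R}(|\nabla u|^2/2+W(u))$ is nondecreasing in $R$, so the hypothesis gives $\theta(R)\le(1+\tau_0)\omega_n\sigma_0$ for every $R>0$ and a limit $\theta_\infty:=\lim_{R\to\infty}\theta(R)$ exists.

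Now consider the blow-downs $u_\lambda(x)=u(\lambda x)$, which solve \eqref{AC} with $\e=1/\lambda$. The rescaled energy measures $\mu_\lambda:=\lambda^n(|\nabla u_\lambda|^2/2+W(u_\lambda))\,\mathcal{L}^{n+1}$ satisfy $\mu_\lambda(B_R)=\omega_n R^n\theta(\lambda R)$ and are therefore uniformly bounded on compacts. By Hutchinson--Tonegawa, along some $\lambda_k\to\infty$ the measures $\mu_{\lambda_k}$ converge in the varifold sense to $\sigma_0\|V\|$ for a stationary integral $n$-varifold $V$ in $\R^{n+1}$. The equality case of the monotonicity formula (forced by the convergence of $\theta$) makes $V$ a cone with vertex at $0$. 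Its density at the vertex equals $\theta_\infty/(\sigma_0\omega_n)\le 1+\tau_0$, which by the monotonicity formula applied at arbitrary points also bounds the density everywhere on the support of $V$. Choosing $\tau_0<1$ and using integrality of the density at $\|V\|$-a.e.\ point, the density is exactly $1$ almost everywhere. Allard's regularity theorem then implies that $V$ is a smooth multiplicity-one minimal hypersurface; being a cone, it must be a hyperplane $\Pi$ through $0$. Translating back through the rescaling, for every $\eta>0$ there is $k_0$ with $\{u=0\}\cap B_{\lambda_k}\subset\{x:\dist(x,\lambda_k\Pi)<\eta\lambda_k\}$ for all $k\ge k_0$.

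Once the zero set is contained in a flat slab at a single sufficiently large scale, Savin's improvement-of-flatness scheme takes over: the flatness propagates to all larger scales with a geometric decay rate, and the resulting rigidity forces $u$ to be the one-dimensional profile $\psi(x\cdot\nu+c)$ for some $c\in\R$, where $\nu$ is the unit normal to $\Pi$. The main obstacle is the calibration of the constants. The parameter $\tau_0$ must be small enough that (a) the density gap in the blow-down triggers Allard's universal constant and produces the hyperplane in the cone step, and (b) the resulting flatness at scale $\lambda_k$ is below Savin's universal flatness threshold, which depends on $n+1$ but not on $u$. Showing that these constants are mutually compatible is the technical, dimension-dependent content of \cite{Wang}, which we would invoke rather than recreate.
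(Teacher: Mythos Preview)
The paper does not prove this theorem; it is stated in Section~\ref{entireclass} purely as a citation of K.~Wang's result \cite{Wang} and used as a black box. There is therefore no ``paper's own proof'' to compare your attempt against.

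That said, your outline is a reasonable sketch of how Wang's argument (and its predecessors) actually proceeds: Modica monotonicity gives a well-defined density at infinity, Hutchinson--Tonegawa compactness on blow-downs produces a stationary integral cone, the energy bound with $\tau_0<1$ forces density one and hence a hyperplane, and then Savin/Wang improvement-of-flatness upgrades large-scale flatness to the global one-dimensional structure. Two small points of care: first, the improvement-of-flatness iteration runs toward \emph{smaller} scales, not larger; flatness at all large scales comes directly from the blow-down convergence, and then Savin's scheme propagates it downward to conclude the level sets are graphs with vanishing gradient, hence planes. Second, to pass from ``density $\le 1+\tau_0$ at the vertex'' to ``the cone is a hyperplane'' you should invoke that for a stationary integral cone the vertex density is the maximum, so integrality forces it to equal one everywhere on the support, after which Allard plus the cone structure gives a plane. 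Your final paragraph correctly identifies that the genuine technical content---matching Allard's and Savin's universal thresholds to a single admissible $\tau_0$---is precisely what \cite{Wang} supplies.
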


For solutions with nodal set contained between two parallel planes we use the following version of Gibbons conjecture due to Farina (\cite{Farina}).

\begin{theorem}\label{gibbons}
Let $u: \R^n \to \R$ be a solution to $\Delta u- W'(u)=0$. Assume the nodal set $\{u=0\}$ is contained in a slab $\{ x \in\R^n : |\langle x, v\rangle| \leq K \}$, for some $K\geq 0$ and some direction $v\in S^{n-1}$. Then, $u$ is one dimensional, i.e $u(x)=\pm\psi((x-dv)\cdot v)$, for some $|d|\leq K$. 
\end{theorem}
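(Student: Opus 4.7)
The plan is to establish the result by a sliding argument in the direction $v$, followed by a translation-invariance argument in the $(N-1)$-dimensional subspace orthogonal to $v$. Assume $v=e_N$ and write $x=(x',x_N)\in\R^{N-1}\times\R$, so the hypothesis becomes $\{u=0\}\subset\{|x_N|\leq K\}$ (I read the ``slab'' condition symmetrically, consistent with the conclusion). If the nodal set is empty, then by standard Liouville-type results for one-signed bounded solutions $u\equiv\pm 1$, which is one-dimensional in the trivial sense. Otherwise $u$ has opposite constant signs on the two half-spaces $\{\pm x_N>K\}$ (equal signs would contradict Proposition \ref{maxprinciple} applied at a global minimum of $u$ inside the slab), and we normalize so $u>0$ above and $u<0$ below.

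The first step is to prove uniform exponential asymptotics: $u(x',x_N)\to\pm 1$ as $x_N\to\pm\infty$, uniformly in $x'$. Since $u>0$ on $\{x_N>K\}$, Lemma \ref{ulower} applied to unit balls shifted increasingly far from the slab yields a uniform lower bound $u\geq\theta>0$ on $\{x_N\geq K+1\}$. This bound makes the effective zeroth-order coefficient in the equation satisfied by $1-u$ uniformly positive, so Lemma \ref{exponentialdecaylemma} applied to $1-u$ (on large half-spaces with boundary data controlled by the trivial bound $|1-u|\leq 2$) gives exponential decay of $1-u$ as $x_N\to\infty$, uniformly in $x'$; the same argument on the other side handles $1+u$.

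Next, I would establish monotonicity in the $x_N$-direction, $\partial_{x_N}u>0$. For $t>0$ consider $u^t(x)=u(x',x_N+t)$ and $w_t=u^t-u$, which satisfies a linear elliptic equation; by Step 1, $w_t\to 0$ uniformly as $x_N\to\pm\infty$ and $w_t$ is strictly positive outside a bounded set for $t$ large. Set $t_\star=\inf\{t>0 : w_s\geq 0 \text{ on } \R^N \text{ for all } s\geq t\}$. If $t_\star>0$, the sliding argument produces a nonnegative $w_{t_\star}$ which either vanishes at an interior point, contradicting the strong maximum principle (Proposition \ref{maxprinciple}), or is approached by zero along a sequence $x_k\to\infty$; the uniform decay in $x_N$ forces the $x_N$-components of $x_k$ to remain bounded, so a translation in $x'$ followed by a subsequential elliptic limit produces a limiting solution with an interior contact, contradicting Proposition \ref{maxprinciple} once more. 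Hence $t_\star=0$, so $\partial_{x_N}u\geq 0$, and the strong maximum principle applied to $\partial_{x_N}u$ (which solves the linearized equation) upgrades this to $\partial_{x_N}u>0$.

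Finally, for any unit $e\perp e_N$ and any $h\in\R$, the function $\varphi(x)=u(x+he)-u(x)$ solves a linear elliptic equation and vanishes uniformly as $x_N\to\pm\infty$. The same extremum-attainment/translation-compactness scheme as above forces $\varphi\equiv 0$, so $u=u(x_N)$. The reduced ODE $u''=W'(u)$ with boundary conditions $u(\pm\infty)=\pm 1$ and zero set inside $\{|x_N|\leq K\}$ has unique solution, up to horizontal translation, the canonical profile $\psi$, giving $u(x)=\pm\psi((x-dv)\cdot v)$ with $|d|\leq K$. The main obstacle throughout is the rigorous handling of touching-at-infinity in both the sliding step and the independence-of-$x'$ step; the enabling technical input is the uniform exponential asymptotics from Step 1, which ensures that whenever a supremum or infimum of a difference function is approached only at infinity, the approaching sequence stays bounded in the $x_N$-direction, so that a translation-compactness argument in $x'$ reduces the scenario to an interior strong-maximum-principle application on a limiting solution.
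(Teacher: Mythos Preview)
The paper does not supply its own proof of this theorem; it is quoted from \cite{FarinaNew}, so there is no in-paper argument to compare against. Your sliding strategy is the classical route to Gibbons-type results (essentially the Berestycki--Hamel--Monneau scheme), and Steps 1--3 (uniform asymptotics, starting the sliding for large $t$, monotonicity in $x_N$) are correctly outlined, with the touching-at-infinity obstruction properly identified and handled by translation-compactness.

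There is, however, a genuine gap in the final step. The difference $\varphi(x)=u(x+he)-u(x)$ satisfies $\Delta\varphi = c(x)\varphi$ with
\[
c(x)=\int_0^1 W''\big(\theta\, u(x+he)+(1-\theta)\,u(x)\big)\,d\theta,
\]
and $c$ is \emph{not} nonnegative on the transition region $\{|x_N|\lesssim K\}$, since $W''(0)=-1<0$. Hence, even after producing (via translation-compactness) a limiting solution for which $\varphi_\infty$ attains an interior global maximum $M>0$, the strong maximum principle does \emph{not} force $\varphi_\infty\equiv M$: at that maximum one only learns $c\le 0$ there, which is no contradiction. Proposition~\ref{maxprinciple} likewise requires an a priori ordering $u(\cdot+he)\ge u$, which is precisely what is in question. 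The monotonicity step avoided this trap because the sliding could be \emph{started} at large $t$, where the ordering $u^t\ge u$ is available; for a purely horizontal shift $he$ there is no such starting configuration, so ``the same extremum-attainment scheme'' does not apply.

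The standard repair is to run the sliding argument of Step~3 not only in the direction $e_N$ but in \emph{every} direction $\nu$ with $\nu\cdot e_N>0$; your Step~1 asymptotics are exactly what is needed to initiate and close that sliding. One obtains $\partial_\nu u\ge 0$ for all such $\nu$, and then for any $e\perp e_N$ and $\epsilon\in(0,1)$ the two inequalities
\[
\partial_{\sqrt{1-\epsilon^2}\,e_N+\epsilon e}\,u\ge 0,\qquad \partial_{\sqrt{1-\epsilon^2}\,e_N-\epsilon e}\,u\ge 0
\]
combine to give $|\partial_e u|\le \tfrac{\sqrt{1-\epsilon^2}}{\epsilon}\,\partial_{e_N}u$; letting $\epsilon\to 1^-$ yields $\partial_e u\equiv 0$, and the ODE conclusion follows as you wrote.
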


Actually, Farina proved \cref{gibbons} under the additional assumption that the sets $\Omega_{+}=\{ u >0 \}$ and $\Omega_{-}=\{ u <0 \}$ are both unbounded in the direction orthogonal to the hyperplanes (see Theorem 2.1 of \cite{Farina}). In \cref{omegaunbounded} below, we show that this hypothesis is not necessary. Note that this represents a breaking point in the analogy between solutions of \cref{AC} and minimal hypersurfaces, as there are examples of rotationally symmetric three dimensional catenoids contained in a slab of $\R^4$ with finite height (see Section 2 of \cite{Tam}).

\begin{lemma}\label{omegaunbounded}
Let $u: \R^N \to \R$ be a solution to $\Delta u- W'(u)=0$. Assume the nodal set $\{u=0\}$ is contained in a slab $\{ x \in\R^n : |\langle x, v\rangle| \leq K \}$, for some $K\geq 0$ and some direction $v\in S^{N-1}$. Then, $\Omega_{\pm}$ are both unbounded on the direction of $v$.
\end{lemma}

\begin{proof}[Sketch of the proof]

Without loss of generality assume $v=e_n.$ Denote the coordinate functions of $\R^n=\R^{n-1}\times \R$ by $(x,y)$. Let $(x_n,y_n)\in \{u=0\}$ be a sequence such that $y_n\to y_0 = \sup_{\{u=0\}}y$. Define $u_n(x,y)=u(x-x_n,y)$ then $u_n\to \tilde u$ smoothly on compacts sets. Moreover, the limit $\tilde u$ is an entire solution with nodal set enclosed by two horizontal planes and such that the nodal set attains its maximum height at $(0,y_0)\in\{\tilde u = 0\}$. At this step, we use a sliding argument together with the maximum principle to show that $\tilde u$ is itself a one dimensional solution, concluding that the sets $\{\tilde u>0\}$ and $\{\tilde u<0\}$ are both unbounded. Finally, note that outside of the region enclosed by the horizontal planes the signs of $\tilde u$, $u_n$ and $u$ are the same. 
\end{proof}




   
\section{Proof of \cref{zerolevelset}}\label{strictlystable}

\cref{zerolevelset} is derived in several steps. First, once appropriate barriers have been constructed, a sliding argument shows that the distance from the nodal set $\{u=0\}$ to the limit interface $\Gamma$ is of order $O(\e)$. This implies that the nodal set of $\e$-blow-ups of $u$ near $\Gamma$, is bounded between two horizontal planes in $\mathbb{R}^n$. From the characterization of  \cref{gibbons}, they converge to an entire one-dimensional solution. Finally, we obtain the desired energy estimates combining the analysis near $\Gamma$ with \cref{exponentialdecaylemma}.

The following result, whose proof we delay to the next section, summarizes the existence of the barriers necessary for the sliding argument. We emphasize that it is crucial that $\Gamma$ is strictly stable. In this case there is a tubular neighborhood of $\Gamma$ which is foliated by strictly mean-convex constant mean curvature leaves (except the central leaf which is $\Gamma$ itself). It is because of the strict mean-convexity, that each leaf gives rise to a barrier with the right sign. 

\begin{lemma}\label{barriers}
There exist $C>0$, $\e_0>0$ and $r_0>0$, depending only on $\Gamma$ and $U$, such that for each $\e\in(0,\e_0]$, there is a continuous family of functions $v_{H} \in C^{2,\alpha}(M)$, depending on the parameter $ H \in(C\e, H_\max]$ such that:

\begin{enumerate}
    \item $\e^2 \Delta v_H - W'(v_H)>0$,
    \item $\{v_H=0\}\subset \{H-C\e<|t|<H+C\e\}$, and
    \item $v_H<\operatorname{sgn}-\frac{1}{C}\e^2$ in $\{t=r_0\}$.
\end{enumerate}
\end{lemma}

Assuming \cref{barriers} we are now ready to prove \cref{zerolevelset}.

\begin{proof}[\textbf{Proof of  \cref{zerolevelset}.}]
Let $u=u_\e$ be the sequence of solutions to \cref{AC} whose nodal set $\{u=0\}$ converges towards $\Gamma$ in the Hausdorff distance as $\e\to 0$. Therefore, for $\e>0$ sufficiently small, we have $\{u=0\}\subset\{|t|\leq r_0/3\}$. Working with $-u$ instead of $u$ and making $\e$ smaller if necessary, we can also assume that $u>1/2 \text{ in } \{r_0/3\leq t\leq 3 r_0\}$ and $|u|>1/2$ in $\{-3r_0<t<-r_0/3\}$. Note that we are making no assumptions on the sign of $u$ for negative values of $t$. By \cref{exponentialdecaylemma}, these bounds are improved to $u-1=O(\e^\N)$ in  $\{r_0/2<t<2r_0\}$ and $u-\sgn(u)=O(\e^\N)$ in  $\{-2r_0<t<-r_0/2\}$.

Let $v_H$ be the family constructed in   \cref{barriers}. The estimates on $u$ from the last paragraph, together with the hypothesis on $v_H$ (i.e. (3) \cref{barriers}), imply that $u>v_H$ in $\{|t|=r_0\}$, for all $H\in(C\e,H_\max]$. Moreover, for $\e$ sufficiently small, $\{u=0\}$ is contained in the negative region of $v_{H_\max}$. In fact, note that $H_\max$ is fixed, so by (2) \cref{barriers}, the nodal set of $v_H$ is far from $\Gamma$, while $\{u=0\}$ is converging to $\Gamma$ in the Hausdorff distance. For all $H \in (C\e,H_\max]$ the function $v_H$ is a subsolution to \cref{AC}, so, from \cref{below}, we conclude $u>v_{H_\max}$. By the maximum principle, this inequality must hold for all $H \in (C\e,H_\max]$. Therefore, $\{u=0\}\subset \{t<2C\e\}$. Repeating the argument for negative values of $t$ and working with $-u$ if necessary, we conclude that $\{u=0\}\subset \{|t|<2C\e \}.$

Blowing up $u$ in Fermi coordinates over $\Gamma$, produces entire solutions of $\Delta u-W'(u)=0$ in $\R^n$, with nodal set between two parallel planes.  \cref{gibbons} implies that these have to be one dimensional solutions with horizontal level sets. It follows that for any $R>0$, $$\lim_{\e\to 0} \int_{\{|t|<\e R\}} \e\frac{|\nabla u|^2}{2}+\frac{W(u)}{\e} \leq \sigma_0 |\Gamma|.$$ 

Moreover, from the local converge in $\{|t|<r\}$ together with the exponential decay \cref{exponentialdecaylemma}, it follows that for $R>0$ large enough we have $|\sgn(t)-u(x,t)| \leq C e^{-\sigma t/\e}$ on $\{\e R <|t|<\e^{\ds}\}$ and $u=o(\e^\N)$ in $M\setminus \{|t|<\e^{\ds}\}$. This implies $$\int_{\{\e R<|t|<\e^\ds\}} \e\frac{|\nabla u|^2}{2}+\frac{W(u)}{\e} = O( |\Gamma|\times e^{-2R})$$ and $$\int_{M\setminus \{|t|>\e^\ds\}} \e\frac{|\nabla u|^2}{2}+\frac{W(u)}{\e} = o(\e^\N).$$

Combining all the estimates, and since $R>0$ can be choosen arbitrarily large, we conclude that $$\lim_{\e\to 0} \int_{M} \e\frac{|\nabla u|^2}{2}+\frac{W(u)}{\e} \leq \sigma_0 |\Gamma|.$$ 

 \end{proof}


\section{Proof of \cref{barriers}}\label{nbarriers}

In this section we use a cut-off of $\psi(t/\e)$ which is $0$ for large values of $|t|$. To be precise, fix values $0<r<R$ to be chosen later, and denote by $\chi:\mathbb{R}\to\mathbb{R}$ a smooth function with $\chi\equiv 1$ for $|t|\leq r$, $\chi\equiv 0$ for $|t|\geq R$ and such that $\chi$ is strictly monotone on $r\leq |t|\leq R$. 

\begin{definition}\label{ndefomega} Let $\omega:\R\to\R$ be given by \begin{equation*}
    \begin{split}
        \omega(t)&=\psi(t/\e)\chi(t). 
    \end{split}
\end{equation*}
\end{definition} 

Since $\Gamma$ is strictly stable it has a local neihgborhood which is foliated by mean-convex constant mean curvature graphs. We now construct barriers adapted to these hypersurfaces.  Let $\Sigma_{H_0} \subset M$ denote 
a two-sided, connected, closed, embedded hypersurface with constant mean curvature $H_0>0$. Keeping this in mind, we work in Fermi coordinates $(x,t)$ with respect to the normal vector field $\nu$ over $\Sigma_{H_0}$, which is opposite to $\vec H(\Sigma_{H_0})$. Therefore, we have $$H_0=-\langle \vec H(\Sigma_{H_0}), \nu\rangle>0.$$ 

We also assume $R$ in \cref{ndefomega} has been chosen smaller than the maximum height of the Fermi coordinates. This allows us to think of $\omega$ as a function on $M$, by means of $\omega(x,t)=\omega(t)$. \\ 

\noindent\textbf{Notation.} \textit{Given $|c|< R$, we denote by $\{t=c\}$ the points of $M$ which have height exactly $c$ in Fermi coordinates around $\Sigma_{H_0}$. Similarly, $\{a<t<b\}$ denotes the union of all the $\{t=c\}$ with $c\in(a,b)$.}\\
 
Let $Q(v)=\e^2 \Delta v-W'(v)$. A short computation shows its linearization around $\omega$ is given by
\begin{equation}\label{nlinear}
    \begin{split}
        Q(\omega+\phi)=Q(\omega)+L_\omega(\phi)+\phi^2(\phi-3\omega)
    \end{split}
\end{equation}
where $L_\omega=\e^2\Delta-W''(\omega)$. Our goal is to find $\sigma>0$ and $\phi:M\to \mathbb{R}$ satisfying $$Q(\omega+\phi)=\e H_0 \sigma +O(\e^2)$$ in a region of the form $\{|t|\leq r\}$ and such that $\omega+\phi$ is smaller than $\operatorname{sgn}$ on the boundary $\{|t|= r\}$.

First, we rewrite \cref{nlinear} as
\begin{equation}\label{neq1}
    \begin{split}
        Q(\omega+\phi)-\e H_0\sigma=\e H_0(\psi'(t/\e)\chi-\sigma)+L_\omega(\phi)+E_1+E_2+E_3+E_4
    \end{split}
\end{equation}
where
\begin{equation*}
    \begin{split}
    E_1=&\phi^2(\phi-3\omega)\\
E_2=&-\e^2 t \omega'\int_{0}^1 \langle \partial_s \vec H(x,ts),\partial(x,ts) \rangle ds\\ 
E_3=&2\e \psi'(t/\e)\chi'+[\e^2H_0\psi(t/\e)\chi'+\e^2 \psi(t/\e)\chi'']\\
E_4=&W'(\psi(t/\e))\chi - W'(\omega).
    \end{split}
\end{equation*}

Estimating these error terms will simplify the notation later.
\begin{remark}
 Clearly, $|E_1|=O(|\phi|^2)$. Next, we have $|E_2|=O(\e^2)$. This bound follows since the function $(t/\e)\psi'(t/\e)$ has  bounded ${C^{k,\alpha}_\e(M)}$-norm. For $E_3$, the first term is $o(\e^\N)$ since $\psi'(t/\e)$ decays exponentially fast on $\operatorname{supp} \chi'=\{r\leq |t|\leq R\}$. The remaining terms are $O(\e^2) $. Therefore, $|E_3|=O(\e^2).$ Lastly, $E_4=0$ in $\{|t|\leq r\}$, since in this region we have $\chi\equiv 1$ and $\omega\equiv\psi(t/\e)$.
 
\end{remark}
Using the estimates from the remark and defining $$f=\e H_0(\psi'(t/\e)\chi-\sigma),$$ on the region on $\{|t|<r\}$ we can rewrite \cref{neq1} as
\begin{equation}\label{ndes1}
    \begin{split}
        Q(\omega+\phi)=\e H_0\sigma+f+L_\omega(\phi)+O(|\phi|^2+\e^2). 
    \end{split}
\end{equation}

\noindent\textbf{Choosing $\sigma$ and $\phi$.} First, we pick $\sigma$ so that $f\perp \psi'(t/\e)\chi$, i.e.\begin{equation}
    \begin{split}\label{nsigma}
\sigma&=\frac{\int (\psi'(t/\e)\chi)^2}{\int \psi'(t/\e)\chi}.
    \end{split}
\end{equation} 

Next, let $v_1 \in C^{2,\alpha}_\e(M)$ be the unique solution to $L_\infty v_1=-f$. We define $\phi_1=v_1^\perp$, i.e.
\begin{equation}\label{nphi1}
    \begin{split}
\phi_1&=v_1-  \bigg(\frac{\int v_1 \psi'(s/\e)\chi ds}{\int (\psi'(s/\e)\chi)^2 ds}\bigg)  \psi'(t/\e)\chi.
    \end{split}
\end{equation}
Denote $g=f+L_\omega \phi_1$ and let $0<\tilde r<r$. We define \begin{equation}\label{nphi2}
    \begin{split}
\phi_2=v_2 \chi
    \end{split}
\end{equation} where $v_2\in C^{2,\alpha}_\e(\Sigma_{H_0} \times \mathbb{R})$ is the unique solution to $L_0 v_2=-(g\tilde\chi)^\perp_0=-g\tilde\chi + \bigg(\frac{\int g\tilde\chi \psi'(s/\e) ds}{\int \psi'(s/\e)^2 ds}\bigg)\psi'(t/\e)$ with $\tilde\chi\equiv 1$ for $|t|<\tilde r$ and $\tilde \chi\equiv 0$ for $|t|>r$.  Using Fermi coordinates, we ought to interpret $\phi_2$ as belonging to $C^{2,\alpha}_\e(M)$.

Finally, from these explicit formulas, it follows that $$\phi=\phi_1+\phi_2$$ depends continuously on $H_0$. 

\

  \noindent\textbf{Estimates.} From \cref{injectivitycylinder}, \cref{coercive}, \cref{nsigma}, \cref{nphi1}, \cref{nphi2} and since $\|f\|_{C^{0,\alpha}_\e(M)}=O(\e)$, we immediately have
\begin{equation}\label{1stest}
    \begin{split}
     \|\phi_1\|_{C^{2,\alpha}_\e(M)}+\|\phi_2\|_{C^{2,\alpha}_\e(M)}+\|v_1\|_{C^{2,\alpha}_\e(M)}+\|v_2\|_{C^{2,\alpha}_\e(\Sigma_{H_0}\times \mathbb{R})}=O(\e).
     \end{split}
\end{equation} 

Our next goal is to estimate the term $f+L_\omega\phi$ in \cref{ndes1}. First, we rewrite it as
\begin{equation*}
    \begin{split}
       f+L_\omega \phi=(L_\omega -L_0)\phi_2 + (g\tilde\chi+L_0\phi_2)+g(1-\tilde\chi).
    \end{split}
\end{equation*}

Estimates for $g$ follow from the expression:
\begin{equation}\label{ng}
    \begin{split}
    g=& L_\omega \phi_1 +f\\
    =& L_\omega \phi_1 - L_\infty v_1\\
    =&\e^2\Delta (\phi_1-v_1)-W''(\omega)\phi_1+2v_1\\
    =&v_1(2-W''(\omega))-[\e^2\Delta-W''(\omega)] v_1^T\\
    =&3v_1(1-\omega^2)+[\e^2\Delta_t+\e^2\partial_t^2+\e^2 H_t\partial_t-W''(\omega)] v_1^T\\
    =&3v_1(1-\omega^2)+ \frac{\int(\e^2\Delta_t v_1)\psi'\chi}{\int (\psi'\chi)^2}\psi'\chi +  \frac{\int v_1\psi'\chi}{\int (\psi'\chi)^2} (\e^2\partial_t^2+\e^2 H_t\partial_t-W''(\omega))\psi'\chi.\\
            =&3v_1(1-\omega^2)+ \frac{\int(\e^2\Delta_t v_1)\psi'\chi}{\int (\psi'\chi)^2}\psi'\chi+ \\&+  \frac{\int v_1\psi'\chi}{\int (\psi'\chi)^2} [\e H_t\psi''\chi+\e^2H_t\psi'\chi'+ \chi(W''(\psi)-W''(\omega))\psi'+2\e\psi''\chi'+\e^2\psi'\chi''].\\
    \end{split}
\end{equation}

\begin{claim}\label{nclaim1}
There exists $C>0$, independent of $\e$, such that
$$\|g\tilde \chi\|_{C^{0,\alpha}_\e(\{|t|>s\})}\leq C\e e^{-|s|/\e}$$ for all $|s|\leq R$.
\end{claim}

\begin{proof} The bounds follow from $\|v_1\|_{C^{2,\alpha}_\e(M)}=O(\e)$ and the fact that $1-|\psi(t)|$ and all of its derivatives decay exponentially, i.e. there exists $C>0$, such that \begin{equation}\label{npsiest}
    \begin{split}\|1-|\psi(t/\e)|\|_{C^{0,\alpha}_\e(\{|t|>s\})}+\|(\partial_t^k\psi)(t/\e)|\|_{C^{0,\alpha}_\e(\{|t|>s\})}\leq Ce^{-|s|/\e} \end{split}
\end{equation} for all $k=1,2,3$. These are applied to the expansion of $g\tilde\chi$ given by \cref{ng}. Note that in $\operatorname{supp}\tilde\chi$, we have $\rho\equiv \chi\equiv 1$. So $\omega=\psi(t/\e)$ in this region. The first term is $3v_1(1-\psi(t/\e)^2)\tilde \chi$. The $C^{0,\alpha}_\e$-norm of the integrals is bounded by $\|v_1\|_{C^{2,\alpha}_\e(M)}=O(\e)$, using item (3) from \cref{nnorms}. 
\end{proof}

\begin{claim}\label{nclaimphi}
There exist $C>0$, independent of $\e$, such that $$\| \phi_2\|_{C^{0,\alpha}_\e(\{|t|\geq s\})}\leq C\e e^{-|s|/\e}$$ for all $|s|\leq R$.
\end{claim}

\begin{proof}
There exists a constant $s_0>0$ such that $W''(\psi(t/\e))\geq 1$ for all $t\geq s_0\e$. In addition, from \cref{nclaim1} we have $$\|L_0 v_2\|_{C^{0,\alpha}_\e(\{|t|>s\})}=\|g\tilde \chi^\perp\|_{C^{0,\alpha}_\e(\{|t|>s\})}\leq C\e e^{-|s|/\e}.$$  Therefore, we can apply \cref{exponentialdecaylemma} to both $v_2$ and $-v_2$, with $L_\e=L_0$, $c_0=1$, $a=C\e e^{-|s|/\e}$ and $\Omega=\{|t|>\e s_0\}$, to get $$\|v_2\|_{L^{\infty}(\{|t|>s\})}\leq C \e e^{-|s|/\e}$$ for all $s>\e s_0$, where we redefine $C$ if necessary. By Schauder estimates, the same bound holds for the $C^{2,\alpha}_\e$-norm of $v_2$. This implies the claim since $\phi_2=v_2\chi$.
\end{proof}

\begin{claim}
$\|(L_\omega-L_0)\phi_2\|_{L^\infty(M)}=O(\e^2)$.
\end{claim}

\begin{proof}
Rewrite the expression as $(L_\omega - L_0)\phi_2 = \e^2(\Delta_t - \Delta_0) \phi_2 + \e^2H_t\phi_2'+(W''(\psi(t/\e))-W''(\omega))\phi_2.$
We estimate each term separately. From the proof of item (4) in \cref{nnorms} and \cref{nclaimphi}
\begin{equation*}
    \begin{split}
   |\e^2(\Delta_t-\Delta_0)\phi_2|=&O(|t\partial_{ij}\phi_2|+ |t\partial_{i}\phi_2|)
   =O(\e\times |t/\e|\times (\e e^{-|t|/\e}))=O(\e^2).
    \end{split}
\end{equation*}
Next $|\e^2 H_t\phi'_2|=O(\e \|\phi_2\|_{C^1_\e(M)})=O(\e^2).$ Finally, $\operatorname{supp}[W''(\psi(t/\e))-W''(\omega)]\subset \{|t|>r\}$. From \cref{nclaimphi} in this region $|\phi_2|=O(\e e^{-r/\e})=o(\e^\N).$ Therefore $|(W''(\psi(t/\e))-W''(\omega))\phi_2|=o(\e^\N).$ 
\end{proof}

\begin{claim}
$\|g\tilde\chi+L_0\phi_2\|_{L^\infty(M)}=O(\e^2)$
\end{claim}

\begin{proof} Consider the expansion
\begin{equation*} 
    \begin{split}
    g\tilde\chi+L_0\phi_2&=g\tilde\chi+L_0(v_2\chi)\\
    &=g\tilde\chi + (L_0v_2)\chi +\e^2(2v_2'\chi'+v_2\chi'')\\
    &=g\tilde\chi - (g\tilde\chi)_0^\perp \chi+\e^2(2v_2'\chi'+v_2\chi'')\\
    &=g\tilde\chi(1-\chi)+ (g\tilde\chi)_0^T\chi+\e^2(2v_2'\chi'+v_2\chi'').
    \end{split}
\end{equation*}
The first term vanishes since $\tilde \chi(1-\chi)\equiv 0$. For the last term, we have $|\e^2(2v_2'\chi'+v_2\chi'')|=O(\e\|\phi_2\|_{C^{1}_\e(M)}+\e^2\|\phi_2\|_{L^\infty(M)})=O(\e^2).$ For the second term, we first need to estimate
\begin{equation*}
    \begin{split}
  \int \tilde \chi \psi'(t/\e) L_\omega \phi_1=& \int  \tilde \chi \psi'(t/\e) (\e^2\Delta_t+\e^2\partial_t+\e^2 H_t\partial_t-W''(\omega))\phi_1\\
  =& \int  \tilde \chi \psi'(t/\e) \e^2(\Delta_t-\Delta_0)\phi_1+\e^2 \Delta_0 \int \phi_1\chi\psi'(t/\e) \\&+ \int (\e^2\Delta_0\phi_1)(\tilde \chi-\chi)\psi'(t/\e)+\int \e^2 H_t(\partial_t\phi_1)\tilde\chi\psi'(t/\e)\\
  &+\int \psi'(t/\e)\tilde\chi(\e^2\partial_t^2-W''(\omega))\phi_1.
    \end{split}
\end{equation*}
By the proof of item (4) from \cref{nnorms}, the first term is bounded by $\e\|\phi_1\|_{C^{2,\alpha}_\e(M)}\int|t/\e\psi'(t/\e)|=O(\e^3)$. The second term is zero by the definition of $\phi_1$. The third term is $o(\e^\N)$ since $\e^2\Delta_0\phi_1$ is bounded, $\psi'(t/\e)$ decays exponentially and $\operatorname{supp}(\tilde\chi-\chi)$ is far from the origin. The fourth term is bounded by $O(\e\|\phi_1\|_{C^{1,\alpha}_\e(M)}\int \psi'(t/\e))=O(\e^3)$. Since $\omega=\psi(t/\e)$ in $\operatorname{supp}\tilde \chi$, the last term is $\int \psi'(t/\e)\tilde\chi \ell_0\phi_1$. Integrating by parts, using $\ell_0 \psi(t/\e)=0$ and arguing as we did for the third term, we see this term is also $o(\e^\N)$. In other words 

$$\int \tilde \chi \psi'(t/\e) L_\omega \phi_1=O(\e^3).$$

Finally, we have
\begin{equation*}
    \begin{split}
    \int g\tilde \chi\psi'(t/\e)&=\int (L_\omega \phi_1+f)\tilde\chi \psi'(t/\e)\\
    &=\int (L_\omega \phi_1)\tilde\chi \psi'(t/\e)+ \int f\chi \psi'(t/\e)+\int f(\tilde\chi-\chi)\psi'(t/\e)\\
    &=\int (L_\omega \phi_1)\tilde \chi \psi'(t/\e)+o(\e^\N)\\
    &=O(\e^3)
    \end{split}
\end{equation*}
where the second term on the second line is zero by the definition of $\sigma$ and the third term is $o(\e^{\N})$ arguing similarly as the previous estimate. This implies $\|(g\tilde\chi)^T_0\|_{L^\infty(\Sigma\times \mathbb{R})}=O(\e^2)$ from which the claim follows. 
\end{proof}

\begin{claim}
  $g(1-\tilde \chi)=O(\e^\N)$ in $\{|t|<r\}$.
\end{claim}

\begin{proof}
Remember $\operatorname{supp}(1-\tilde \chi)=\{t \leq -\tilde r\}\cup \{\tilde r \leq t \}$. So from \cref{ng}, and as in \cref{nclaim1}, we can write $
g(1-\tilde \chi)=3v_1(1-\omega^2)(1-\tilde \chi)+o(\e^\N).$ By construction, $1-|\omega|=1-|\psi(t/\e)|=o(\e^\N)$ on $\{\tilde r\leq |t| \leq r\}$. Since $v_1$ is bounded, this implies $g(1-\tilde \chi)=o(\e^\N)$ in $\{|t|\leq r\}$. 
\end{proof}

Summarizing, we have shown that $f+L_\omega \phi=O(\e^2)$ on $\{|t|<r\}.$ Substituting into \cref{ndes1}, and since the convergence of $\lim_{\e\to 0}\sigma=\sigma_0=\frac{\int \psi'(t)}{\int \psi'(t)^2}>0$ is exponentially fast, we see that, for our choices of $\sigma$ and $\phi$, $$Q(\omega+\phi)=\e H_0 \sigma_0 + O(\e^2) \text{ on } \{|t|<r\}.$$  This already gives us ranges for $H_0$ and $\e$, where $Q(\omega+\phi)>0$. However, in order to control the boundary conditions for our barrier argument, we need slightly more room in our inequality.

\begin{claim}
There exist $C>0$, $\lambda>0$ and $\e_0>0$, such that \begin{equation}\label{newineq}
    Q(\omega+\phi)-\e^2\lambda>0 \text{ on } \{|t|<r\}
\end{equation} for all $\e\in(0, \e_0]$ and $H_0\in[C\e,H_\max]$.
\end{claim}

\begin{proof}
There exists $C>0$, such that the right hand side of $Q(\omega+\phi)-\e\frac{H_0\sigma}{2}=\e\frac{H_0\sigma}{2}+O(\e^2)$, is strictly positive as long as $C\e\leq  H_0\leq H_\max$ and $\e$ is small enough. If we denote $\lambda=\frac{\sigma C}{2}$, then $\e^2\lambda \leq \e\frac{H_0\sigma}{2}$ and the inequality follows.
\end{proof}

Finally, we are ready to estimate the boundary conditions. To simplify the notation, let $v=\omega+\phi$ and $c=\e^2\lambda$. Note that $Q(v)-c = \e^2\Delta v -(W'(v)+c)>0$. The idea is similar to that of \cref{expocoro}. On one hand, because of \cref{newineq}, $v=\omega+\phi$ accumulates near $v_-$ and $v_+$, where $v_-<v_0<v_+$ are the roots of the polynomial $W'(v)+c=(v-v_-)(v-v_0)(v-v_+)$. On the other hand, these roots are distanced from $-1$ and $1$ by a fixed amount. In fact, a simple computation shows that if $c>0$ is small enough there exists $k>0$, independent of $c$, such that: 
\begin{equation}\label{nroots}
    \begin{cases}
v_-<-1-kc\\
0<v_0<kc\\
0<v_+<1-kc.
\end{cases}
\end{equation}

\begin{claim}
$v<\operatorname{sgn}-k\e^2\lambda+o(\e^\N)$ on  $\{r/3<|t|<r/2\}$, for all $H_0\in [C\e,H_\max]$.
\end{claim}

\begin{proof}
By construction, $v=\omega+\phi=\psi(t/\e)+O(\e)$ in $\{|t|<r\}$. In particular, $v=1+O(\e)$ in $\{r/4<t<r\}$. Together with \cref{nroots} this implies $v-v_0>1-kc+O(\e)=1+O(\e)$ and $v-v_->[1+O(\e)]+[1+kc]=2+O(\e)$, since $kc=O(\e^2)$. Therefore, $$\e\Delta (v-v_+)-c_+(v-v_+)\geq 0 \text{ on } \{r/4<t<r\}$$ where $c_+=\frac{W'(v)+c}{v-v_+}=(v-v_-)(v-v_0)=2+O(\e)>1$, if $\e$ is small enough. \cref{exponentialdecaylemma}  then implies $ v-v_+<o(\e^\N)$ in $\{r/3<t<r/2\}$. 

Similarly, $v=-1+O(\e)$ in $\{-r<t<-r/4\}$. Together with \cref{nroots} this implies $v-v_0<-1+O(\e)$ and $v-v_+<-1+O(\e)$. Therefore, $$\e\Delta (v-v_-)-c_-(v-v_-)\geq 0 \text{ on } \{r/4<t<r\}$$ where $c_-=\frac{W'(v)+c}{v-v_-}=(v-v_+)(v-v_0)=1+O(\e)>1/2$, if $\e$ is small enough. \cref{exponentialdecaylemma} then implies $v-v_-<o(\e^\N)$ in $\{-r/2<t<r/3\}$.
\end{proof} 

Combining both inequalities with \cref{nroots} finishes the proof of \cref{barriers}.




\section{Curvature Estimates for multiplicity one solutions}\label{curvature}

In this section, we present the following curvature estimates 

\begin{lemma}\label{curvatureestimates}
Let $\Gamma \subset M$ be a non-degenerate minimal hypersurface, which is also the limit interface for a sequence of solutions to \cref{AC} with multiplicity one. Then, for $\e=\e(\Gamma,M)$ small enough, the nodal set of the solutions is a normal graph $\Gamma(f)$, with $$\|f\|_{C^{2}(\Gamma)}+\e^{\alpha}\|f\|_{C^{2,\alpha}(\Gamma)}=O(\e)$$ for some $\alpha\in(0,1)$.
\end{lemma}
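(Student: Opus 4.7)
The plan is to combine the curvature estimates of Wang and Wang--Wei (applicable under the multiplicity-one assumption) with a Lyapunov--Schmidt-type reduction in Fermi coordinates over $\Gamma$, closing the estimate using the non-degeneracy of the Jacobi operator $J$.

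\textbf{Phase 1: coarse graph representation.} The sheet curvature estimates of \cite{Wang,WangWei} apply in the multiplicity-one regime and produce, on any Euclidean ball of fixed size, $C^{2,\alpha}$ bounds for the nodal set of $u_k$ independent of $\e$. Combined with the Hausdorff convergence to the smooth hypersurface $\Gamma$, a blow-up/contradiction argument yields, for $k$ large, a representation of $\{u_k=0\}$ as a \emph{single}-sheeted normal graph $\Gamma(f_k)$ with $\|f_k\|_{C^{2,\alpha}(\Gamma)}=o(1)$. Single-sheetedness is where multiplicity one is used decisively: two asymptotically separated sheets would blow up to an entire Allen--Cahn solution whose nodal set is confined to a slab, which by Theorem~\ref{gibbons} must be one-dimensional, and a comparison with the multiplicity-one energy would produce a contradiction.

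\textbf{Phase 2: upgrade to the rate $O(\e)$.} Work in Fermi coordinates $(x,t)$ over $\Gamma$ and write
\begin{equation*}
u(x,t)=\psi\!\left(\tfrac{t-f(x)}{\e}\right)+\phi(x,t),
\end{equation*}
where $\phi$ is normalized by the fibrewise orthogonality $\int_\R \phi(x,s)\,\psi'\!\big(\tfrac{s-f(x)}{\e}\big)\,ds=0$ for each $x\in\Gamma$. Substituting into \eqref{AC}, using the decomposition $\Delta_g=\Delta_t+\partial_t^2-H_t\partial_t$, and expanding the mean curvature of the graph as $H_{\Gamma(f)}=Jf+\mathcal{N}(f)$ with $\mathcal{N}(f)$ quadratic in $(f,\nabla f,\nabla^2 f)$, the $L^2$ projection onto the approximate kernel $\psi'((t-f)/\e)$ yields a reduced equation of the schematic form
\begin{equation*}
\sigma_1\,\e\,Jf \;=\; \mathcal{E}(f,\phi,\e).
\end{equation*}
The orthogonal projection, via Proposition~\ref{injectivitycylinder}, gives $\|\phi\|_{C^{2,\alpha}_\e}=O(\e^2)+O(\e)\,\|f\|_{C^{2,\alpha}(\Gamma)}$. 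Feeding this back into the reduced equation shows $\mathcal{E}=O(\e^2)+O(\e)\,\|f\|^2_{C^{2,\alpha}(\Gamma)}$, and inverting the non-degenerate operator $J$ on $C^{2,\alpha}(\Gamma)$ produces the desired bound $\|f\|_{C^{2,\alpha}(\Gamma)}=O(\e)$, which is equivalent to the statement of the lemma once the H\"older weights are unravelled.

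\textbf{Main obstacle.} The delicate point is the bookkeeping in the reduction: derivatives of $\psi((t-f(x))/\e)$ in $x$ and $t$ carry factors of $\e^{-1}$ coming from $\psi'$ and $\psi''$, and these must be absorbed either into the fibrewise $\phi$-orthogonality (which annihilates the leading $\psi'$-mode appearing in $\mathcal{E}$) or into the rescaled H\"older norms from Definition~\ref{cke}. The coarse bound $\|f\|_{C^{2,\alpha}(\Gamma)}=o(1)$ from Phase 1 is essential here: it ensures that $\mathcal{N}(f)$ is a genuine quadratic perturbation, so that the leading behaviour of the reduced equation is governed by the invertible Jacobi operator $J$ and the contraction/bootstrap argument closes.
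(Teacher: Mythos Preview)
Your Phase~1 is essentially what the paper does in Claims~1--3: one-dimensional blow-ups (via Theorem~\ref{wangentire}), point-picking to get a uniform bound on the second fundamental form, and then Theorem~\ref{wangwei} to obtain a $C^{2,\alpha}$ graph representation $\{u=0\}=\Gamma(f)$ with $\|f\|_{C^{2,\alpha}}=o(1)$.

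Your Phase~2, however, takes a substantially longer route than the paper's, and it has a gap. The paper simply observes that Theorem~\ref{wangwei} gives \emph{more} than a $C^{2,\alpha}$ bound: it yields $|H_{\Gamma(f)}|_{C^0}+\e^\alpha|H_{\Gamma(f)}|_{C^{0,\alpha}}=O(\e)$ directly. Since $H_{\Gamma(f)}=Jf+\mathcal{N}(f)$ with $\mathcal{N}$ quadratic and $J$ invertible (this is exactly the non-degeneracy hypothesis), one reads off $\|f\|_{C^2}+\e^\alpha\|f\|_{C^{2,\alpha}}=O(\e)$ in one line. You cite Wang--Wei in Phase~1 but only extract the $o(1)$ bound from it, leaving the $O(\e)$ upgrade to a separate Lyapunov--Schmidt reduction that is not needed.

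The reduction you sketch in Phase~2 is in fact what the paper carries out in Section~\ref{uniqueness}, \emph{after} Lemma~\ref{curvatureestimates} has already been established and used as input (see Proposition~\ref{hapriori} and Remark~\ref{localconv}). Two issues: first, the step ``via Proposition~\ref{injectivitycylinder}, $\|\phi\|_{C^{2,\alpha}_\e}=O(\e^2)+O(\e)\|f\|_{C^{2,\alpha}}$'' is not justified as written, since Proposition~\ref{injectivitycylinder} is a statement on the product cylinder $\Gamma\times\R$, not on $M$; the operator $\e^2\Delta_g-W''(\omega_\xi)$ differs from $L_0$ by geometric terms that must be controlled. The paper does not invoke Proposition~\ref{injectivitycylinder} for this; it instead derives a differential inequality for $V(x)=\int\phi(x,t)^2\,dt$ from the one-dimensional spectral gap (Lemma~\ref{linearizedapprox}), couples it with exponential decay away from $\Gamma$ (Lemma~\ref{exponentialdecaylemma}), and closes with Schauder estimates. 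Second, since Section~\ref{uniqueness} as written assumes Lemma~\ref{curvatureestimates}, if you really want to use the reduction to \emph{prove} Lemma~\ref{curvatureestimates} you would need to check that the bootstrap runs starting only from $\|f\|_{C^{2,\alpha}}=o(1)$. That is plausible but not automatic, and in any case unnecessary once you notice Theorem~\ref{wangwei} already hands you $H=O(\e)$.
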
 As in \cite{Mantoulidis,ChodoshMantoulidis}  these estimates are derived from the work of Wang and Wang-Wei, combined with a standard point-picking and blow-up argument.

In the computations below, we rely on the following two theorems which were proven for the case of $\R^n$ in \cite{Wang} and \cite{WangWei}, respectively. The proof for general ambient manifolds with bounded curvature tensor, follows the same strategy with minor modifications.

\begin{theorem}[K. Wang, see \cite{Wang}]\label{wanglocal} Let $M$ be a closed Riemannian manifold and $R_0\in (0,\operatorname{inj}(M))$. There are positive numbers $\e_0, \tau_0,\alpha_0\in (0,1)$, $r_0\in(0,R_0)$ and $K_0$, such that the following holds. If $u$ is a solution of \cref{AC} on $B_{R_0}(p)\subset M$, with $\e<\e_0$, $p\in M$, $u(p)=0$, and $$R_0^{-n}\int_{B_{R_0}}\e \frac{|\nabla u|^2}{2}+\frac{W(u)}{\e}\leq (1+\tau_0)\omega_n \sigma_0,$$ then there exists a hyperplane $\pi\subset T_p M$, such that  $\{u=0\}\cap \exp_p(B_{r_0,\pi}(0)\times[-r_0,r_0])$ is a normal graph (with respect to Fermi coordinates on $\pi$) over the ball  $B_{r_0,\pi}(0)\subset \pi$. Moreover, the $C^{1,\alpha_0}(B_{r_0,\pi}(0))$ norm of this graph is bounded by $K_0$.
\end{theorem}

\begin{theorem}[Wang-Wei, see Section 15 of \cite{WangWei}]\label{wangwei} Let $M$ be a closed Riemannian manifold. Let $u_i: B_R(p_i)\to \R$ be a sequence of solutions to \cref{AC} with $\e=\e_i\to 0$. Assume that \begin{enumerate}
\item [i)] $\{u_i=0\}$ is, in exponential coordinates, a normal graph over a hyperplane $\pi_i\subset T_{p_i}B_R$, with Lipschitz constant uniformly bounded on $i$ and converging to a smooth hypersurface as $i\to 0$.
\item  [ii)] For any $q_i\in \{u_i=0\}$ the blow-ups $\tilde u_i(x)=u_i \circ \exp_{q_i}(\e x)$ converge to a one dimensional solution in $\R^{n+1}$, and
\item  [iii)] The second fundamental form of $\{u_i=0\}$ is  bounded uniformly on $i$.
\end{enumerate}
Then, on a smaller ball $B_r\subset B_R$,  the mean curvature of $\{u_i=0\}$ satisfies $$|H|_{C^{0}(\pi_i)}+\e^\alpha |H|_{C^{0,\alpha}(\pi_i)}=O(\e).$$ Moreover, the $C^{2,\alpha}$ norm of $\{u_i=0\}$ a graph over $\pi$ is bounded.
\end{theorem}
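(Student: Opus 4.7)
The strategy follows Wang--Wei, adapted from $\R^n$ to a Riemannian ambient by using the bounded geometry of $M$ to control background terms. I would work in Fermi coordinates $(x,t)$ over the nodal set $\Sigma_i := \{u_i = 0\}$, whose validity on a tubular neighborhood of width $\delta = O(\e^{\ds})$ is guaranteed by hypotheses (i), (iii), and the bounded curvature of $M$. The core is a Lyapunov--Schmidt decomposition against the kernel of the one-dimensional linearized operator $\ell_0$: this produces an equation coupling $H_{\Sigma_i}$ to a small correction $\varphi_i$, from which the claimed estimates are extracted.

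Concretely, write $u_i(x,t) = \psi(t/\e) + \varphi_i(x,t)$, so that $\varphi_i(x,0) \equiv 0$ by construction. Using $\Delta = \partial_t^2 + \Delta_t - H_t\partial_t$ and cancelling the one-dimensional equation for $\psi(t/\e)$, the Allen--Cahn equation \eqref{AC} rewrites as
$$\ell_0(\varphi_i) + \e^2\Delta_t\varphi_i - \e^2 H_t\partial_t\varphi_i - N(\varphi_i) = \e H_t\,\psi'(t/\e),$$
where $H_t$ is the mean curvature of the parallel hypersurface $\Sigma_i(t)$ and $N(\varphi_i) = W'(\psi+\varphi_i)-W'(\psi)-W''(\psi)\varphi_i$. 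Multiplying by $\psi'(t/\e)$ and integrating in $t \in (-\delta,\delta)$, the self-adjointness of $\ell_0$ and $\ell_0(\psi'(t/\e))=0$ eliminate the principal left-hand term modulo exponentially small boundary contributions (controlled by Corollary \ref{expocoro}). Taylor-expanding $H_t$ in $t$ and using the evenness of $\psi'(t/\e)^2$ produces a scalar identity of the form
$$\e^2\sigma_2\,H_{\Sigma_i}(x) = \int_{-\delta}^{\delta}\bigl[\text{terms in }\varphi_i,\nabla_x\varphi_i,\partial_t\varphi_i,N(\varphi_i)\bigr]\,\psi'(t/\e)\,dt + o(\e^\N).$$
If $\|\varphi_i\|_{C^{2,\alpha}_\e}$ is small on the support of $\psi'(t/\e)$, this immediately gives $|H_{\Sigma_i}|_{C^0}=O(\e)$; applying the same procedure to Hölder difference quotients in $x$ yields $\e^\alpha|H_{\Sigma_i}|_{C^{0,\alpha}}=O(\e)$.

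The crux is then to prove $\|\varphi_i\|_{C^{2,\alpha}_\e}\to 0$ on this slice. I would argue by contradiction and blow-up: assume failure along a subsequence, pick $q_i\in\Sigma_i$ (nearly) attaining the supremum, and rescale by $\e_i^{-1}$ about $q_i$. Hypothesis (ii) forces $\tilde u_i \to \pm\psi(t)$ in $C^2_{\mathrm{loc}}(\R^{n+1})$, and subtraction yields a bounded limit $\varphi_\infty$ solving the linearized operator $\partial_t^2+\Delta_x-W''(\psi(t))$ on $\R^{n+1}$, with $\varphi_\infty(x,0)\equiv 0$ by construction. A Fourier decomposition in the tangential variable reduces this to ODEs in $t$ at each frequency $k$: the zeroth mode is killed because the bounded kernel of $\ell_0$ on $\R$ is spanned by $\psi'$ (Lemma \ref{spectralgap}) and $\psi'(0)\neq 0$, while for $k\neq 0$ the spectral shift by $-|k|^2$ renders the operator strictly negative, leaving only the trivial bounded solution. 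This forces $\varphi_\infty\equiv 0$ and contradicts the normalization at $q_i$.

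With $\varphi_i$ controlled, the mean-curvature identity yields the claimed $O(\e)$ bounds, and Schauder estimates (Theorem \ref{Schauder}) applied to the quasilinear PDE satisfied by the graphing function over $\pi_i$ upgrade (i) to the full $C^{2,\alpha}$ graph bound. The main obstacle is making the Liouville step rigorous: one must ensure that $\tilde\varphi_i$ has uniform $C^{2,\alpha}$ control at the blow-up scale so that the limit and its derivatives are regular enough for the Fourier argument, and that the rescaled nodal set converges smoothly to a hyperplane. This is precisely where Wang's local curvature estimate (Theorem \ref{wanglocal}) enters: at the rescaled level it flattens the blow-up nodal set into a hyperplane and aligns tangential coordinates with the kernel direction, supplying the required regularity of $\Sigma_i$ in Fermi coordinates.
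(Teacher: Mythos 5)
First, a point of comparison: the paper does not actually prove this statement; it is quoted from Section 15 of \cite{WangWei} with the remark that the extension from $\R^n$ to a closed manifold requires only minor modifications. So your proposal should be judged as a reconstruction of the Wang--Wei argument. Its architecture is the right one (Fermi coordinates over the nodal set, Lyapunov--Schmidt projection of the equation against $\psi'(t/\e)$, a blow-up/Liouville step to control the remainder), but there is a genuine quantitative gap at the decisive step.

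The gap is this: your compactness-and-contradiction argument can only yield $\|\varphi_i\|_{C^{2,\alpha}_\e}=o(1)$, and $o(1)$ smallness of $\varphi_i$ is far too weak to extract $|H|=O(\e)$ from the projected identity. Look at the scalings in your own display: the left-hand side is $\e^2\sigma_2 H(x)$, so to conclude $H=O(\e)$ you must show the right-hand side is $O(\e^3)$. But the quadratic term contributes $\int N(\varphi_i)\psi'(t/\e)\,dt=O\bigl(\e\|\varphi_i\|_{L^\infty}^2\bigr)$, which is $O(\e^3)$ only if $\|\varphi_i\|_{L^\infty}=O(\e)$; the tangential term contributes $\e^2\Delta_x\!\int\varphi_i\psi'\,dt$, which requires quantitative second-derivative control of the kernel component of $\varphi_i$; and with your normalization $\varphi_i(x,0)\equiv 0$ (rather than orthogonality $\int\varphi_i\,\omega'_\xi\,dt=0$, as used in Section \ref{uniqueness} of this paper) the kernel component is not removed from the projected equation at all. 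The actual content of the Wang--Wei estimate is precisely the bootstrap that closes this loop: $\varphi_i$ (orthogonal to the kernel) satisfies an invertible equation whose source involves $H$ and quadratic errors, $H$ satisfies the projected equation whose source involves $\varphi_i$, and one must iterate these two estimates to upgrade $o(1)$ to the $O(\e^2)$-type bounds that make the projected identity yield $|H|=O(\e)$. A soft Liouville argument cannot produce a rate, so as written the proof does not close. (Two smaller issues: the ``Fourier decomposition in the tangential variable'' is not available on noncompact $\R^n$ --- the nondegeneracy of the heteroclinic, i.e.\ that bounded kernel elements of $\Delta-W''(\psi(t))$ on $\R^{n+1}$ are multiples of $\psi'(t)$, needs a genuine argument such as the stability of $\psi$ applied to the ratio $\varphi_\infty/\psi'$; and the H\"older estimate $\e^\alpha|H|_{C^{0,\alpha}}=O(\e)$ requires carrying the weighted H\"older seminorms through the entire iteration, not merely ``applying the same procedure to difference quotients.'')
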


\

\begin{proof}[\textbf{Proof of \cref{curvatureestimates}}]

\setcounter{claim}{0}

\begin{claim}For $\e$ small enough, the nodal set $\{u=0\}$ is an embedded hypersurface and the second fundamental form of $u$ near $\{u=0\}$ is of order $o(\e^{-1})$.\end{claim}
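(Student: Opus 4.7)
The plan is to treat the two assertions separately: embeddedness of $\{u=0\}$ will follow from Wang's local graph theorem (Theorem \ref{wanglocal}), and the $o(\e^{-1})$ bound on the generalized second fundamental form will come from a blow-up and point-picking argument together with Wang's classification of entire solutions (Theorem \ref{wangentire}).

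For embeddedness, I would fix a point $p\in\Gamma$. The multiplicity one hypothesis means that the energy measures
$$\mu_\e := \Bigl(\e\frac{|\nabla u|^2}{2}+\frac{W(u)}{\e}\Bigr)\,d\mathrm{vol}_M$$
converge as Radon measures to $\sigma_0\,\hm^{n-1}\llcorner\Gamma$. For the specific radius $R_0$ supplied by Theorem \ref{wanglocal}, the smoothness of $\Gamma$ at $p$ combined with this convergence forces the scale-invariant ratio $R_0^{-n}\mu_\e(B_{R_0}(p))$ to lie below the Wang threshold uniformly in $p\in\Gamma$ once $\e$ is small enough. Then Theorem \ref{wanglocal} gives $\{u=0\}\cap B_{r_0}(p)$ as a $C^{1,\alpha_0}$ normal graph over a hyperplane in $T_pM$, with $C^{1,\alpha_0}$ norm bounded by $K_0$. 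By Theorem \ref{zerolevelset} the nodal set lies inside an $O(\e)$-tubular neighborhood of $\Gamma$, so these local graphs patch together, yielding a single embedded hypersurface close to $\Gamma$.

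For the curvature bound I argue by contradiction. Suppose that along a subsequence $\e_i\to 0$ there exist points $q_i\in\{u_i=0\}$ with $\e_i|A_i(q_i)|\geq c>0$, where $A_i$ is the second fundamental form of the nodal set. A standard point-picking procedure lets me assume that $|A_i|\leq 2|A_i(q_i)|$ on balls around $q_i$ whose radius, measured in units of $|A_i(q_i)|^{-1}$, tends to infinity. Setting $\tilde u_i(y):=u_i(\exp_{q_i}(\e_i y))$, each $\tilde u_i$ solves \eqref{AC} with parameter $1$ on a domain that exhausts $\R^n$. Interior elliptic estimates yield, along a subsequence, a $C^{2,\alpha}_{\mathrm{loc}}$ limit to an entire bounded solution $U$ on $\R^n$ whose nodal set has second fundamental form of norm at least $c$ at the origin.

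It remains to show $U$ is one dimensional, which will contradict $|A_U(0)|\geq c$. To do this I apply Theorem \ref{wangentire} to $U$: the weak-$*$ convergence $\mu_{\e_i}\rightharpoonup \sigma_0\hm^{n-1}\llcorner\Gamma$ combined with the smoothness of $\Gamma$ at the accumulation point of $q_i$ shows that the energy ratios of $\tilde u_i$ at any fixed scale $R$ converge to the Euclidean value for a flat hyperplane through the origin, meeting precisely the Wang entire threshold. Hence $U$ is one dimensional, its level sets are parallel hyperplanes, and its generalized second fundamental form vanishes identically — contradicting the lower bound at the origin. The principal obstacle, and the main technical step, is passing the multiplicity one hypothesis through the blow-up with the correct quantitative energy-ratio bound required by Theorem \ref{wangentire}; this step hinges on the weak-$*$ convergence of the energy measures together with the uniform smoothness of $\Gamma$ at every base point.
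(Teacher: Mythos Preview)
Your overall strategy matches the paper's --- blow up by $\e$ and classify the limit via Wang's entire theorem --- but there are two issues worth flagging. First, your citation of Theorem \ref{zerolevelset} is misplaced: that result assumes $\Gamma$ is strictly stable, whereas the present claim lives inside the proof of Lemma \ref{curvatureestimates}, which only assumes $\Gamma$ is non-degenerate (possibly unstable). You do not actually need the $O(\e)$ neighborhood here; the local $C^{1,\alpha}$ graph conclusion from Theorem \ref{wanglocal} together with Hausdorff convergence of $\{u=0\}$ to $\Gamma$ already forces a single embedded hypersurface.

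Second, the step you correctly identify as the principal obstacle cannot be carried out with weak-$*$ convergence and smoothness of $\Gamma$ alone. Weak-$*$ convergence controls $\mu_{\e_i}$ on balls of \emph{fixed} radius, not on the shrinking balls $B_{\e_i R}(q_i)$ that feed the blow-up. The missing ingredient is the monotonicity formula for the Allen--Cahn energy, which bounds the ratio at scale $\e_i R$ by the ratio at a fixed small scale $r_0$; at scale $r_0$ your weak-$*$ plus smoothness argument then applies. The paper names precisely this tool. Finally, your point-picking step is unnecessary: once the $\e$-blow-ups converge in $C^2_{\mathrm{loc}}$ to a one-dimensional solution, both $\e|\nabla u|\neq 0$ (hence embeddedness) and $\e|A|\to 0$ follow directly from that smooth convergence and the implicit function theorem. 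This is how the paper obtains both conclusions from a single blow-up, whereas you split them into two separate arguments.
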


By the monotonicity formula, $\e$-rescalings of multiplicity one solutions centered at the nodal set, have multiplicity one at infinity and therefore are one dimensional by  \cref{wangentire}. This implies $|\nabla u|\neq 0$ on the nodal set, if $\e$ is small enough. The estimate on the second fundamental form follows from the smoothness of the convergence of the rescalings to the 1-D solution, which has planar level sets.

\begin{claim} For $\e$ small enough, the nodal set is a normal graph $\Gamma(f)=\{u=0\}$ for some $f\in C^{\infty}(\Gamma)$. Moreover, the Lipschitz norm of $f$ is uniform on $\e$.\end{claim}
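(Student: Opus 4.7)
The plan is to upgrade the local regularity provided by Theorem \ref{wanglocal} to a global graph over $\Gamma$, using multiplicity one to verify the energy hypothesis and Hausdorff convergence to align the local tangent planes with those of $\Gamma$.

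First, I would fix $p\in \Gamma$ and verify the hypothesis of Theorem \ref{wanglocal} on $B_{R_0}(p)$. By the multiplicity one assumption together with the energy estimate from Theorem \ref{zerolevelset} (and the Hutchinson--Tonegawa concentration of energy on $\Gamma$ with integer multiplicities), for every sufficiently small $\e$ we have
\[
R_0^{-n}\int_{B_{R_0}(p)}\e\frac{|\nabla u|^2}{2}+\frac{W(u)}{\e}\;\leq\;(1+\tau_0)\,\omega_n\,\sigma_0,
\]
since the left-hand side tends to $\sigma_0\cdot \omega_n$ (the multiplicity being exactly one on $\Gamma$). Therefore Theorem \ref{wanglocal} applies at every $p\in\Gamma$: there exist $r_0>0$, $\alpha_0\in(0,1)$ and $K_0>0$, all independent of $p$ and $\e$, such that $\{u=0\}\cap B_{r_0}(p)$ is a $C^{1,\alpha_0}$ normal graph over some hyperplane $\pi_p\subset T_pM$ with $C^{1,\alpha_0}$-norm bounded by $K_0$.

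Next I would show that $\pi_p$ is uniformly close to $T_p\Gamma$. Since $\{u=0\}$ converges to $\Gamma$ in Hausdorff distance, say with error $\eta_\e\to 0$, the $K_0$-Lipschitz graph over $\pi_p$ cannot deviate from a tubular neighborhood of $\Gamma$ of width $\eta_\e$ on the ball $B_{r_0}(p)$: if the angle $\theta$ between $\pi_p$ and $T_p\Gamma$ were bounded away from zero, the graph would reach points at distance $\gtrsim r_0 \sin\theta$ from $\Gamma$, contradicting Hausdorff convergence for $\e$ small. Consequently the angles tend to zero uniformly in $p$, and in Fermi coordinates $(x,t)$ over $\Gamma$ the local graph over $\pi_p$ can be reparametrized as $t=f_p(x)$, with uniformly bounded Lipschitz norm by the Implicit Function Theorem applied to the (nearly horizontal) $\pi_p$-graph. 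Since the nodal set is embedded (first claim) and each vertical fiber of the Fermi projection meets $\{u=0\}$ in exactly one point (multiplicity one would be violated otherwise, and the $f_p$ agree on overlaps by construction), the local graphs $f_p$ patch to a globally defined $f:\Gamma\to(-\tau,\tau)$ with $\{u=0\}=\Gamma(f)$ and $\|f\|_{\mathrm{Lip}(\Gamma)}=O(1)$.

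Finally, smoothness of $f$ follows from the Implicit Function Theorem applied to $u$ in Fermi coordinates. By the first claim, $\e|\nabla u|$ is bounded away from zero on $\{u=0\}$, and since the nodal tangent planes are uniformly close to those of $\Gamma$, the normal derivative $\partial_t u$ is also bounded away from zero on $\{u=0\}$. Hence $f\in C^\infty(\Gamma)$ by standard elliptic regularity applied to $u$ and the implicit-function representation $u(x,f(x))=0$. The main obstacle in carrying this out is the comparison between the local hyperplane $\pi_p$ and $T_p\Gamma$; once the Hausdorff-distance argument pins down the angle, gluing and smoothness are routine.
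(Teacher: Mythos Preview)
Your outline matches the paper's: apply Theorem~\ref{wanglocal} to get a local $C^{1,\alpha_0}$ graph over some hyperplane $\pi_p$, then argue that $\pi_p$ cannot tilt away from $T_p\Gamma$. The paper phrases the tilt argument via energy (a nearly vertical $\pi$ would force energy concentration far from $\Gamma$), whereas you phrase it via Hausdorff distance; these are essentially equivalent here since, by Claim~1, the energy concentrates on the nodal set.

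Two small points. First, the appeal to Theorem~\ref{zerolevelset} is misplaced: that result assumes strict stability, while in Lemma~\ref{curvatureestimates} multiplicity one is a hypothesis, not a conclusion. The energy bound needed for Theorem~\ref{wanglocal} follows directly from the multiplicity-one assumption and the monotonicity formula (or Hutchinson--Tonegawa), with no recourse to Theorem~\ref{zerolevelset}. Second, the sentence ``the graph would reach points at distance $\gtrsim r_0\sin\theta$ from $\Gamma$'' is not literally correct: the plane $\pi_p$ in Theorem~\ref{wanglocal} is not canonical, and a $K_0$-Lipschitz graph over a tilted $\pi_p$ can perfectly well coincide with $\Gamma$ (take $\Gamma$ flat and $\pi_p$ at angle $\arctan K_0$). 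The robust argument is by compactness: pass to a subsequence so that $\pi_p\to\pi_\infty$ and the $C^{1,\alpha_0}$ graphs converge in $C^1$; the limit must be $\Gamma\cap B_{r_0}(p)$ by Hausdorff convergence, hence $\Gamma$ is a bounded-Lipschitz graph over $\pi_\infty$, and then the nodal sets, being $C^1$-close to $\Gamma$, are graphs over $T_p\Gamma$ with uniformly bounded Lipschitz norm. With this correction your patching and implicit-function steps go through as written.
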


For each $p\in \{u=0\}$,  \cref{wanglocal} implies that $\{u=0\}$ is a graph over some hyperplane $\pi \in T_p M$, with uniform $C^{1,\alpha}$ bounds on a ball of fixed radius. If there is a sequence of $p$ and $\pi$, converging to a hyperplane orthogonal to $\Gamma$, then the $C^{1,\alpha}$ bound would imply there is concentration of energy far from $\Gamma$, which we are assuming  does not happen. It follows that, for $\e$ small, each component of $\{u=0\}$ is a normal graph over $\Gamma$ with Lipschitz norm uniform with respect to $\e$. By the mu

\begin{claim}The second fundamental form of $\Gamma(f)$ is $O(1)$.\end{claim}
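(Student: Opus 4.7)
The strategy is a contradiction/point-picking blow-up argument in the spirit of \cite{Mantoulidis, ChodoshMantoulidis}. Suppose, toward contradiction, that $|A_{\Gamma(f)}|$ is not $O(1)$. By Claim 1 the nodal set $\{u_k = 0\}$ is a closed, smoothly embedded hypersurface in the compact manifold $M$, so $|A|$ attains its maximum at some $q_k \in \{u_k = 0\}$, and along a subsequence $\lambda_k := |A|(q_k) \to \infty$. Claim 1 additionally gives $\tilde\e_k := \e_k \lambda_k \to 0$, so we are in a regime where rescaling by $\lambda_k$ produces a sharp-interface limit.

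Concretely, define $\tilde u_k(x) := u_k(\exp_{q_k}(x/\lambda_k))$ on balls $B_{R_k}(0) \subset T_{q_k}M \cong \R^n$ with $R_k \to \infty$ (e.g.\ $R_k$ a fixed fraction of $\lambda_k$ times the injectivity radius of $M$). Then $\tilde u_k$ solves the Allen-Cahn equation with parameter $\tilde\e_k \to 0$ in a metric $g_k$ converging smoothly to the Euclidean one on compact sets, and the rescaled nodal set $\tilde N_k := \{\tilde u_k = 0\}$ satisfies $|A_{\tilde N_k}| \leq 1$ on $B_{R_k}$, with $|A_{\tilde N_k}|(0) = 1$. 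The multiplicity one hypothesis rescales to local energy bounded by $(1+o(1))\sigma_0\omega_n$ on unit balls in the rescaled picture, so Theorem \ref{wanglocal} applies at every point of $\tilde N_k$ and yields uniform $C^{1,\alpha_0}$ graph representations. Combining with the uniform curvature bound, a subsequence of $\tilde N_k$ converges smoothly on compact subsets of $\R^n$ to a complete, smooth, embedded minimal hypersurface $\Sigma_\infty$ with $|A_{\Sigma_\infty}|(0) = 1$.

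The conclusion is obtained by showing $\Sigma_\infty$ is a flat hyperplane, contradicting $|A_{\Sigma_\infty}|(0) = 1$. By Claim 2 and the multiplicity one hypothesis, near $q_k$ the set $\{u_k = 0\}$ is a smooth Lipschitz graph over $\Gamma$ accumulating on $\Gamma$ with density one, so the area ratios $r^{-(n-1)}\mathcal{H}^{n-1}(\{u_k=0\}\cap B_r(q_k))$ tend to $\omega_{n-1}$ as $k \to \infty$ and $r \to 0$. Rescaling by $\lambda_k$ gives $R^{-(n-1)} \mathcal{H}^{n-1}(\Sigma_\infty \cap B_R(0)) = \omega_{n-1}$ for every $R > 0$, i.e.\ $\Sigma_\infty$ has Euclidean density identically equal to $1$. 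The monotonicity formula for stationary integral minimal hypersurfaces, together with the pointwise lower bound $\Theta(\Sigma_\infty, p) \geq 1$ at every $p \in \Sigma_\infty$, then forces $\Sigma_\infty$ to be a hyperplane, so $|A_{\Sigma_\infty}|(0) = 0$, contradiction.

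The main obstacle will be justifying the density-at-infinity step rigorously: one must track the varifold convergence of the Allen-Cahn energy measures under the rescaling and rule out mass either escaping to infinity in the rescaled picture or accumulating with higher multiplicity at $q_k$ at the mesoscopic scale $\e_k \ll r \ll 1$. This uses both Claim 2 (giving the Lipschitz graph structure near $\Gamma$, which prevents lateral mass loss) and the multiplicity one hypothesis of the original sequence (ruling out higher-multiplicity concentration). Once this is in place, the monotonicity/rigidity step that identifies $\Sigma_\infty$ with a plane is standard.
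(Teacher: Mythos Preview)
Your overall strategy --- point-picking at the curvature maximum, rescaling by $\lambda_k$, extracting a limit, showing the limit is a plane, and contradicting $|A|(0)=1$ --- is the same as the paper's, and your density/monotonicity argument that $\Sigma_\infty$ is a plane is essentially the paper's ``from the monotonicity formula and the multiplicity one assumption, the limit varifold \dots\ has to be a plane''.

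The gap is the sentence ``Combining with the uniform curvature bound, a subsequence of $\tilde N_k$ converges \emph{smoothly} \dots\ with $|A_{\Sigma_\infty}|(0) = 1$.'' The inputs you list --- Wang's uniform $C^{1,\alpha_0}$ graph bounds together with $|A_{\tilde N_k}|\le 1$ --- give only uniform $C^{1,1}$ control on the graphing functions. This yields subsequential convergence in $C^{1,\beta}$ for every $\beta<1$, but \emph{not} $C^2$ convergence; under $C^{1,\beta}$ convergence the pointwise value of $|A|$ can drop in the limit. Since the nodal sets $\tilde N_k$ do not themselves satisfy any elliptic PDE (their mean curvature is not a priori controlled), there is no bootstrapping available from what you have written. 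Thus you cannot conclude $|A_{\Sigma_\infty}|(0)=1$, and the contradiction does not close: $\Sigma_\infty$ being a plane is perfectly consistent with $|A_{\tilde N_k}|(0)=1$ if the convergence is only $C^{1,\beta}$.

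This is precisely the point at which the paper invokes Theorem~\ref{wangwei} (Wang--Wei). In the rescaled picture the hypotheses of that theorem are met --- Lipschitz graph structure converging to a smooth hypersurface, one-dimensional $\e$-blow-ups (Claim~1), and $|A|\le 1$ --- and its conclusion is a uniform $C^{2,\alpha}$ bound on the nodal sets (equivalently, $|H|=O(\tilde\e_k)$ in $C^{0,\alpha}$). That upgrades the convergence to $C^2$, forcing $|A_{\Sigma_\infty}|(0)=1$ and giving the contradiction with planarity. Your write-up needs this step (or an independent argument to the same effect) inserted before you can pass $|A|(0)=1$ to the limit.
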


Let $p\in \{u=0\}$ be the point where the norm of the second fundamental form of the nodal set attains its maximum, which we denote by $\lambda$. From Claim 1, we have $\e\lambda\to 0$. To argue by contradiction, assume $\limsup \lambda =\infty$. Then, for any $R>0$ and $\e$ small enough, the rescalings $v(x):=u \circ \exp_p(x/\lambda)$ are solutions to $(\e\lambda)^2 \Delta v - W'(v)=0$ in $B_{R/\e}(0)\subset T_pM$ with respect to the metric $g_\lambda=\lambda^{-2}\exp_p^*(g)$. Since $( T_pM,g_\lambda)$ converges to $\R^n$ in compact sets around the origin, it follows from the monotonicity formula and the multiplicity one assumption, that the limit varifold  is a hyperplane. Restricting to $B_1(0)$, we obtain a list of solutions, with nodal set converging to a hyperplane in the Hausdorff distance. Moreover, by Claim 2, the nodal set is also a uniformly bounded Lipschitz graph with second fundamental form bounded from above by $1$. We can now apply \cref{wangwei}, which implies that the $C^{2,\alpha}$ norm of this graph is universally bounded and therefore it must converge in $C^{2,\alpha'}$ to the hyperplane, for $0<\alpha'<\alpha$ and after passing to a subsequence if necessary. This contradicts that the norm of the second fundamental form at the origin is exactly 1. Therefore, it must be that $\lambda=O(1)$.

\begin{claim}
$\|f\|_{C^{2,\alpha}(\Gamma)}+\e^{\alpha}\|f\|_{C^{2,\alpha}(\Gamma)}=O(\e).$ 
\end{claim}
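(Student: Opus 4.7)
My plan is to reduce the estimate on $f$ to a Schauder estimate for the Jacobi operator of $\Gamma$, using the mean-curvature bound from Theorem \ref{wangwei} as the driving source term.

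First I would verify that the three hypotheses of Theorem \ref{wangwei} hold on small balls centered at points of $\Gamma$. Hypothesis (i) follows from Claim 2, after converting from Fermi coordinates over $\Gamma$ to exponential coordinates at the center point. Hypothesis (ii) is a consequence of the multiplicity-one assumption together with Theorem \ref{wangentire}: the monotonicity formula forces any blow-up of $u$ at a nodal point to be an entire solution with multiplicity one at infinity, hence one-dimensional. Hypothesis (iii) is Claim 3. Applying Theorem \ref{wangwei} on a finite cover of $\Gamma$ yields
\[
\bigl|H_{\Gamma(f)}\bigr|_{C^{0}(\Gamma)} + \e^{\alpha}\,\bigl|H_{\Gamma(f)}\bigr|_{C^{0,\alpha}(\Gamma)} = O(\e),
\]
together with the a priori bound $\|f\|_{C^{2,\alpha}(\Gamma)} = O(1)$.

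Next I would translate this into a PDE for $f$. The mean curvature of the normal graph $\Gamma(f)$ over $\Gamma$, expanded in Fermi coordinates, takes the form
\[
H_{\Gamma(f)} = Jf + \mathcal{N}(f,\nabla f,\nabla^2 f),
\]
where $J = \Delta_{\Gamma}+H_0'$ is the Jacobi operator of $\Gamma$ and $\mathcal{N}$ is smooth with $\mathcal{N}(0,0,0) = 0$ and $d\mathcal{N}(0,0,0) = 0$. This yields the quadratic estimate $\|\mathcal{N}\|_{C^{0,\alpha}(\Gamma)} \leq C\,\|f\|_{C^{1,\alpha}(\Gamma)}\,\|f\|_{C^{2,\alpha}(\Gamma)}$ whenever $\|f\|_{C^{2,\alpha}}$ is bounded. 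Hausdorff convergence $\{u=0\}\to\Gamma$ forces $\|f\|_{C^0(\Gamma)} = o(1)$, and interpolating this against the a priori bound $\|f\|_{C^{2,\alpha}(\Gamma)} = O(1)$ gives $\|f\|_{C^{1,\alpha}(\Gamma)} = o(1)$.

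To close the estimate, I would use that non-degeneracy of $\Gamma$ makes $J$ invertible and yields the Schauder inequality $\|f\|_{C^{2,\alpha}(\Gamma)} \leq C\,\|Jf\|_{C^{0,\alpha}(\Gamma)}$. Writing $Jf = H_{\Gamma(f)} - \mathcal{N}$ and applying the quadratic bound on $\mathcal{N}$, for $\e$ small enough so that $C\|f\|_{C^{1,\alpha}(\Gamma)}<1/2$, the nonlinear term is absorbed on the left in
\[
\|f\|_{C^{2,\alpha}(\Gamma)} \leq C\,\|H_{\Gamma(f)}\|_{C^{0,\alpha}(\Gamma)} + C\,\|f\|_{C^{1,\alpha}(\Gamma)}\,\|f\|_{C^{2,\alpha}(\Gamma)},
\]
yielding $\|f\|_{C^{2,\alpha}(\Gamma)} = O(\|H_{\Gamma(f)}\|_{C^{0,\alpha}(\Gamma)})$. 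Combined with the Wang--Wei bound this gives $\e^{\alpha}\|f\|_{C^{2,\alpha}(\Gamma)} = O(\e)$. The companion $\|f\|_{C^{2}(\Gamma)}=O(\e)$ then follows by complementing this with $L^p$ elliptic regularity applied to $\|Jf\|_{L^{\infty}}=O(\e)$, which gives $\|f\|_{W^{2,p}(\Gamma)} = O(\e)$ for all $p$ and hence, via Sobolev embedding and interpolation with the $C^{2,\alpha}$ bound, $\|f\|_{C^{2}(\Gamma)}=O(\e)$. The main obstacle is the absorption of the nonlinear term in the Schauder estimate, which requires both invertibility of $J$ (from non-degeneracy) and smallness of $\|f\|_{C^{1,\alpha}(\Gamma)}$ (from interpolating the a priori $C^{2,\alpha}$ bound of Theorem \ref{wangwei} against the Hausdorff bound $\|f\|_{C^0(\Gamma)}=o(1)$).
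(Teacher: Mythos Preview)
Your proposal is correct and follows the same approach as the paper: apply Theorem~\ref{wangwei} (whose hypotheses are exactly Claims~1--3) to get the mean-curvature bound, then invoke invertibility of the Jacobi operator coming from non-degeneracy of $\Gamma$ to transfer this to $f$. The paper's proof compresses the second step into the single sentence ``Since the minimal hypersurface is invertible, the same bounds must hold for $f$,'' whereas you spell out the linearization $H_{\Gamma(f)}=Jf+\mathcal{N}$, the Schauder estimate, and the absorption of the quadratic remainder; your version is more complete but not a different argument.
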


Finally, we can apply \cref{wangwei} to our original sequence of solutions and conclude that its mean curvature satisfies $|H|_{C^{0}(\Gamma)}+\e^\alpha |H|_{C^{0,\alpha}(\Gamma)}=O(\e)$ and $|f|_{C^{2,\alpha}(\Gamma)}$ is bounded, where $\pi$ is a plane in $T_p\Gamma$. Note that since $\{u=0\}\to \Gamma$ in Hausdorff, by compactness we must have $|f|_{C^{2,\alpha'}(\Gamma)}\to 0$, for $0<\alpha'<\alpha$. Finally, we use that $\Gamma$ is non-degenerate. Since the Jacobi operator of $\Gamma$ is invertible, by the Inverse Function Theorem, the map $H:\mathcal{U}\subset C^{2,\alpha'}(\Gamma) \to C^{0,\alpha'}(\Gamma)$ has a continuous inverse on some small open set $\mathcal{U}$ around the constant zero function. Therefore, when $|f|_{C^{2,\alpha'}(\Gamma)}$ is small enough, we have $$|f|_{C^{2}(\Gamma)}+\e^\alpha |f|_{C^{2,\alpha'}(\Gamma)}=O(|H|_{C^{0}(\Gamma)}+\e^\alpha |H|_{C^{0,\alpha}(\Gamma)})=O(\e).$$

\end{proof}

\section{Proof of \cref{B}}\label{uniqueness}

Let $u$ be a solution of \cref{AC} converging to a non-degenerate minimal hypersurface $\Gamma$ with multiplicity one. From \cref{curvature} we know that for $\e$ sufficiently small, $\Gamma(f)=\{u=0\}$ with $\|f \|_{C^2(\Gamma)}+\e^\alpha \|f\|_{C^{2,\alpha}(\Gamma)}=O(\e).$

\begin{definition}Given $\xi \in C(\Gamma)$ with $|\xi|=o(1)$, we denote $\omega_\xi(x,t)=\omega(x,t-\xi(x)).$ Similarly, $\omega'_\xi(x,t)=\omega'(t-\xi(x))$, $\omega''_\xi(x,t)=\omega''(t-\xi(x))$ and so on. 
\end{definition}

\begin{remark}\label{localconv}Together with  \cref{wangentire}, the estimates above imply that the rescaling of both functions converges to the same one dimensional solution. More precisely, for any fixed $R>0$ $$\| u-\omega_f\|_{C_\e^2(\{|t|<\e R\})}=o(1).$$   \end{remark} 

We begin this section by looking for a perturbation of $f$ of the form $\xi=f+h$, and such that the error $\phi=u-\omega_{\xi}$ is orthogonal to the approximate kernel $\omega_\xi'$ in the following sense:
\begin{definition}
A smooth function $\phi: M\to \R$ is said to be \textit{orthogonal to the approximate kernel} $\omega'_\xi$ if for all $x\in \Gamma$, \begin{align}\label{perp}\int_\R \phi(x,t)\omega'_\xi(t)dt=0.\end{align}
\end{definition}

\begin{remark}\label{lineofarg}
As in \cite{WangWei} and \cite{ChodoshMantoulidis},  \cref{perp} allows for the following procedure. First, $L^2$ estimates for $\phi$ are obtained from \cref{linearizedapprox} and \cref{perp}. Then, these are improved to estimates of the $C^{2,\alpha}_\e$-norm, using  \cref{NashSchauder}, \cref{Schauder} and the $C^{0,\alpha}_\e$ norm of $\e^2 \Delta \phi - W''(\omega_\xi)\phi$.
\end{remark}

In this section, we carry out an argument following the lines described in \cref{lineofarg}.

\begin{proposition}\label{hapriori}
There exists $\xi \in C^\infty(\Gamma)$ such that the error $\phi=u-\omega_{\xi}$ is orthogonal to the approximate kernel $\omega'_{\xi}$. In addition, \begin{align*}
\|\phi \|_{C_\e^{2,\alpha}(M)}=o(1) \ \ \text{ and }  \ \
\|\nabla_0^k \xi \|_{C_\e^{0,\alpha}(\Gamma)}= O(\e+ \e^{1-k}\|  \phi\|_{C^{k,\alpha}_\e(M)}),\end{align*} for $k=0,1,2$ and $\alpha$ as in  \cref{curvatureestimates}.
\end{proposition}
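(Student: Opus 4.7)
The plan is to solve the orthogonality condition \eqref{perp} as a pointwise implicit equation on $\Gamma$. For each $x\in\Gamma$ and $\eta$ near $f(x)$, define the scalar
\[
F(\eta,x) := \int_\R \bigl(u(x,t) - \omega(t-\eta)\bigr)\,\omega'(t-\eta)\,dt,
\]
so that \eqref{perp} is exactly $F(\xi(x),x)=0$. A direct differentiation yields
\[
\partial_\eta F(\eta,x) = \int_\R(\omega'(t-\eta))^2\,dt - \int_\R (u-\omega_\eta)(x,t)\,\omega''(t-\eta)\,dt.
\]
By Lemma \ref{cutoffestimates}(7) the first term equals $\sigma_2\e^{-1} + o(\e^\N)$; by Remark \ref{localconv} together with the exponential decay of Corollary \ref{expocoro}, the second term is uniformly $o(\e^{-1})$ on a neighborhood of the graph of $f$. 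Hence $\partial_\eta F \ge \tfrac{\sigma_2}{2\e}$ there.

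To produce $\xi$, I would apply a pointwise IFT with initial guess $\eta=f(x)$. Since $\Gamma(f)=\{u=0\}$, Remark \ref{localconv} gives $\|u-\omega_f\|_{C^{2,\alpha}_\e(N(\e R))}=o(1)$, and Corollary \ref{expocoro} handles the complement. Multiplying by $\omega'_f$ and integrating (using Lemma \ref{fermiestimates}(3) to transfer the $C^{0,\alpha}_\e$-norm through the $t$-integral) gives $\|F(f,\cdot)\|_{C^{0,\alpha}_\e(\Gamma)} = o(1)$. The IFT then yields $\xi = f+h$ with
\[
\|h\|_{C^{0,\alpha}_\e(\Gamma)} \le \tfrac{3\e}{\sigma_2}\|F(f,\cdot)\|_{C^{0,\alpha}_\e(\Gamma)} = o(\e),
\]
and smoothness of $\xi$ from the smooth-parameter IFT. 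Combined with $\|f\|_{C^{0,\alpha}_\e}=O(\e)$ from Lemma \ref{curvatureestimates}, this establishes the $k=0$ estimate $\|\xi\|_{C^{0,\alpha}_\e(\Gamma)}=O(\e)$. For $\|\phi\|_{C^{2,\alpha}_\e(M)}=o(1)$, decompose $\phi = (u-\omega_f) + (\omega_f - \omega_\xi)$; the first piece is $o(1)$ in $C^{2,\alpha}_\e(N(\e R))$ by Remark \ref{localconv} and exponentially small outside by Corollary \ref{expocoro}, while the second piece is controlled by $|\xi-f| = o(\e)$ combined with the $C^k_\e$-norms of $\omega$ from Lemma \ref{cutoffestimates}.

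For the derivative estimates, differentiate $F(\xi(x),x)=0$ in $x^i$. The chain rule gives
\[
\partial_{x^i}\xi(x) = -\frac{\int_\R \partial_{x^i}u(x,t)\,\omega'_\xi(t)\,dt}{\partial_\eta F(\xi(x),x)} = -\frac{\e}{\sigma_2}\bigl(1+o(1)\bigr)\int_\R \partial_{x^i}u\,\omega'_\xi\,dt.
\]
Splitting $u = \omega_f + (u-\omega_f)$, the leading contribution is
\[
\int_\R \partial_{x^i}\omega_f\,\omega'_\xi\,dt = -\partial_{x^i}f\int_\R\omega'_f\omega'_\xi\,dt = -\tfrac{\sigma_2}{\e}\partial_{x^i}f\,(1+o(1)),
\]
producing a term of size $\partial_{x^i}f = O(\e)$; the remainder $\int \partial_{x^i}(u-\omega_f)\omega'_\xi\,dt$ is bounded, after rewriting $u-\omega_f = \phi + (\omega_\xi-\omega_f)$ and absorbing the $(\omega_\xi-\omega_f)$ part into $\xi-f$, by a multiple of $\e^{-1}\|\phi\|_{C^{1,\alpha}_\e}$. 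Multiplying by $\e$ gives the $k=1$ estimate. The $k=2$ estimate is obtained by one more differentiation: the extra $\partial_{x^j}$ falls either on $u$ (iterating the previous argument) or on $\omega'_\xi$ (producing an $\omega''_\xi \sim \e^{-2}$, hence the additional $\e^{-1}$ factor appearing in the bound), with the Hölder norms carried via Lemma \ref{fermiestimates}.

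The principal difficulty will be the Hölder-seminorm bookkeeping in the last paragraph: differentiating the implicit identity repeatedly while keeping $\phi$ (rather than the auxiliary $u-\omega_f$) on the right-hand side requires iteratively absorbing $\xi-f$ corrections of the form $\omega_\xi - \omega_f$, each time paying a factor $\e^{-1}$ from a derivative of $\omega$ but gaining a factor $\e$ from the smallness of $\xi-f$; the net scaling $\e^{1-k}$ emerges only once these cancellations are tracked carefully against the support properties of $\omega'_\xi$ recorded in Lemma \ref{cutoffestimates}.
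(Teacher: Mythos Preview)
Your construction of $\xi$ via a pointwise implicit function argument is correct and matches the paper exactly: the paper defines the same functional (up to a harmless factor of $\e$), checks $F(f,\cdot)=o(\e)$ and $\partial_\eta F\sim\sigma_2/\e$, and invokes the IFT to find $h=\xi-f$ with $\|h\|_{C^0}=o(\e)$. Your argument for $\|\phi\|_{C^{2,\alpha}_\e}=o(1)$ is also the same as the paper's.

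The gap is in your derivative estimates. Differentiating $F(\xi(x),x)=0$ and then substituting $u=\omega_\xi+\phi$ is \emph{tautological}: writing $\partial_{x^i}u=-\partial_{x^i}\xi\,\omega'_\xi+\partial_{x^i}\phi$ and inserting into your chain-rule identity yields
\[
\partial_{x^i}\xi\bigl[\partial_\eta F-\textstyle\int(\omega'_\xi)^2\bigr]=-\int\partial_{x^i}\phi\,\omega'_\xi,
\]
and the bracket equals $-\int\phi\,\omega''_\xi=o(\e^{-1})$, which is \emph{small}, not bounded below. Your proposed detour through $u-\omega_f=\phi+(\omega_\xi-\omega_f)$ does not help: the $(\omega_\xi-\omega_f)$ piece contributes exactly $-\partial_{x^i}h\int(\omega'_\xi)^2$ to the remainder, and when you ``absorb'' this you recover the same degenerate equation. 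So the claimed bound $O(\e^{-1}\|\phi\|_{C^{1,\alpha}_\e})$ on the remainder is unjustified; it secretly contains $\partial_{x^i}h$ with a nondegenerate coefficient.

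The paper sidesteps this circularity by using a \emph{different} pointwise relation to control $h$: since $\Gamma(f)=\{u=0\}$, one has $0=u(x,f(x))=\omega(-h(x))+\phi(x,f(x))$. This expresses $h$ directly as (a smooth function of) $\phi$ evaluated on the nodal set, with no reference back to $\xi$. Differentiating this identity $k$ times and using $|\nabla_0^k f|=O(\e)$ from Lemma~\ref{curvatureestimates} immediately gives $\|\nabla_0^k h\|_{C^{0,\alpha}_\e}=O(\e^{1-k}\|\phi\|_{C^{k,\alpha}_\e})$, from which the stated estimate on $\xi=f+h$ follows.
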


\begin{proof}[Proof of  \cref{hapriori}] 

Let $U=\{h\in C(\Gamma): |h|<\tau/2\}$ and $F$ be the map $F:U \to C(\Gamma)$, given by \begin{align*}F(h)(x):&=\e \int_\R [u(x,t)-\omega_{f+h}(x,t)]\omega'_{f+h}(x,t)dt.\end{align*}
From \cref{localconv} and  \cref{cutoffestimates}-(6) we obtain

\setcounter{claim}{0}

 \begin{claim} $F(0)=o(\e).$ \end{claim}

Similarly, we can estimate $|DF(h)|$ from below when $\|h\|_{C(\Gamma)}$ is small. Denote by $B(f,r)\subset C(\Gamma)$, the ball of radius $r>0$ centered at $f\in C(\Gamma)$, with respect to the supremum norm. Let $r=o(\e)$ and $h \in B(0, r)$.

\begin{claim}
For $\e$ small enough, $DF(h)(v)=c v, \forall v\in C(\Gamma),$ where $c=c(h)\geq \sigma_2/2$. In particular, $B(F(0),\frac{c}{2}r)\subset F(B(0,r))$. \end{claim}

Indeed, from \cref{cutoffestimates} (6) and (7) we get \begin{align*}DF(h)(v) &=\frac{d}{ds}F(h+sv)|_{s=0}\\
&=v\cdot  \e \bigg[  \int_\R (\omega'_{f+h})^2   -\int_\R [u-\omega_{f+h}]\omega''_{f+h}     \bigg]\\
&=v \cdot [\sigma_1 + o(\e^\N) +o (1)],
\end{align*}
which implies the claim for $\e$ small enough.

\begin{claim}
There exists $h \in C^\infty(\Gamma)$, satisfying $\|h\|_{C(\Gamma)}=o(\e)$ and $F(h)\equiv 0$.
\end{claim} 

To see this, choose $r=o(\e)$ such that $F(0)=o(r)$, e.g. $r=\sqrt{\e F(0)}$. Since $F(0)=o(\e)$, the last claim implies $0\in B(F(0),\frac{c}{2}r)$, for $\e$ sufficiently small. Therefore, $0\in F(B(0,r))$.

\begin{claim}
Let $\xi=f+h$, where $h$ is as in the previous claim. Then, $|\nabla_0^k \xi |=o(\e^{1-k}),$ for $k=1,2,3$. In particular, $\|\phi\|_{C^{k}_\e(M)}=o(1),$ for $k=1,2,3$.
\end{claim}

Now that we have guaranteed the existence of $h$ with $\|h\|_{C(\Gamma)}=o(\e)$, its smoothness follows from applying the Implicit Function Theorem and the nondegeneracy of $DF(h)$ to the function $\tilde F :\Gamma \times (-\e,\e) \to \R$ given by $\tilde F(x,h):=F(h)(x)$. Then, the estimate $|\nabla_0^k \xi |=o(\e^{1-k}),$ for $k=1,2,3$, follows by recursively differentiating $F(h)(x)=0$ with respect to $\nabla_0^k$ and estimating the norm of the result, each time using \cref{cutoffestimates}.

Finally, we have to argue for $\|\phi\|_{C^{k}_\e(M)}=o(1)$. From  \cref{expocoro}, for every $\rho>0$ we can choose $R=O(\e)$ such that $\|\phi\|_{C^{k}_\e(M\setminus \{|t|<R\})}<\rho$, for $\e$ small enough. In exponential coordinates on points of $\{|t|<R\}$, the function $\omega_\xi$ rescales as $\psi(t-\tilde \xi(x))$, where $\tilde \xi(x)=\xi(\e x)/\e$. From the previous estimate it follows that $\nabla_0^k \tilde \xi =o(1)$, for $k=1,2,3$. This implies $\psi(t-\tilde \xi(x))$ converges to the canonical solution in all $C^k$ norms. The same is true for $u$ from \cref{localconv} and putting both estimates together we conclude that for any $\rho>0$, there is an $\e_0>0$ such that $\|\phi\|_{C^{k}_\e(M)}\leq \rho$ for $k=1,2,3$ and $\e\in(0,\e_0)$.

\begin{claim}
$\|\nabla_0^k h \|_{C^{0,\alpha}_\e(\Gamma)}=O(\e^{1-k})\| \phi\|_{C^{2,\alpha}_\e(M)},$ for  $k=1,2$ and $\alpha\in[0,1)$.
\end{claim}

The estimates $\|\nabla_0^k h \|_{C^{0,\alpha}_\e(\Gamma)}=O(\e^{1-k})\| \phi\|_{C^{2,\alpha}_\e(M)}$ are obtained recursively by differentiating $0=u(x,f(x))=\omega_{f+h}(-h(x))+\phi(x,f(x))$. 

\end{proof}


Directly from the definition of $\phi$ and \cref{AC}, using the notation for Fermi coordinates from  \cref{Fermicoordinates} we can write the following equation for $\phi$ and $\xi$

\begin{align}\label{phiequation}
\begin{split}
\e^2 \Delta_0 \phi +\ell_0(\phi)+ E_1 &=\e^2\Delta_g\phi - W''(\omega_\xi)\phi\\
& = \phi^2(2\omega_\xi+u) + \e^2 J[\xi]\omega'_\xi + E_2 \\
\end{split}
\end{align}
where \begin{itemize}
\item $\ell_0(\phi)=\e^2\phi''-W''(\omega_\xi)\phi$,\\
\item $J[\xi]=\Delta_0 \xi -H'_0\xi$ is the Jacobi operator of $\Gamma$,\\
\item $E_1=-\e^2 H_t \phi'+\e^2 (\Delta_t-\Delta_0)\phi$,\\
\item $E_2=\e^2\omega_\xi'''|\nabla_t \xi|^2+\e^2 [ (\Delta_0-\Delta_t)\xi] \omega'_\xi+\e^2 H'_0 (t-\xi) \omega'_\xi + \e^2  \mathcal{R} t^2 \omega'_\xi$ and\\ \item $\mathcal{R}=\mathcal{R}(x,t)=\int_{0}^{1} H''(x,t\cdot s) (s-1)ds.$\end{itemize}

\begin{remark}
Although this equation involves both $\phi$ and $\xi$, it follows from  \cref{hapriori} that the right hand side of all the estimates can be presented in terms of norms of $\phi$. This is what we do in the rest of this section. 
\end{remark}

First, we compute the estimates for $J[\xi].$

\begin{proposition}\label{Jschauder}
$\|\e J[\xi]\|_{C^{0,\alpha}_\e(M)}=O(\e^3+\e\|\phi \|_{C^{2,\alpha}_\e(M)}+\|\phi \|^2_{C^{2,\alpha}_\e(M)})$
\end{proposition}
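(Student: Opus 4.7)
The plan is to test equation \eqref{phiequation} against $\omega'_\xi$ by integrating in $t$ at each $x \in \Gamma$. The term $\e^2 J[\xi]\omega'_\xi$ produces $\e^2 J[\xi]\int_\R (\omega'_\xi)^2\, dt = \e\sigma_2 J[\xi] + o(\e^\N) J[\xi]$ by Lemma \ref{cutoffestimates}(7); this isolates $\e J[\xi]$, and the $o(\e^\N) J[\xi]$ remainder is absorbed on the left at the end. The rest of the argument is to bound every other tested term in $C^{0,\alpha}_\e(\Gamma)$ by $O(\e^3 + \e \|\phi\|_{C^{2,\alpha}_\e} + \|\phi\|^2_{C^{2,\alpha}_\e})$, relying throughout on the orthogonality $\int_\R \phi \omega'_\xi \, dt = 0$ from Proposition \ref{hapriori} and on the pointwise bounds on $\xi$ given there.

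For the left-hand side I would differentiate the orthogonality relation twice on $\Gamma$ to obtain $\int_\R (\Delta_0 \phi)\omega'_\xi \, dt = -2\int \nabla_0 \phi \cdot \nabla_0 \omega'_\xi\, dt - \int \phi \Delta_0 \omega'_\xi \, dt$; since $\nabla_0 \omega'_\xi = -\omega''_\xi \nabla_0 \xi$ and similarly for $\Delta_0\omega'_\xi$, both remaining integrals carry factors of $\nabla_0\xi$ or $\nabla_0^2\xi$ controlled via Proposition \ref{hapriori}, and Lemma \ref{cutoffestimates} controls the $\omega$-derivative integrals, producing $O(\e\|\phi\| + \|\phi\|^2)$ after the outer $\e^2$. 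A double integration by parts in $t$ rewrites $\int_\R \ell_0(\phi)\omega'_\xi \, dt = \int_\R \phi\, \ell_\xi(\omega'_\xi)\, dt$; in the shifted variable $s = t-\xi(x)$, $\ell_\xi(\omega'_\xi) = \partial_s Q(\omega)$, which is $o(\e^\N)$ by the cutoff construction, so this contribution is negligible. For $\int E_1 \omega'_\xi \, dt$, integrating the $H_t \phi'$ piece by parts in $t$ and using $H_t = t H_0' + t^2 \mathcal R$ (valid since $H_0 = 0$) makes the leading $H_0' \int \phi \omega'_\xi \, dt$ vanish by orthogonality, with the remaining $t$-factor absorbed using Lemma \ref{cutoffestimates}(6); the $(\Delta_t - \Delta_0)\phi$ piece gains a factor of $t$ from Lemma \ref{fermiestimates}(4), similarly yielding $O(\e\|\phi\|)$.

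For the right-hand side, the $\phi^2(2\omega_\xi + u)$ term gives $O(\|\phi\|^2)$ directly, since $|2\omega_\xi + u| \leq 3$ and $\int |\omega'_\xi|\, dt = O(1)$. The four summands of $\int E_2 \omega'_\xi \, dt$ each exploit a distinct cancellation: (i) the $|\nabla_t \xi|^2$ piece, via $\omega''_\xi \omega'_\xi = \tfrac{1}{2}\partial_t(\omega'_\xi)^2$ and integration by parts in $t$, loses its leading contribution so that only the $t$-variation of the metric survives, giving $O(\e(\e + \|\phi\|)^2)$; (ii) the $H'_0(t-\xi)\omega'_\xi$ piece integrates to $H'_0 \int_\R s\,(\omega'(s))^2\, ds = 0$ by odd symmetry; (iii) the $(\Delta_0 - \Delta_t)\xi\, \omega'_\xi$ piece gains a factor of $t$ via Lemma \ref{fermiestimates}(4) and, combined with $\int |t|(\omega'_\xi)^2 dt = O(1)$, produces $O(\e^3 + \e\|\phi\|)$; (iv) the $\mathcal R t^2 \omega'_\xi$ piece uses $\int_\R t^2 (\omega'_\xi)^2\, dt = O(\e)$ from Lemma \ref{cutoffestimates}(6), giving $O(\e^3)$. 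The $C^{0,\alpha}_\e(\Gamma)$ regularity of each integral is handled using Lemma \ref{fermiestimates}(3) together with the shift estimate Lemma \ref{fermiestimates}(2).

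The main obstacle I anticipate is step (i) of the $E_2$ analysis: without the identity $\omega''_\xi \omega'_\xi = \tfrac12 \partial_t (\omega'_\xi)^2$ and the resulting integration by parts, a naive bound on the $|\nabla_t \xi|^2$ contribution is of order $O(\e)$, which would be fatal for the stated estimate. Once that cancellation and the orthogonality of $\phi$ against $\omega'_\xi$ are in hand, everything else is bookkeeping with Lemmas \ref{cutoffestimates} and \ref{fermiestimates} and the bounds on $\xi$ from Proposition \ref{hapriori}.
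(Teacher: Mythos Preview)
Your proposal is correct and follows essentially the same route as the paper: project \eqref{phiequation} onto $\omega'_\xi$ by integrating in $t$, isolate $\e J[\xi]$ via $\int(\omega'_\xi)^2 = \sigma_2/\e + o(\e^\N)$, and then control every remaining term using the orthogonality $\int \phi\,\omega'_\xi = 0$, the factor of $t$ in $\Delta_t - \Delta_0$ and in $H_t$, and a symmetry cancellation for the $|\nabla_t\xi|^2$ piece. The only tactical differences are that the paper bounds $\int \e^2 H_t\phi'\omega'_\xi$ directly rather than integrating by parts, and handles the $|\nabla_t\xi|^2$ term by parity of $\omega''\omega'$ rather than by your identity $\omega''_\xi\omega'_\xi = \tfrac12\partial_t(\omega'_\xi)^2$; both pairs of arguments are equivalent and yield the same bounds.
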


\begin{proof}

We project onto $\Gamma$ by integrating against $\omega'_\xi$ along every vertical direction. 

\

For the first term, notice that from \cref{Ortho0} we have the expression, 
\begin{align*}
\bigg\|\int_\R \e^2 (\Delta_0 \phi)  \omega'_\xi \bigg\|_{C^{0,\alpha}_\e(\Gamma)}
&=\bigg\|-2\e^2 \nabla_0\xi  \int_\R (\nabla_0\phi)   \omega''_\xi-\e^2|\nabla_0 \xi|^2\int_\R  \phi  \omega'''_\xi + \e^2 \Delta_0 \xi  \int_\R \phi  \omega''_\xi \bigg\|_{C^{0,\alpha}_\e(\Gamma)} \\
&=O(\|\nabla_0\xi\|_{C^{0,\alpha}_\e(M)}+ \e\|\nabla^2_0\xi\|_{C^{0,\alpha}_\e(M)}) \|\phi\|_{C^{2,\alpha}_\e(M)}.
\end{align*}

For the second term, by the formula in the introduction, we have
\begin{align*}
\bigg\| \int_\R \ell (\phi )\omega'_\xi \bigg\|_{C^{0,\alpha}_\e(\Gamma)} &=\bigg\| \int_\R [\ell_0(\phi) +o(\e^\N)]\omega'_\xi \bigg\|_{C^{0,\alpha}_\e(\Gamma)}= o(\e^\N).
\end{align*}

Next,
\begin{align*}
\begin{split}
\bigg\|\int_\R \e^2 H_t\phi'\omega'_\xi \bigg\|_{C^{0,\alpha}_\e(\Gamma)}&=O(\e^2)\|H_t \|_{C^{1}_\e(M)}\|\phi' \|_{C^{1}_\e(M)}=O(\e\|\phi\|_{C^2_\e(M)})
\end{split}
\end{align*}

Now, from the Fermi Coordinates section, we know that 
\begin{align*}
 \bigg\|\int_\R [\e^2(\Delta_t-\Delta_0) \phi(\cdot,t)] \omega'_\xi  \bigg\|_{C^{0,\alpha}_\e(\Gamma)}&=O(1) \int_\R \| \e^2(\Delta_{t+\xi}-\Delta_0) \phi(\cdot,t+\xi)\|_{C^{0,\alpha}_\e(\Gamma)} |\omega'| \\
&=O(1) \|\phi \|_{C^{2,\alpha}_\e(M)} \int_\R (|t|+\e ) |\omega'|\\
&=O(\e) \|\phi \|_{C^{2,\alpha}_\e(M)}.
 \end{align*}

Since $\|u\|_{C^{2,\alpha}_\e(M)}+\|\omega_\xi\|_{C^{2,\alpha}_\e(M)}=O(1),$ we have
\begin{align*}
\bigg\| \int_\R \phi^2(2\omega_\xi+u)\omega'_\xi \bigg\|_{C^{0,\alpha}_\e(\Gamma)} &=O( \|\phi \|^2_{C^{1}_\e} )
\end{align*}

Similarly, 
\begin{align*}
\bigg\|\int_\R \e^2 J[\xi] (\omega'_\xi)^2 \bigg\|_{C^{0,\alpha}_\e(\Gamma)}&=( \e \sigma_{2} + o(\e^\N)) \|J[\xi]\|_{C^{0,\alpha}_\e(\Gamma)}
\end{align*}

It remains to estimate the error $E_2$,

First, since $\omega'''$ is an odd function
\begin{align*}
\bigg\|\int_\R \e^2 |\nabla_t \xi|^2 \omega'''_\xi\bigg\|_{C^{0,\alpha}_\e(\Gamma)}&=
\bigg\|\int_\R\e^2( |\nabla_t \xi|^2 - |\nabla_0 \xi|^2 ) \omega'''_\xi\bigg\|_{C^{0,\alpha}_\e(\Gamma)}
\\
&=
\int_\R \e^2 \| |\nabla_{t+\xi} \xi|^2 - |\nabla_0 \xi|^2 \|_{C^{0,\alpha}_\e(\Gamma)}
 |\omega'''|\\
 &=\e^2\|\nabla_0\xi\|^2_{C^{0,\alpha}_\e(\Gamma)}\int_\R (|t|+\e)|\omega'''|\\
 &=O(\e)\|\nabla_0\xi\|^2_{C^{0,\alpha}_\e(\Gamma)}
\end{align*}

Next, we have 
\begin{align*}
\bigg\|\int_\R (\e^2 (\Delta_{t}-\Delta_0) \xi) (\omega'_\xi)^2 \bigg\|_{C^{0,\alpha}_\e(\Gamma)}&=O(1) \int_\R \|\e^2(\Delta_{t+\xi}-\Delta_0) \xi\|_{C^{0,\alpha}_\e(\Gamma)} (\omega_\xi')^2   \\
&=  O(\|\nabla_0 \xi \|_{C^{0,\alpha}_\e(\Gamma)}+\|\nabla_0^2 \xi \|_{C^{0,\alpha}_\e(\Gamma)}) \int_\R (|t|+\e)  \times (\e \omega_\xi')^2   \\
&=O(\e^2) (\|\nabla_0 \xi \|_{C^{0,\alpha}_\e(\Gamma)}+\|\nabla_0^2 \xi \|_{C^{0,\alpha}_\e(\Gamma)}).
\end{align*}

Since $t(\omega')^2$ is an odd function,
\begin{align*}
\bigg\|\int_\R \e^2 H'_0 (t-\xi)  (\omega'_\xi)^2  \bigg\|_{C^{0,\alpha}_\e(\Gamma)}&=\bigg\|\e^2 H'_0 \int_\R  t ( \omega')^2  \bigg\|_{C^{0,\alpha}_\e(\Gamma)}=0.
\end{align*}

Finally, since $\|t+\xi \|_{C^{0,\alpha}_\e(M)}=O(|t|+\|\xi\|_{C^{0,\alpha}_\e(M)})=O(|t|+\e)$, we have
\begin{align*}
\bigg\|\int_\R \e^2 \mathcal{R} t^2 ( \omega'_\xi)^2 \bigg\|_{C^{0,\alpha}_\e(\Gamma)}&= O(\e^2) \|\mathcal{R} \|_{C^{0,\alpha}_\e(M)} \int_\R (|t|^2+\e^2) (\omega')^2 \\
&= O(\e^2) \int_\R (|t/\e|^2+1) (\e \omega')^2 \\
&=O(\e^3).
\end{align*}

Combining all the estimates, we obtain
\begin{align}\label{J-estimate}\begin{split}
\|\e J[\xi]\|_{C^{0,\alpha}_\e(\Gamma)}&= O(\|\nabla_0\xi\|_{C^{0,\alpha}_\e(M)})(
\e^2+\e\|\nabla_0\xi\|_{C^{0,\alpha}_\e(M)}+\|\phi\|_{C^{2,\alpha}_\e(M)}) \\
&+ O(\|\nabla^2_0\xi\|_{C^{0,\alpha}_\e(M)}) (\e^2+\e \|\phi\|_{C^{2,\alpha}_\e(M)})\\ 
&+ O(\e^3+\e \|\phi \|_{C^{2,\alpha}_\e(M)}+ \|\phi \|^2_{C^{2,\alpha}_\e(M)})\\
\end{split}\end{align}

Finally, substituting $\|\nabla_0^k \xi \|_{C^{0,\alpha}_\e(\Gamma)}=O(\e+\e^{1-k}\|\phi\|_{C^{2,\alpha}_\e(M)})$ into \cref{J-estimate} we get
\begin{align*}
\|\e J[\xi]\|_{C^{0,\alpha}_\e(\Gamma)}&= O(\e^3+\e\|\phi \|_{C^{2,\alpha}_\e(M)}+\|\phi \|^2_{C^{2,\alpha}_\e(M)}).
\end{align*}
\end{proof}

Now we compute estimates for the value of the approximated linearized operator at $\phi$.

\begin{proposition}\label{linearizedschauder}
$\|\e^2 \Delta_g \phi -W''(\omega_\xi)\phi  \|_{C^{0,\alpha}_\e(M)}=O(\e^2+\| \phi\|^2_{C^{2,\alpha}_\e(M)}).$
\end{proposition}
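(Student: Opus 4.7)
The proof estimates each of the three terms on the right-hand side of equation \eqref{phiequation},
\begin{equation*}
\e^2\Delta_g\phi-W''(\omega_\xi)\phi=\phi^2(2\omega_\xi+u)+\e^2 J[\xi]\omega'_\xi+E_2,
\end{equation*}
separately in the $C^{0,\alpha}_\e(M)$ norm. The quadratic term is immediate: since $u$ solves \eqref{AC} with $\|u\|_{L^\infty}\leq 1+o(1)$ and $\|\omega_\xi\|_{C^{0,\alpha}_\e(M)}=O(1)$ by Lemma \ref{cutoffestimates}(2), multiplicativity of the norm yields $\|\phi^2(2\omega_\xi+u)\|_{C^{0,\alpha}_\e}=O(\|\phi\|^2_{C^{2,\alpha}_\e})$. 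For the Jacobi projection term, I combine Proposition \ref{Jschauder} --- giving $\|\e J[\xi]\|_{C^{0,\alpha}_\e(\Gamma)}=O(\e^3+\e\|\phi\|_{C^{2,\alpha}_\e}+\|\phi\|^2_{C^{2,\alpha}_\e})$ --- with the bound $\|\e\omega'_\xi\|_{C^{0,\alpha}_\e(M)}=O(1)$ from Lemma \ref{cutoffestimates}(2), to obtain $O(\e^3+\e\|\phi\|+\|\phi\|^2)$; Young's inequality $\e\|\phi\|\leq\tfrac12(\e^2+\|\phi\|^2)$ then reduces this to the claimed form.

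The bulk of the work is in estimating the four components of $E_2$, and the governing principle is that every factor of $t$ or $t-\xi$ must be paired with its companion derivative $\omega^{(k)}_\xi$ before one begins to estimate. Upon rescaling by $s=(t-\xi)/\e$ the composite becomes a smooth bounded profile in $s$, courtesy of exponential decay of $\psi^{(k)}$, so that the rescaled norms stay bounded rather than exhibiting the naive $\e^{-k}$ blowup. In particular $\|\e^2\omega^{(k)}_\xi\|_{C^{0,\alpha}_\e(M)}=O(1)$ for the relevant $k$, $\|t\omega'_\xi\|_{C^{0,\alpha}_\e}=O(1)$ and $\|t^2\omega'_\xi\|_{C^{0,\alpha}_\e}=O(\e)$. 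Paired with the a priori bounds $\|\nabla_t\xi\|^2_{C^{0,\alpha}_\e}=O(\e^2+\|\phi\|^2)$ and $\|\e^2\nabla_0^2\xi\|_{C^{0,\alpha}_\e(\Gamma)}=O(\e^3+\e\|\phi\|)$ deduced from Proposition \ref{hapriori}, the factorization $\e^2(\Delta_0-\Delta_t)\xi=t\cdot\widetilde P$ of Lemma \ref{fermiestimates}(4) (which yields $\|\widetilde P\|_{C^{0,\alpha}_\e(M)}=O(\e^3+\e\|\phi\|)$), and $\|\e^2\mathcal R\|_{C^{0,\alpha}_\e}=O(\e^2)$, each component of $E_2$ is bounded by $O(\e^2+\|\phi\|^2)$ after one more application of Young's inequality to absorb residual $\e\|\phi\|$ cross terms.

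The main obstacle is to avoid the spurious factor $\e^{-1}$ that appears if one naively applies the product inequality $\|fg\|_{C^{0,\alpha}_\e}\leq\|f\|_{C^{0,\alpha}_\e}\|g\|_{C^{0,\alpha}_\e}$ separately to $\omega^{(k)}_\xi$ and its companion $t$-polynomial: a pointwise factor $\omega^{(k)}_\xi\sim\e^{-k}$ cannot be compensated by the smallness of $|\nabla\xi|^2$ or of $\e^2(\Delta_t-\Delta_0)\xi$ on its own. Respecting the one-dimensional profile structure --- which arises naturally once one derives $E_2$ from the Laplace--Beltrami decomposition of $\Delta_g\omega_\xi$ in Fermi coordinates --- ensures that each pointwise $\e^{-k}$ is cancelled by the accompanying $\e^k$ coming either from $t^k$ or from the $\e$-smallness of $\nabla\xi$, producing the desired bound.
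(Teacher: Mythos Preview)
Your proof is correct and follows the same route as the paper's: decompose via \eqref{phiequation}, bound the quadratic term trivially, invoke Proposition~\ref{Jschauder} for the Jacobi term (absorbing $\e\|\phi\|$ by Young), and estimate each piece of $E_2$ by pairing the $t$-factors with the profile $\omega^{(k)}_\xi$ while feeding in the $\nabla_0^k\xi$ bounds from Proposition~\ref{hapriori}. One point worth making explicit: the claim $\|\tilde P\|_{C^{0,\alpha}_\e}=O(\e^3+\e\|\phi\|)$ does not follow from the \emph{statement} of Lemma~\ref{fermiestimates}(4), which only gives $O(\|\xi\|_{C^{2,\alpha}_\e})=O(\e)$ (the zeroth-order term $\|\xi\|_{C^0}=O(\e)$ dominates); it follows from the \emph{proof} of that lemma, which shows $\tilde P=\e^2 A_{ij}\partial_{ij}\xi+\e^2 B_i\partial_i\xi$ contains only derivatives of $\xi$ --- exactly what the paper exploits as well.
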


\begin{proof}
Remember 
\begin{align}
\begin{split}
\e^2 \Delta_g \phi - W''(\omega)\phi & = \phi^2(2\omega_\xi+u) + \e J[\xi]\dot \omega_\xi + E_2 \\
\end{split}
\end{align}
where \begin{itemize}\item $J[\xi]=\Delta_0 \xi -H'_0\xi$ is the Jacobi operator of $\Gamma$\\ \item $E_2=W'(\omega_\xi)|\nabla_t \xi|^2 +\e^2 [ (\Delta_t-\Delta_0)\xi+H'_0 (t-\xi) + t^2 \mathcal{R} ] \omega'_\xi$ and\\ \item $\mathcal{R}=\mathcal{R}(x,t)=\int_{0}^{1} H''(x,ts) (s-1)ds.$\end{itemize}

\

We estimate each term of $\|E_2 \|_{C^{0,\alpha}_\e(M)}$  separately
\begin{align*}\| W'(\omega_\xi) |\nabla_t \xi|^2\|_{C^{0,\alpha}_\e(M)}&= O(\|\nabla_0 \xi\|^2_{C^{0,\alpha}_\e(M)})\\
&=O(\e+\|\phi\|_{C^{1}_\e(M)})^2\\
&=O(\e^2+\|\phi\|^2_{C^{2,\alpha}_\e(M)}).
\end{align*}

\begin{align*}\| \e^2 (\Delta_t-\Delta_0)\xi \omega'_\xi \|_{C^{0,\alpha}_\e(M)}&=O(\e^2 \|\nabla^2_0\xi \|_{C^{0,\alpha}_\e(M)}+\e^2 \|\nabla_0\xi \|_{C^{0,\alpha}_\e(M)})\| t \omega'_\xi(x,t) \|_{C^{0,\alpha}_\e(M)}\\
&=O(\e^2 \|\nabla^2_0\xi \|_{C^{0,\alpha}_\e(M)}+\e^2 \|\nabla_0\xi \|_{C^{0,\alpha}_\e(M)})\|( t/\e) \e \omega'_\xi(x,t) \|_{C^{0,\alpha}_\e(M)}\\
&=O(\e^2 \|\nabla^2_0\xi \|_{C^{0,\alpha}_\e(M)}+\e^2 \|\nabla_0\xi \|_{C^{0,\alpha}_\e(M)})\\
&=O(\e^2)(\e+\e^{-1}\|\phi\|_{C^{2,\alpha}_\e(M)})\\
&=O(\e^3+\e\|\phi\|_{C^{2,\alpha}_\e(M)}).
\end{align*}

\begin{align*}
\|\e^2 H'_0 (t-\xi) \omega'_\xi\|_{C^{0,\alpha}_\e(M)}&=O(\e^2) \|H'_0\|_{C^{0,\alpha}_\e(M)}\|(t-\xi) \omega'_\xi\|_{C^{0,\alpha}_\e(M)}=O(\e^2).
\end{align*}

\begin{align*}
\| \e^2 t^2 \mathcal{R} \omega'_\xi\|_{C^{0,\alpha}_\e(M)}&=O(\e^2)\|\mathcal{R} \|_{C^{0,\alpha}_\e(M)}\| t^2 \omega'_\xi\|_{C^{0,\alpha}_\e(M)}=O(\e^3).
\end{align*}

Collecting all these estimates with the ones for $J[\xi]$ from the last section, we conclude 
\begin{align*}
\| \e^2 \Delta_g \phi - W''(\omega)\phi \|_{C^{0,\alpha}_\e(M)}& = \|\phi^2\|_{C^{0,\alpha}_\e(M)} + \|\e J[\xi]\|_{C^{0,\alpha}_\e(M)} + \|E_1\|_{C^{0,\alpha}_\e(M)}\\
&=O(\e^2+\|\phi\|_{C^{2,\alpha}_\e(M)}^2)
\end{align*}

\end{proof}


The following three lemmas estimate the $L^\infty$ norm of $\phi$.

\begin{lemma}[$L^\infty$-norm estimate far from $\Gamma$]
There exist positive constants $\sigma$, $R_0$ such that, for all $R\geq R_0$, $$\|\phi\|_{L^\infty(M\setminus \{|t|<\e R\})}=O(e^{-\sigma R}\|\phi\|_{C^{2,\alpha}_\e(M)} + \e^2+\|\phi\|^2_{C^{2,\alpha}_\e(M)}).$$ 
\end{lemma}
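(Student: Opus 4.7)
\medskip

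\noindent\textbf{Proof proposal.} The plan is to treat this as a coercive linear problem for $\phi$ in the exterior region $\Omega_{R_0}:=M\setminus N(\e R_0)$ and apply the exponential decay Lemma \ref{exponentialdecaylemma} to $\pm\phi$. The first step is to fix $R_0$ large enough that the potential $W''(\omega_\xi)$ is uniformly coercive on $\Omega_{R_0}$. Recall $W''(s)=3s^2-1$ and, by Lemma \ref{cutoffestimates}(3) and the estimates \eqref{onedimest}, $|\omega_\xi(x,t)-\sgn(t-\xi(x))|=O(e^{-\sigma |t-\xi(x)|/\e})$. For any point with $\operatorname{dist}(\cdot,\Gamma)\geq \e R_0$ we have $|t-\xi|\geq \e(R_0-O(1))$, so taking $R_0$ sufficiently large yields $|\omega_\xi|\geq 1/2$ throughout $\Omega_{R_0}$, and hence $W''(\omega_\xi)\geq c_0>0$ uniformly on $\Omega_{R_0}$ (with $c_0$ depending only on the choice of $R_0$).

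Next, rewrite equation \eqref{phiequation} as
\begin{equation*}
\e^2\Delta_g\phi - W''(\omega_\xi)\phi = F,
\end{equation*}
where $F=\phi^2(2\omega_\xi+u)+\e^2 J[\xi]\omega'_\xi+E_2$. By Proposition \ref{linearizedschauder} we have the pointwise bound
\begin{equation*}
\|F\|_{L^\infty(M)}\leq \|F\|_{C^{0,\alpha}_\e(M)}=O\bigl(\e^2+\|\phi\|_{C^{2,\alpha}_\e(M)}^2\bigr).
\end{equation*}

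Now apply Lemma \ref{exponentialdecaylemma} separately to $v=\phi$ and $v=-\phi$ on the domain $\Omega_{R_0}$, with $a=\|F\|_{L^\infty(M)}$ and $\rho$ a fixed positive constant (bounded below by the geometry of $N(\tau)$ independently of $\e$). For any $p\in M\setminus N(\e R)$ with $R\geq R_0$, one has $\operatorname{dist}_{\partial\Omega_{R_0}}(p)\geq \e(R-R_0)$, so the lemma yields
\begin{equation*}
|\phi(p)|\leq C\|\phi\|_{L^\infty(\partial N(\e R_0))}\,\max\{e^{-\sigma(R-R_0)},\,e^{-\sigma\rho/\e}\}+\frac{\|F\|_{L^\infty(M)}}{c_0}.
\end{equation*}
Since $\rho$ is bounded below uniformly, $e^{-\sigma\rho/\e}=o(\e^{\N})$ and can be absorbed. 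Also $\|\phi\|_{L^\infty(\partial N(\e R_0))}\leq\|\phi\|_{C^{2,\alpha}_\e(M)}$, so rescaling the constant to swallow the factor $e^{\sigma R_0}$ gives
\begin{equation*}
\|\phi\|_{L^\infty(M\setminus N(\e R))}\leq C e^{-\sigma R}\|\phi\|_{C^{2,\alpha}_\e(M)}+C\bigl(\e^2+\|\phi\|_{C^{2,\alpha}_\e(M)}^2\bigr),
\end{equation*}
which is the claim.

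The only subtle point, and the main obstacle, is arranging the uniform coercivity $W''(\omega_\xi)\geq c_0$ on $\Omega_{R_0}$; but this follows from the exponential decay of $\psi-\sgn$ and the cutoff estimates in Lemma \ref{cutoffestimates} by taking $R_0$ large enough. Everything else is a direct application of Lemma \ref{exponentialdecaylemma} together with the already-established bound for the right-hand side in Proposition \ref{linearizedschauder}.
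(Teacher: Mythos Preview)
Your proposal is correct and follows essentially the same strategy as the paper: establish uniform coercivity $W''(\omega_\xi)\geq c_0>0$ outside a tubular neighborhood of width $O(\e)$, invoke the bound $\|\e^2\Delta_g\phi-W''(\omega_\xi)\phi\|_{L^\infty}=O(\e^2+\|\phi\|_{C^{2,\alpha}_\e}^2)$ from Proposition~\ref{linearizedschauder}, and then apply the exponential decay Lemma~\ref{exponentialdecaylemma} to $\pm\phi$. The only cosmetic difference is that you fix the domain $\Omega_{R_0}=M\setminus N(\e R_0)$ once and read off the estimate at distance $\e(R-R_0)$ from its boundary, whereas the paper lets the domain $M\setminus N(\e R/2)$ vary with $R$; both yield the same exponential factor after absorbing constants.
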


\begin{proof}
Since $|\xi|=O(\e)$, there are positive constants  $R_0$ and $\e_0$, such that $W''(\omega_\xi)>1$ on $M\setminus \{|t|<\e R/2\}$ for $\e\in (0,\e_0)$. In particular, we can apply  \cref{exponentialdecaylemma} with $\|\e^2\Delta_g \phi-W''(\omega_\xi)\phi\|_{C^{2,\alpha}_\e(M)}=O(\e^2+\|\phi\|^2_{C^{2,\alpha}_\e(M)}),$ $\rho=\tau-\e R$, $\Omega=M\setminus \{|t|<\e R/2\}$, $c=W''(\omega_\xi)$, $c_0=1$ and $a=O(\e^2+\|\phi\|^2_{C^{2,\alpha}_\e(M)})$. In this way $$|\phi(p)|=O(\|\phi\|_{L^\infty(\partial \{|t|<\e R\}}\times \max\{e^{-\sigma \dist(p,\partial \{|t|<\e R/2\})/\e}, e^{-\sigma(\tau/\e- R/2)} \} + \e^2+\|\phi\|^2_{C^{2,\alpha}_\e(M)}).$$ Finally, when $p=(x,t)\in M \setminus \{|t|<\e R\}$ we obtain the desired estimate. \end{proof}

\begin{lemma}[$L^2$-norm estimate near $\Gamma$] For any fixed $R>0$, we have
 $$\e^{-n/2} \sup_{p\in M_{\e R}}\|\phi \|_{L^2(B(p,\e L))}=O\bigg(\e^2 + \|\phi\|_{C^{2,\alpha}_\e(M)}^2 + e^{-\sigma R} \|\phi\|_{C^{2,\alpha}_\e(M)} \bigg).$$
\end{lemma}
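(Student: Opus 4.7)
The idea is to transfer the one-dimensional spectral gap of Lemma \ref{linearizedapprox} into an ambient $L^{2}$ estimate by integrating slicewise in Fermi coordinates, exploiting the orthogonality $\int_{\R}\phi(x,t)\,\omega'_\xi(x,t)\,dt = 0$ secured by Proposition \ref{hapriori}. Fix $p\in M_{\e R}$ and let $q$ be its footpoint on $\Gamma$. For each $x$ near $q$, apply Lemma \ref{linearizedapprox} to the slice $\phi(x,\cdot)$ (with $\omega$ replaced by its $\xi(x)$-translate): the orthogonality annihilates the middle term and produces the pointwise bound
\[
\tfrac{\gamma}{2}\int_{\R}\phi(x,t)^{2}\,dt \;\le\; -\int_{\R}\phi(x,t)\,\ell_\xi\phi(x,t)\,dt \;+\; o(\e^{\N})\|\phi(x,\cdot)\|_{L^{\infty}},
\]
where $\ell_\xi\phi := \e^{2}\partial_t^{2}\phi - W''(\omega_\xi)\phi$.

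To connect $\ell_\xi\phi$ to the ambient equation, I would use the Fermi decomposition $\Delta_g=\partial_t^{2}+\Delta_t-H_t\partial_t$ to rewrite $\ell_\xi\phi = F - \e^{2}\Delta_t\phi + \e^{2}H_t\partial_t\phi$, where $F:=\e^{2}\Delta_g\phi-W''(\omega_\xi)\phi$ and $\|F\|_{C^{0,\alpha}_\e(M)}=O(\e^{2}+\|\phi\|^{2}_{C^{2,\alpha}_\e(M)})$ by Proposition \ref{linearizedschauder}. Multiply the slicewise bound by $\eta(x)^{2}\chi(t)^{2}$, where $\eta$ is a tangential cutoff equal to $1$ on $B^{\Gamma}(q,\e L)$ and supported in $B^{\Gamma}(q,C\e L)$, and $\chi$ is a vertical cutoff supported in $(-\e R,\e R)$; then integrate over $\Gamma\times\R$. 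The $\Delta_t$-term, integrated by parts in the tangential direction, contributes the favorable $\e^{2}\int\eta^{2}\chi^{2}|\nabla_t\phi|^{2}\ge 0$ plus $\nabla\eta$-commutators absorbable by Young's inequality; the $\phi F$ integral is bounded by Cauchy--Schwarz via $\|F\|_{L^{2}(B(p,C\e L))}\|\phi\|_{L^{2}(B(p,C\e L))} \lesssim \e^{n/2}(\e^{2}+\|\phi\|^{2}_{C^{2,\alpha}_\e(M)})\|\phi\|_{L^{2}}$, giving a linear factor of $\|\phi\|_{L^{2}}$ to divide out; the $H_t\partial_t\phi$-term is similarly absorbable via Lemma \ref{fermiestimates} and Young.

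The remaining contributions come from the cutoffs. The vertical truncation destroys the exact orthogonality of $\chi\phi$ to $\omega'_\xi$: the defect $|\int_{\R}(1-\chi^{2})\phi\,\omega'_\xi|$ is controlled, via the exponential decay of $\omega'_\xi$, by $\|\phi\|_{L^{\infty}(|t|\ge\e R/2)}\cdot O(e^{-\sigma R})$. The preceding $L^\infty$-estimate far from $\Gamma$ then bounds this by $O\bigl((e^{-\sigma R}\|\phi\|_{C^{2,\alpha}_\e(M)}+\e^{2}+\|\phi\|^{2}_{C^{2,\alpha}_\e(M)})\,e^{-\sigma R}\bigr)$; substituting this back into the middle term of Lemma \ref{linearizedapprox} is precisely where the $e^{-\sigma R}\|\phi\|_{C^{2,\alpha}_\e(M)}$ piece of the final estimate originates, after passing from $\|\phi\|^{2}_{L^{2}}$ to $\|\phi\|_{L^{2}}$ by square roots and dividing by $\e^{n/2}$. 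The main obstacle is exactly this bookkeeping of the truncation: one has to arrange the absorptions so that the $L^{2}$ bound comes out \emph{linear} (not quadratic) in $\|\phi\|_{C^{2,\alpha}_\e(M)}$ with the correct exponential factor, which forces one to couple the spectral gap carefully with Young's inequality and to use that $[\int\phi\,\omega'_\xi]^{2}$ enters the spectral gap squared, so the $e^{-\sigma R}$ survives square-rooting while the $L^\infty$-norm contribution becomes the stated linear term.
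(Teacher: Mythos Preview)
Your overall strategy is sound and would reach the stated estimate, but it is a genuinely different route from the paper's. The paper does \emph{not} introduce a tangential cutoff or integrate over $\Gamma$. Instead it treats the slicewise $L^{2}$-norm
\[
V(x)\;=\;\int_{-2\e R}^{2\e R}\phi(x,t)^{2}\,dt
\]
as a function on the closed manifold $\Gamma$ and derives a differential inequality for it. Using \eqref{phiequation} and the spectral gap of Lemma~\ref{linearizedapprox} exactly as you outline (including the same treatment of the vertical truncation, which is where the $e^{-\sigma R}\|\phi\|_{C^{2,\alpha}_\e}$ term enters), one obtains
\[
\tfrac{\e^{2}}{2}\Delta_{0}V-\tfrac{\gamma}{4}V \;\ge\; -\,O\!\bigl(\e^{5}+\e\|\phi\|_{C^{2,\alpha}_\e}^{4}+\e^{3}\|\phi\|_{C^{2,\alpha}_\e}^{2}+\e\,e^{-2\sigma R}\|\phi\|_{C^{2,\alpha}_\e}^{2}\bigr).
\]
Since $\Gamma$ is compact without boundary, the maximum principle applied to this subsolution inequality bounds $\sup_{x\in\Gamma}V(x)$ directly by the right-hand side; Fubini then converts this into the local $L^{2}$ bound on balls of radius $\e R$.

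The advantage of the paper's approach is that it exploits the closedness of $\Gamma$ to avoid the tangential cutoff $\eta$ altogether. Your scheme must instead absorb the $\nabla\eta$-commutators, which at scale $\e L$ produce terms of size $O(L^{-2})\int_{\text{annulus}}\phi^{2}$; closing this requires taking the supremum over centers $p$ and choosing $L$ large relative to $\gamma^{-1/2}$, a covering step you gloss over with ``absorbable by Young's inequality''. That step is doable but is exactly the bookkeeping the maximum-principle argument dispenses with. Conversely, your energy-method route would transplant more readily to a setting where $\Gamma$ is noncompact or has boundary, where the paper's global maximum-principle argument is unavailable. Your handling of the vertical truncation and the $\phi F$, $H_t\partial_t\phi$ terms matches the paper's in spirit; note only that the ``favorable'' $\Delta_t$-contribution you describe is favorable because it is $\le 0$ on the right-hand side (equivalently, it becomes $+\e^{2}\int\eta^{2}\chi^{2}|\nabla_t\phi|^{2}\ge 0$ after moving to the left), a minor sign slip in your write-up.
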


\begin{proof}
We start by computing the equation satisfied by the $L^2$-norm along vertical directions of $\Gamma$, i.e. $V(x)=\int_{J}|\phi(x,t)|^2 dt=\|\phi(x,\cdot)\|^2_{L^2(J)}$, where $J=[-2R\e,2R\e]$.

From the equation satisfied by $\phi$, derived above, we get
\begin{align*}
\begin{split}
\frac{\e^2}{2}\Delta_0 V &=   \int_{J} \phi (\e^2 \Delta_0 \phi) +  \int_{J}\e^2 | \nabla_0  \phi|^2\\
&\geq -\int_{J} \phi  \ell(\phi)+ \int_{J} \phi(E_2-E_1)+O(\phi^3)+ \e^2 J[\xi]\omega'_\xi\phi\\
&\geq -\int_{J} \phi  \ell(\phi)- \frac{\gamma}{4}\int_{J} \phi^2 + \frac{4}{\gamma}\int_J E_1^2+E_2^2+\e^4 (J[\xi]\omega'_\xi)^2.
\end{split}
\end{align*}

Define $\tilde\phi=\phi\rho$, where $\supp \rho \subset [-\tau,\tau]$ and $\rho\equiv1$ on $[-\tau/2,\tau/2]$. Notice we have $\int_\R \tilde\phi\omega'=\int_\R \phi\omega'=0$. Therefore, by the Lemma in the cutoff section we have
\begin{align*}
-\int_{J} \phi  \ell(\phi) &= -\int_{J} \tilde \phi  \ell(\tilde \phi) \\
&= -\int_{\R} \tilde \phi  \ell(\tilde \phi) + \int_{|t|\geq \e R} \tilde \phi  \ell(\tilde \phi)  \\
&\geq \frac{\gamma}{2} \int_\R \tilde\phi^2 + o(\e^\N)- \|\phi\|_{C^{2}_\e(M)} \int_{|t|\geq \e 2R} |\phi| \rho \\
&\geq \frac{\gamma}{2} V_0 + o(\e^\N)- \|\phi\|^2_{C^{2}_\e(M)} \int_{|t|\geq \e 2R} c e^{-\sigma t/\e} dt \\
&\geq \frac{\gamma}{2} V_0 + O(\e e^{-\sigma 2R}\|\phi\|^2_{C^{2}_\e(M)}) + o(\e^\N).\end{align*}

From the estimates we computed before we have 
\begin{align*}
|E_2|+|\e^2 J[\xi]\omega'_\xi|=|E_2|+|\e J[\xi]|=O(\e^2+\|\phi\|^2_{C^{2,\alpha}_\e(M)}),
\end{align*}
while, for $|E_1|$, since $H=0$, we have 
\begin{align*}
|E_1|=O(|\e^2 H_t \phi'|+|\e^2 (\Delta_t-\Delta_0) \phi|)=O(\|\phi\|_{C^{2,\alpha}_\e(M)})\times |t|,
\end{align*}
Since $|J|=2\e R$, for any fixed $R$ it follows $$\int_{-\e 2R}^{\e 2R} |E_1|^2+|E_2|^2+|\e^2 J[\xi]\omega'|^2=O(\e^5 + \e\|\phi\|_{C^{2,\alpha}_\e(M)}^4 + \e^3 \|\phi\|_{C^{2,\alpha}_\e(M)}^2).$$

Combining all the estimates the inequality for $V_0$ reads,
\begin{align*}
\begin{split}
\frac{\e^2}{2}\Delta_0 V - \frac{\gamma}{4}V_0 \geq  O(\e^5 + \e\|\phi\|_{C^{2,\alpha}_\e(M)}^4 + \e^3 \|\phi\|_{C^{2,\alpha}_\e(M)}^2 + \e e^{-\sigma 2 R}\|\phi\|^2_{C^{2}_\e(M)}).
\end{split}
\end{align*}

From the maximum principle we have 
 $$\|\phi(x,\cdot) \|^2_{L^2(-\e 2 R,\e 2 R)}=O(\e^5 + \e\|\phi\|_{C^{2,\alpha}_\e(M)}^4 + \e^3 \|\phi\|_{C^{2,\alpha}_\e(M)}^2 + \e e^{-\sigma 2 R}\|\phi\|^2_{C^{2}_\e(M)}).$$

Finally, notice that since $R>0$ is fixed, for $p=(x,t)$ with $t=O(\e)$ we have  
\begin{align*}
\|\phi \|^2_{L^2(B(p,\e R))}&=O(\e^{n-1})\times \|\phi(x,\cdot) \|^2_{L^2(-\e 2R,\e2 R)} \\
&=O(\e^{4+n} + \e^{n}\|\phi\|_{C^{2,\alpha}_\e(M)}^4 + \e^{2+n} \|\phi\|_{C^{2,\alpha}_\e(M)}^2 + \e^n e^{-\sigma 2 R}\|\phi\|^2_{C^{2}_\e(M)}),
\end{align*}
from which the estimate follows. 
\end{proof}

\begin{lemma}[$L^\infty$-estimate near $\Gamma$]
For any fixed $R>0$ and, we have $$\sup_{p\in M_{\e R}}\|\phi\|_{R^\infty(B(p,\e R/2))} = O\bigg(\e^2 + \|\phi\|_{C^{2,\alpha}_\e(M)}^2 + e^{-\sigma R} \|\phi\|_{C^{2,\alpha}_\e(M)} \bigg).$$
\end{lemma}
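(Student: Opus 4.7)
The plan is to obtain the $L^\infty$ bound as a direct consequence of interior elliptic regularity (Theorem \ref{NashSchauder}), combined with the $L^2$ estimate from the previous lemma and the $C^{0,\alpha}_\e$ control on the right-hand side of the linearized equation coming from Proposition \ref{linearizedschauder}. The function $\phi$ satisfies
\begin{equation*}
\e^2 \Delta_g \phi - W''(\omega_\xi)\phi = g,
\end{equation*}
where $c(x) := W''(\omega_\xi(x))$ is uniformly bounded in $L^\infty$ (since $|\omega_\xi| \leq 1$) and
\begin{equation*}
\|g\|_{L^\infty(M)} \leq \|g\|_{C^{0,\alpha}_\e(M)} = O\bigl(\e^2 + \|\phi\|^2_{C^{2,\alpha}_\e(M)}\bigr).
\end{equation*}

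The key step is to apply Theorem \ref{NashSchauder} on the pair $\Omega = B(p, \e R)$ and $\Omega' = B(p, \e R/2)$. Here $d' = \dist(\Omega', \partial\Omega) = \e R/2$, so $\e^{-1} d' = R/2$ is a fixed constant independent of $\e$; likewise $K = \|W''(\omega_\xi)\|_{L^\infty}$ is uniform. Hence the constant $C$ in Theorem \ref{NashSchauder} may be chosen uniformly in $\e$, yielding
\begin{equation*}
\|\phi\|_{L^\infty(B(p,\e R/2))} \leq \|\phi\|_{C^{0,\alpha}_\e(B(p,\e R/2))} \leq C\bigl(\e^{-n/2}\|\phi\|_{L^2(B(p,\e R))} + \|g\|_{L^\infty(B(p,\e R))}\bigr).
\end{equation*}
Taking the supremum over $p \in M_{\e R}$ and invoking the previous $L^2$ lemma (applied with $L$ chosen at least equal to $R$, so that the $L^2$ ball contains $B(p,\e R)$) gives
\begin{equation*}
\e^{-n/2}\|\phi\|_{L^2(B(p,\e R))} = O\bigl(\e^2 + \|\phi\|^2_{C^{2,\alpha}_\e(M)} + e^{-\sigma R}\|\phi\|_{C^{2,\alpha}_\e(M)}\bigr),
\end{equation*}
and the $L^\infty$ bound on $g$ above absorbs into this same expression, producing the desired estimate.

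There is no real obstacle beyond bookkeeping: one needs to verify that the Nash–De Giorgi constant in Theorem \ref{NashSchauder} is indeed uniform as $\e \to 0$, which follows because the only geometric input that scales is the ratio $\e^{-1}d' = R/2$, which stays fixed, while $\mathrm{Vol}(\Omega) = O(\e^n)$ appears in the rescaled estimate in a way already tailored to the norms $C^{0,\alpha}_\e$ (this is exactly the purpose of the rescaled H\"older norms of Definition \ref{cke}). Everything else is plugging Proposition \ref{linearizedschauder} and the preceding $L^2$ lemma into the standard De Giorgi–Nash–Moser iteration packaged in Theorem \ref{NashSchauder}.
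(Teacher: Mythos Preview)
Your proof is correct and follows exactly the same approach as the paper: the paper's proof is a single sentence saying the result is an immediate consequence of Theorem \ref{NashSchauder} combined with the $L^2$ estimate from the preceding lemma and the bound on $\|\e^2\Delta_g\phi - W''(\omega_\xi)\phi\|_{L^\infty(M)}$. Your additional remarks about the uniformity of the constants in Theorem \ref{NashSchauder} (via the fixed ratio $\e^{-1}d' = R/2$) are a useful elaboration but do not alter the strategy.
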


\begin{proof}
The proof is an immediate consequence of \cref{NashSchauder} and the estimates for $\|\phi\|_{R^2(B(p,\e R))}$ and $\|\e^2 \Delta_g \phi-W''(\omega_\xi)\phi \|_{L^\infty(M)}$. 
\end{proof}

\

Finally, we obtain the main technical result of this section.

\begin{corollary}\label{coroltech}
$\|\xi\|_{C^{2}(\Gamma)}+ \e^\alpha\|\xi\|_{C^{2,\alpha}(\Gamma)}+\|\phi\|_{C^{2,\alpha}_\e(M)} = O(\e^2 + \|\phi\|_{C^{2,\alpha}_\e(M)}^2).$
\end{corollary}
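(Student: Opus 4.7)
The plan is to close a Schauder-type loop that upgrades the three preceding pointwise lemmas into a full $C^{2,\alpha}_\e$ bound for $\phi$, and then to feed the resulting control on $\phi$ into the Jacobi estimate of Proposition \ref{Jschauder} to extract the estimate for $\xi$. Everything hinges on a single tunable parameter: the radius $R$ appearing in the near/far $L^\infty$ lemmas.

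First I combine the last two lemmas. For any fixed $R$ sufficiently large, the $L^\infty$-estimate near $\Gamma$ covers the tubular piece $N(\e R)$ and the exponential decay lemma covers its complement, yielding a global bound
\[\|\phi\|_{L^\infty(M)} = O\!\left(\e^2 + \|\phi\|^2_{C^{2,\alpha}_\e(M)} + e^{-\sigma R}\|\phi\|_{C^{2,\alpha}_\e(M)}\right).\]
Next, applying the Schauder estimate of Theorem \ref{Schauder} to the linear equation $\e^2\Delta_g \phi - W''(\omega_\xi)\phi = G$, whose right-hand side is controlled in $C^{0,\alpha}_\e$ by Proposition \ref{linearizedschauder}, gives
\[\|\phi\|_{C^{2,\alpha}_\e(M)} = O\!\left(\|\phi\|_{L^\infty(M)} + \e^2 + \|\phi\|^2_{C^{2,\alpha}_\e(M)}\right).\]
Substituting the previous display and fixing $R$ large enough that the resulting constant in front of $\|\phi\|_{C^{2,\alpha}_\e(M)}$ is less than $1/2$, I absorb the corresponding term to the left-hand side and obtain $\|\phi\|_{C^{2,\alpha}_\e(M)} = O(\e^2 + \|\phi\|^2_{C^{2,\alpha}_\e(M)})$.

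For the bound on $\xi$, I return to Proposition \ref{Jschauder}, which gives
\[\|\e J[\xi]\|_{C^{0,\alpha}_\e(\Gamma)} = O\!\left(\e^3 + \e\|\phi\|_{C^{2,\alpha}_\e(M)} + \|\phi\|^2_{C^{2,\alpha}_\e(M)}\right).\]
Since $\Gamma$ is non-degenerate, the Jacobi operator $J = \Delta_0 - H'_0$ is invertible on $C^{2,\alpha}(\Gamma)$, and classical Schauder on the compact base $\Gamma$ yields $\|\xi\|_{C^{2,\alpha}(\Gamma)} \leq C\|J[\xi]\|_{C^{0,\alpha}(\Gamma)}$. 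Converting between the rescaled norm $\|\cdot\|_{C^{0,\alpha}_\e(\Gamma)}$ and the standard Hölder norm costs at most a factor of $\e^{-\alpha}$ on the Hölder seminorm; combining this with the $\phi$-estimate just established produces
\[\|\xi\|_{C^{2}(\Gamma)} + \e^\alpha\|\xi\|_{C^{2,\alpha}(\Gamma)} = O\!\left(\e^2 + \|\phi\|^2_{C^{2,\alpha}_\e(M)}\right).\]

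The main obstacle is arranging the absorption. The $L^\infty$-near-$\Gamma$ bound is only useful once $R$ has been taken large, but $R$ must remain fixed as $\e \to 0$, which is why the approximate orthogonality \eqref{perp} (exploited in the $L^2$-near-$\Gamma$ step via Lemma \ref{linearizedapprox}) is essential: it removes the kernel direction in which the linearized operator fails to be coercive, and is what keeps the Schauder constants $\e$-independent so that the loop can be closed.
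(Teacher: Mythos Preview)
Your proof is correct and follows essentially the same route as the paper: combine the near and far $L^\infty$ lemmas into a global bound, feed this into Schauder together with Proposition \ref{linearizedschauder}, absorb the $e^{-\sigma R}\|\phi\|_{C^{2,\alpha}_\e(M)}$ term by fixing $R$ large, and then invoke Proposition \ref{Jschauder} with the invertibility of $J$ for the $\xi$-estimate. Your added remark about the $\e^{-\alpha}$ conversion between rescaled and unrescaled H\"older norms on $\Gamma$ makes explicit a detail the paper leaves implicit, but the argument is otherwise the same.
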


\begin{proof}
Combining both estimates we have the existence of $R_0$, such that for any fixed $R>R_0$, we have $$\|\phi\|_{L^\infty(M)} = O\bigg(\e^2 + \|\phi\|_{C^{2,\alpha}_\e(M)}^2 + e^{-\sigma R} \|\phi\|_{C^{2,\alpha}_\e(M)} \bigg).$$ Moreover, by the estimates from \cref{linearizedschauder} and  \cref{Schauder}, we are able to bound the $C^{2,\alpha}_\e(M)$-norm of $\phi$. We conclude by choosing $R$ big enough, which allow us to absorb the term $e^{-\sigma R} \|\phi\|_{C^{2,\alpha}_\e(M)}$ on the lefthand side. This proves the bound for $\phi$. From \cref{Jschauder} and the invertibility of $J$ it follows that $\|\xi\|_{C^{2}(\Gamma)}+ \e^\alpha\|\xi\|_{C^{2,\alpha}(\Gamma)} = O(\e^2 + \|\phi\|_{C^{2,\alpha}_\e(M)}^2).$ 
\end{proof}

The estimates obtained for the perturbation $\xi$ and the error $\phi$ are in Fermi coordinates with respect to $\Gamma$, and have the same order of homogeneity, with respect to $\e$, as in \cite{Pacard}. The only concern is that the error term is presented in a different format. More precisely, we have found $\phi$ so that $\int_\R \phi(x,t)\omega'(t-\xi(x))dt=0$. Denote by $D_\xi$ a diffeomorphism which in Fermi coordinates corresponds to $D_\xi(x,t)=(x,t+\xi(x))$ for $(x,t)\in \{|t|<\tau/2\}$ and that interpolates smoothly to the identity in $M\setminus \{|t|<\tau\}$ as $|t|\to \tau$. Let $v=\phi\circ D_\xi$. Then, $\int_\R v\omega '=0$. If we define $v^\sharp=[\chi_1 v]_0^\perp=\chi_1v-\frac{\int (\chi_1 v)\psi'(s/\e)ds}{\int \psi'(s/\e)^2ds}\psi'(t/\e)$ and $v^\flat=v-\chi_4 v^\sharp$ we get functions analogue to those in \cite{Pacard}, where $\chi_i$, for $i$ are defined in Section 3.5.1 of \cite{Pacard}. Pacard obtained his solutions by means of a contraction mapping argument, which implies the uniqueness of solutions with these asymptotics. This is explained in the paragraph after the proof of Lemma 3.9 from \cite{Pacard} (see also \cref{rem3} below). This concludes the proof of \cref{B}

\begin{remark}\label{rem3}
 In Lemma 3.9 from \cite{Pacard}, the bound for the auxiliary function $v^\flat$ is given in terms of a norm which is penalized near the hypersurface (see Remark 3.2 from \cite{Pacard} for the definition). More precisely, the lemma requires the bound $\|\chi_5 v^\flat \|_{C^{2,\alpha}_\e}(M)=O(\e^4)$. Using that $\int v\omega'=0$, and substituting the definitions of $v$, $[\cdot]_0^\perp$ and $\omega'$ into the formula of $v^\flat$, we get \begin{equation*}
     \begin{split}
        v^\flat=v(1-\chi_4\chi_1)+\chi_4\frac{\psi'(t/\e)}{\int \psi'(s/\e)^2 ds} \times \int v[(\chi_1-\chi)\psi'(s/\e)+\e(\psi(s/\e)-\operatorname{sgn})\chi']ds. 
     \end{split}
 \end{equation*} 
Note that $\chi_5(1-\chi_4\chi_1)=0$, so in order to estimate $\chi_5 v^\flat$ we only need to worry about the second term. On one hand, the derivatives of both $\chi_4\frac{\psi'(t/\e)}{\int \psi'(s/\e)^2 ds}$ and $v$, are bounded by polynomials on $\e^{-1}$. On the other hand, the functions $(\chi_1-\chi)\psi'(s/\e)$ and $\e(\psi(s/\e)-\operatorname{sgn})\chi'$ are both $o(\e^\N)$, this follows from \cref{onedimest} since the supports of $\chi_1-\chi$ and $\chi'$ are far from the hypersurface. Combining both facts, we see that the norms of $\chi_5 v^\flat$ decay exponentially on $\e$.
\end{remark}



\setcounter{tocdepth}{1}
\bibliographystyle{siam}
\bibliography{references}

\end{document}